\newtheorem{theorem}{Theorem}[section]
\newtheorem{lemma}[theorem]{Lemma}
\theoremstyle{definition}
\newtheorem{definition}[theorem]{Definition}
\newtheorem{proposition}[theorem]{Proposition}
\theoremstyle{remark}
\newtheorem{remark}[theorem]{Remark}
\numberwithin{equation}{section}
\newcommand{\abs}[1]{\lvert#1\rvert}
\newcommand{\norm}[1]{\lVert#1\rVert}
\begin{document}

\title[Compressible MHD system]
{Well-posedness for compressible MHD system with highly oscillating initial data}
\author[J. Jia]{Junxiong Jia}
\address{Department of Mathematics,
Xi'an Jiaotong University,
 Xi'an
710049, China;Beijing Center for Mathematics and Information Interdisciplinary Sciences (BCMIIS);}
\email{jjx323@mail.xjtu.edu.cn}

\author[J. Peng]{Jigen Peng}
\address{Department of Mathematics,
Xi'an Jiaotong University,
 Xi'an
710049, China;Beijing Center for Mathematics and Information Interdisciplinary Sciences (BCMIIS);}
\email{jgpengg@mail.xjtu.edu.cn}

\author[J. Gao]{Jinghuai Gao}
\address{School of Electronic and Information Engineering,
Xi'an Jiaotong University,
 Xi'an
710049, China;}
\email{jhgao@mail.xjtu.edu.cn}



\subjclass[2010]{35Q35, 76N10, 76W05s}



\keywords{Compressible MHD System, Lagrange Coordinates, Global Well-Posedness, Highly Oscillating Initial Data}

\begin{abstract}
In this paper, we transform compressible MHD system written in Euler coordinate to Lagrange coordinate in critical Besov space.
Then we construct unique local solutions for compressible MHD system.
Our results improve the range of Lebesgue exponent in Besov space from $[2, N)$ to $[2, 2N)$ with $N$ stands for dimension.
In addition, we give a lower bound for the maximal existence time which is important for our construction of global solutions.
Based on the local solution, we obtain a unique global solution with high oscillating initial velocity and density by
using effective viscous flux and Hoff's energy methods to explore the structure of compressible MHD system.
\end{abstract}

\maketitle

\section{Introduction}

Magnetohydrodynamics (MHD) is concerned with the interaction between
fluid flow and magnetic field. The governing equations of isentropic compressible MHD system has the form
\begin{align}\label{origianl MHD}
\begin{split}
\begin{cases}
\partial_{t}\rho + \mathrm{div} (\rho u) = 0,     \\
\partial_{t}(\rho u) + \mathrm{div} (\rho u \otimes u) + \nabla P(\rho)     \\
\quad\quad\quad\quad\quad\quad\quad\quad\quad
 = B\cdot\nabla B - \frac{1}{2} \nabla (|B|^{2}) + \mu \Delta u + (\lambda + \mu) \nabla\mathrm{div} u,  \\
\partial_{t}B + (\mathrm{div} u)B + u\cdot\nabla B - B\cdot\nabla u = \nu \Delta B,   \\
\mathrm{div} B = 0,   \\
(\rho, u, B)|_{t=0} = (\rho_{0}, u_{0}, B_{0}), \quad \mathrm{div} B_{0}=0,
\end{cases}
\end{split}
\end{align}
where $\rho = \rho(t,x)$ denotes the density, $x \in \mathbb{R}^{N}$, $t > 0$, $u \in  \mathbb{R}^{N}$ is the velocity of the flow,
and $B \in \mathbb{R}^{N}$ stands for the magnetic field. The constants $\mu$ and $\lambda$ are the shear and bulk viscosity coefficients of the flow,
respectively, satisfying $\mu > 0$ and $\lambda + 2\mu > 0$, the constant $\nu > 0$ is the magnetic diffusivity acting as a magnetic diffusion coefficient
of the magnetic field. $P(\rho)$ is the scalar pressure function satisfying $P'(\bar{\rho}) > 0$ where $\bar{\rho}$ is the equilibrium
state of the density.

MHD system has been studied by many authors for its physical importance and mathematical challenges.
X. Hu and D. Wang constructed the global weak solutions with large initial data in a series of papers \cite{huxian1,huxian2,huxian3}.
In addition, they also studied the important low march number limit problem in their papers.
In 2012, A. Suen and D. Hoff \cite{mhd2012hoff} extends small energy weak solution theory about Navier-Stokes system
to MHD case. Then global weak solutions with initial data contain vacuum also be constructed by many authors in \cite{vacuum1,vacuum2}.

Besides the studies about weak solutions mentioned above, there are also a lot of investigations about classical and strong solutions.
In 1984, S. Kawashima studied a very general symmetric hyperbolic-parabolic system
which incorporate MHD system as a special one in his PhD thesis\cite{jia1}.
He constructed the global solution for MHD system when the initial data is small in $H^{4}(\mathbb{R}^{3})$.
After S. Kawashima's investigation, recently, many authors \cite{mhd1,mhd2,mhd3,mhd4} studied the global well-posedness and optimal time decay rate
for the MHD system with the magnetic equilibrium state to be zero.
Their studies get the optimal time decay rate for the derivatives of the solution which generalize the optimal
time decay rate results for Navier-Stokes equations to the more complex MHD equations.

R. Danchin did a seminal work in \cite{danchin NS} in 2000. By scaling analysis R. Danchin gave the definitions about critical space for compressible
Navier-Stokes equations, then under this new framework he constructed a global solution when the initial data to be small in the critical Besov space.
After this important work, there are a lot of further studies \cite{NavierStokesLp,NavierStokesLpDanchin,danchin full ns}.
Especially, C. Wang, W. Wang and Z. Zhang \cite{NS2013Zhifei} construct a global solution for isentropic Navier-Stokes system
which allow highly oscillating initial density and velocity in 2014.
In \cite{localBesov1,localBesov2}, D. Bian, B. Yuan and B. Guo, firstly, constructed the local solution for compressible MHD equations in critical Besov space framework.
In \cite{mhd2011Chengchun}, C. Hao analyzed the linearized hyperbolic-parabolic system related to MHD equations carefully,
then he constructed the global solution in critical Besov space framework.
Recently, there are some important works for incompressible MHD equations \cite{inmhd1,inmhd2,inmhd3}.
In these excellent papers, F. Lin, L. Xu and P. Zhang constructed global solutions for incompressible MHD equations without magnetic diffusion by
cutely using the dissipative structure of the system and the anisotropic Besov space technique.
Then, X. Hu in paper \cite{huxian4} constructed the global solutions for compressible MHD equations without magnetic diffusion.
Due to the structure is very different for incompressible and compressible MHD equations, X. Hu's work is very different
to F. Lin, L. Xu and P. Zhang's work and X. Hu used lots of structure information about the compressible MHD equations.

In this manuscript, we plan to construct global solution for MHD system with highly oscillating initial velocity and density.
For isentropic Navier-Stokes system, global solutions with highly oscillating initial velocity can be obtained
by a careful analysis about Green's matrix for the linear system \cite{NavierStokesLp,NavierStokesLpDanchin}.
In paper \cite{jiamei}, the authors studied the global well-posedness for compressible viscoelastic fluids in $L^{p}$
based critical Besov space by decomposing the whole system into three small systems with each one similar to Navier-Stokes case.
However, as mentioned by A. Suen and D. Hoff in their paper \cite{mhd2012hoff},
the presence of magnetic field effects results in a different and more intricate coupling of rates of partial
regularization in the initial layer, so we need more careful analysis.
we firstly transform compressible MHD system written in Euler coordinate into Lagrange coordinate in critical Besov space
which is not appeared before. Then based on Lagrange coordinate form, we construct the local solution.
Comparing to the known local results \cite{localBesov1,localBesov2}, we improve the uniqueness range for $p$ from
$[2, N)$ to $[2, 2N)$ where $N$ stands for dimension and we also give a lower bound for the maximal existence time
which is important for our global result.
Based on the local well-posedness, we obtain a global solution by using effective viscous flux and Hoff's energy estimates
to explore the structure of the system.
For more explicit statements, please see Theorem \ref{local well-posedness} and Theorem \ref{mainexistence} in next section.

The other parts of this paper are organized as follows.
In section 2, we give a brief introduction about Besov space and the two main results of this paper.
In Section 3, we prove the local well-posedness in
critical Besov space framework by using Lagrange coordinate methods and prove that the local solution can
propagate the smoothness of the initial data. In Section 4, we use Hoff's energy methods to get a uniform estimates
about the solution. In Section 5, we prove a blow up criterion and then combine the results from Section 3 to complete
the proof of Theorem \ref{mainexistence}.
At last, for the reader's convenience, we collect a lot of useful Lemmas in the Appendix.


\section{Main Results and Some Preliminaries}

In this section, we will introduce some notations and give the three main results of this paper.
Firstly, let me give some basic knowledge about Besov space, which can be found in \cite{FourierPDE}.
The homogeneous Littlewood-Paley decomposition relies upon a dyadic partition of unity. We can use for instance any $\phi \in C^{\infty}(\mathbb{R}^{N})$,
supported in $\mathcal{C} := \{\xi \in \mathbb{R}^{N}, 3/4 \leq \abs{\xi} \leq 8/3 \}$ such that
\begin{align*}
\sum_{q \in \mathbb{Z}} \phi(2^{-q} \xi) = 1 \quad \mathrm{if} \quad \xi \neq 0.
\end{align*}
Denote $h = \mathcal{F}^{-1} \phi$, we then define the dyadic blocks as follows
\begin{align*}
\Delta_{q} u := \phi(2^{-q}D)u = 2^{qN} \int_{\mathbb{R}^{N}} h(2^{q}y)u(x-y) \, dy, \quad \mathrm{and} \quad S_{q}u = \sum_{k\leq q-1} \Delta_{k}u.
\end{align*}
The formal decomposition
\begin{align*}
u = \sum_{q\in \mathbb{Z}} \Delta_{q} u
\end{align*}
is called homogeneous Littlewood-Paley decomposition.
The above dyadic decomposition has nice properties of quasi-orthogonality: with our choice of $\phi$, we have
\begin{align*}
& \Delta_{k}\Delta_{q}u = 0 \quad \mathrm{if} \quad \abs{k-q} \geq 2, \\
& \Delta_{k}(S_{q-1}\Delta_{q}u) = 0 \quad \mathrm{if} \quad \abs{k-q} \geq 5.
\end{align*}

Let us now introduce the homogeneous Besov space.
\begin{definition}
We denote by $\mathcal{S}_{h}'$ the space of tempered distributions $u$ such that
\begin{align*}
\lim_{j \rightarrow -\infty} S_{j}u = 0 \quad \mathrm{in} \quad \mathcal{S}'.
\end{align*}
\end{definition}

\begin{definition}
Let $s$ be a real number and $(p,r)$ be in $[1, \infty]^{2}$. The homogeneous Besov space $\dot{B}_{p,r}^{s}$ consists of distributions
$u$ in $\mathcal{S}_{h}'$ such that
\begin{align*}
\|u\|_{\dot{B}_{p,r}^{s}} := \left( \sum_{j \in \mathbb{Z}} 2^{rjs} \|\Delta_{j}u\|_{L^{p}}^{r} \right)^{1/r} < +\infty.
\end{align*}
\end{definition}
From now on, the notation $\dot{B}_{p}^{s}$ will stand for $\dot{B}_{p,1}^{s}$ and the notation $\dot{B}^{s}$ will stand for $\dot{B}_{2,1}^{s}$.

The study of non stationary PDE's usually requires spaces of type $L_{T}^{r}(X) := L^{r}(0,T; X)$ for appropriate Banach spaces $X$.
In our case, we expect $X$ to be a Besov spaces, so that it is natural to localize the equations through Littlewood-Paley decomposition.
We then get estimates for each dyadic block and perform integration in time. However, in doing so, we obtain bounds in spaces which are not
of type $L^{r}(0,T; \dot{B}_{p}^{s})$. This approach was initiated in \cite{chemin} and naturally leads to the following definitions:
\begin{definition}
Let $(r,p) \in [1, +\infty]^{2}$, $T \in (0, +\infty]$ and $s\in \mathbb{R}$. We set
\begin{align*}
\|u\|_{\tilde{L}_{T}^{r}(\dot{B}_{p}^{s})} := \sum_{q \in \mathbb{Z}} 2^{qs} \left( \int_{0}^{T} \|\Delta_{q}u(t)\|_{L^{p}}^{r} \, dt \right)^{1/r}
\end{align*}
and
\begin{align*}
\tilde{L}_{T}^{r}(\dot{B}_{p}^{s}) := \left\{ u \in L_{T}^{r}(\dot{B}_{p}^{s}), \|u\|_{\tilde{L}_{T}^{r}(\dot{B}_{p}^{s})}< +\infty \right\}.
\end{align*}
\end{definition}
Owing to Minkowski inequality, we have $\tilde{L}_{T}^{r}(\dot{B}_{p}^{s}) \hookrightarrow L_{T}^{r}(\dot{B}_{p}^{s})$. That embedding is strict in general
if $r>1$. We will denote by $\tilde{C}_{T}(\dot{B}_{p}^{s})$ the set of function $u$ belonging to
$\tilde{L}_{T}^{\infty}(\dot{B}_{p}^{s})\cap C([0,T]; \dot{B}_{p}^{s})$.

We will often use the following interpolation inequality:
\begin{align*}
\|u\|_{\tilde{L}_{T}^{r}(\dot{B}_{p}^{s})} \leq \|u\|_{\tilde{L}_{T}^{r_{1}}(\dot{B}_{p}^{s_{1}})}^{\theta} \|u\|_{\tilde{L}_{T}^{r_{2}}(\dot{B}_{p}^{s_{2}})}^{1-\theta},
\end{align*}
with
\begin{align*}
\frac{1}{r} = \frac{\theta}{r_{1}} + \frac{1-\theta}{r_{2}} \quad \mathrm{and} \quad s = \theta s_{1} + (1-\theta) s_{2},
\end{align*}
and the following embeddings
\begin{align*}
\tilde{L}_{T}^{r}(\dot{B}_{p}^{n/p}) \hookrightarrow L_{T}^{r}(\mathcal{C}_{0}) \quad \mathrm{and} \quad \tilde{C}_{T}(\dot{B}_{p}^{n/p})
\hookrightarrow C([0,T]\times \mathbb{R}^{N}).
\end{align*}

Another important space is the bybrid Besov space, we give the definitions and collect some properties.
\begin{definition}
let $s,t \in \mathbb{R}$. We set
\begin{align*}
\|u\|_{B_{q,p}^{s,t}} := \sum_{k \leq R_{0}} 2^{ks} \|\Delta_{k}u\|_{L^{q}} + \sum_{k > R_{0}}2^{kt} \|\Delta_{k}u\|_{L^{p}}.
\end{align*}
and
\begin{align*}
B_{q,p}^{s,t}(\mathbb{R}^{N}) := \left\{ u \in \mathcal{S}_{h}'(\mathbb{R}^{N}), \|u\|_{B_{q,p}^{s,t}} < +\infty \right\},
\end{align*}
where $R_{0}$ is a fixed constant.
\end{definition}

\begin{lemma}

1) We have $B_{2, 2}^{s,s} = \dot{B}^{s}$.

2) If $s\leq t$ then $B_{p,p}^{s,t} = B_{p,p}^{s} \cap B_{p,p}^{t}$. Otherwise, $B_{p,p}^{s,t} = B_{p,p}^{s} + B_{p,p}^{t}$.

3) The space $B_{p,p}^{0,s}$ coincide with the usual inhomogeneous Besov space.

4) If $s_{1} \leq s_{2}$ and $t_{1} \geq t_{2}$ then $B_{p,p}^{s_{1},t_{1}} \hookrightarrow B_{p,p}^{s_{2},t_{2}}$.

5) Interpolation: For $s_{1}, s_{2}, \sigma_{1}, \sigma_{2} \in \mathbb{R}$ and $\theta \in [0,1]$, we have
\begin{align*}
\|f\|_{B_{2,p}^{\theta s_{1} + (1-\theta)s_{2}, \theta \sigma_{1} + (1-\theta)\sigma_{2}}} \leq \|f\|_{B_{2,p}^{s_{1},\sigma_{1}}}^{\theta}
\|f\|_{B_{2,p}^{s_{2},\sigma_{2}}}^{1-\theta}.
\end{align*}
\end{lemma}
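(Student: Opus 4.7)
The plan is to unwind the definition of the hybrid norm in each item and reduce everything to elementary manipulations of the dyadic sums, splitting the index $k$ at the threshold $R_0$. I would address the five parts in order since they are variations on a common template.

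For (1), when $p = q = 2$ and $s = t$, the two sums in the definition of $\|u\|_{B_{2,2}^{s,s}}$ merge into a single sum over all $k \in \mathbb{Z}$ with identical weights, which is exactly $\|u\|_{\dot{B}^{s}_{2,1}} = \|u\|_{\dot{B}^{s}}$. For (4), the monotonicity in $(s,t)$ is immediate from $2^{k(s_2 - s_1)} \leq 2^{R_0(s_2 - s_1)}$ for $k \leq R_0$ (using $s_1 \leq s_2$) and $2^{k(t_2 - t_1)} \leq 2^{R_0(t_2 - t_1)}$ for $k > R_0$ (using $t_1 \geq t_2$), so that each summand in $\|u\|_{B_{p,p}^{s_2,t_2}}$ is controlled by the corresponding summand of $\|u\|_{B_{p,p}^{s_1,t_1}}$.

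For (2) with $s \leq t$, the embedding $B_{p,p}^{s} \cap B_{p,p}^{t} \hookrightarrow B_{p,p}^{s,t}$ is trivial since each half-sum in $\|u\|_{B_{p,p}^{s,t}}$ is dominated by one of the two norms on the left. Conversely, for a member of $B_{p,p}^{s,t}$ I would rewrite $2^{ks} = 2^{k(s - t)} 2^{kt}$ for $k > R_0$, absorb the bounded factor $2^{k(s-t)}$ (since $s - t \leq 0$ and $k > R_0$) into a constant, and argue symmetrically for $k \leq R_0$, thereby recovering bounds in $B_{p,p}^{s}$ and $B_{p,p}^{t}$ separately. When $s > t$, the equality $B_{p,p}^{s,t} = B_{p,p}^{s} + B_{p,p}^{t}$ is produced by the natural splitting $u = S_{R_0 + 1} u + (\mathrm{Id} - S_{R_0 + 1}) u$: the first piece is frequency-localized below $R_0$, so it sits in any $B_{p,p}^{s'}$ with norm controlled by the low-frequency half of $\|u\|_{B_{p,p}^{s,t}}$, and the second piece, being localized above $R_0$, lies in $B_{p,p}^{t}$.

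Item (3) is the delicate one. One must show that the low-frequency part $\sum_{k \leq R_0} \|\Delta_k u\|_{L^p}$ is equivalent to $\|S_{R_0 + 1} u\|_{L^p}$ up to a multiplicative constant depending only on $p$ and $R_0$. The upper bound follows from the triangle inequality, while the reverse follows from Bernstein's inequality combined with the quasi-orthogonality property $\Delta_k \Delta_j = 0$ for $|k - j| \geq 2$ recalled earlier in the excerpt, which lets one control each $\|\Delta_k u\|_{L^p}$ by $\|S_{R_0 + 1} u\|_{L^p}$ with a factor that decays geometrically as $k \to -\infty$, ensuring summability. This matches the standard inhomogeneous Besov norm by definition. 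Finally, for (5), the plan is to apply H\"older's inequality blockwise. Writing each weight as $2^{k(\theta s_1 + (1-\theta)s_2)} = (2^{ks_1})^{\theta}(2^{ks_2})^{1-\theta}$ and each block norm as $\|\Delta_k f\|_{X} = \|\Delta_k f\|_{X}^{\theta}\|\Delta_k f\|_{X}^{1-\theta}$ with $X = L^2$ for $k \leq R_0$ and $X = L^p$ for $k > R_0$, the discrete H\"older inequality $\sum_k a_k^{\theta} b_k^{1-\theta} \leq \bigl(\sum_k a_k\bigr)^{\theta}\bigl(\sum_k b_k\bigr)^{1-\theta}$ applied to each half-sum and then to their sum delivers the stated estimate. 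The main obstacle I anticipate is in (3), where one has to be careful about summability of the low-frequency blocks and the fact that we are working in $\mathcal{S}_h'$, but all of this is handled by standard Littlewood-Paley bookkeeping.
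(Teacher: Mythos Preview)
The paper does not actually prove this lemma: it is stated as a list of standard facts about hybrid Besov spaces and the reader is referred to \cite{FourierPDE,Danchin2005,NavierStokesLp,NavierStokesLpDanchin}. So there is no ``paper's own proof'' to compare against; your sketch is being measured only against correctness.

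Your arguments for (1), (2), (4) and (5) are sound and are exactly the kind of elementary dyadic bookkeeping one expects. The splitting $u = S_{R_0+1}u + (\mathrm{Id}-S_{R_0+1})u$ for the sum case in (2) and the blockwise H\"older for (5) are the standard routes.

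There is, however, a genuine gap in your treatment of (3). You claim that Bernstein's inequality together with quasi-orthogonality lets you bound $\|\Delta_k u\|_{L^p}$ by $\|S_{R_0+1}u\|_{L^p}$ with a factor decaying geometrically as $k\to -\infty$. This is false. Quasi-orthogonality only gives $\Delta_k u = \Delta_k S_{R_0+1}u$ for $k\le R_0-1$, and then the convolution bound yields
\[
\|\Delta_k u\|_{L^p} \le \|2^{kN}h(2^k\cdot)\|_{L^1}\,\|S_{R_0+1}u\|_{L^p} = \|h\|_{L^1}\,\|S_{R_0+1}u\|_{L^p},
\]
which is \emph{uniform} in $k$, with no decay. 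In fact the inequality $\sum_{k\le R_0}\|\Delta_k u\|_{L^p}\le C\|S_{R_0+1}u\|_{L^p}$ is the embedding $L^p\hookrightarrow \dot B^0_{p,1}$ restricted to band-limited functions, and this fails: for instance, with $p=2$ one can take $\hat u(\xi)=|\xi|^{-N/2}(\log(1/|\xi|))^{-\alpha}$ near the origin with $\tfrac12<\alpha\le 1$, which lies in $L^2$ but has $\sum_{k\le 0}\|\Delta_k u\|_{L^2}=\infty$. So as you have interpreted it, the reverse inequality simply does not hold, and your proposed mechanism cannot work. Item (3) as stated in the paper is a loose paraphrase from the cited references; a correct version requires either additional hypotheses or a different interpretation of the inhomogeneous norm, and you should consult Danchin's original formulation rather than try to prove the statement verbatim.
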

From now on, the notation $B_{p}^{s,t}$ will stand for $B_{p,p}^{s,t}$ and the notation $B^{s,t}$ will stand for $B_{2,2}^{s,t}$.

Throughout the paper, we shall use some paradifferential calculus. It is a nice way to define a generalized product between
distributions which is continuous in functional spaces where the usual product does not make sense. The paraproduct between $u$ and $v$
is defined by
\begin{align*}
T_{u}v := \sum_{q \in \mathbb{Z}}S_{q-1}u \Delta_{q}v.
\end{align*}
We thus have the following formal decomposition:
\begin{align*}
uv = T_{u}v + T_{v}u + R(u,v),
\end{align*}
with
\begin{align*}
R(u,v) := \sum_{q\in \mathbb{Z}} \Delta_{q}u \tilde{\Delta}_{q}v, \quad
\tilde{\Delta}_{q} := \Delta_{q-1} + \Delta_{q} + \Delta_{q+1}.
\end{align*}
We will sometimes use the notation $T_{u}'v := T_{u}v + R(u,v)$.

For more information about Besov space and hybrid Besov space, we give reference \cite{FourierPDE,Danchin2005,NavierStokesLp,NavierStokesLpDanchin}.
Throughout this paper, $C$ stands for a ``harmless'' constant, and we sometimes use the notation $A \lesssim B$ as an
equivalent of $A \leq C B$. The notation $A \thickapprox B$ means that $A \lesssim B$ and $B \lesssim A$.

With these preparations, we can state our two main results.
\begin{theorem}\label{local well-posedness}
Let $\bar{\rho} > 0$ and $c_{0} > 0$. Assume that the initial data $(\rho_{0}, u_{0}, B_{0})$ satisfies
\begin{align*}
& \rho_{0} - \bar{\rho} \in \dot{B}_{p,1}^{N/p}, \quad c_{0} \leq \rho_{0} \leq c_{0}^{-1},     \\
& u_{0},B_{0} \in \dot{B}_{p,1}^{N/p-1}.
\end{align*}
Then there exists a positive time $T > 0$ such that if $p \in [2,2N)$, the system has a unique solution $(\rho - \bar{\rho}, u, B)$ satisfies
\begin{align*}
& \rho - \bar{\rho} \in \tilde{C}([0,T];\dot{B}_{p,1}^{N/p}), \quad \frac{1}{2}c_{0} \leq \rho \leq 2 c_{0}^{-1},  \\
& u,B \in \tilde{C}([0,T];\dot{B}_{p,1}^{N/p-1}) \cap L^{1}(0,T;\dot{B}_{p,1}^{N/p+1}).
\end{align*}
In addition, we can take the maximal existence time $T$ as follows
\begin{align*}
T = \frac{\bar{c}}{(1+\|a_{0}\|_{\dot{B}_{p,1}^{N/p}})^4},
\end{align*}
where $\bar{c}$ is a small enough positive constant.
\end{theorem}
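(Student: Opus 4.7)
The plan is to reformulate the system in Lagrangian coordinates. Define the flow $X(t,y) = y + \int_0^t \bar u(s,y)\,ds$, where $\bar u(t,y) = u(t,X(t,y))$, and set $\bar B$, $\bar\rho$ analogously. Mass conservation then yields the algebraic identity $\bar\rho(t,y)\,\det(\nabla_y X(t,y)) = \rho_0(y)$, so the density is no longer an independent unknown: it becomes a nonlinear functional of $\bar u$ and $\rho_0$. With $a_0 := \rho_0/\bar\rho - 1$, the momentum and magnetic equations become coupled quasi-linear parabolic systems in $(\bar u, \bar B)$ whose coefficients are built from $\nabla X - I$ and from $a_0$. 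The task reduces to a fixed-point problem for $(\bar u, \bar B)$ in
\[
E_T := \bigl(\tilde C_T(\dot B_{p,1}^{N/p-1}) \cap L^1_T(\dot B_{p,1}^{N/p+1})\bigr)^2.
\]

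Concretely, I would first treat the constant-coefficient linear problem: let $(u_L, B_L)$ solve the Lam\'e system (viscosities $\mu,\lambda$) and the heat equation (diffusivity $\nu$) with data $(u_0,B_0)$, and define the fixed-point map by feeding all remaining nonlinear and variable-coefficient contributions as sources. Maximal regularity for the Lam\'e system in $L^1_T(\dot B_{p,1}^{N/p+1})$ supplies the principal estimate, and Bony's paraproduct/remainder decomposition controls the source. The restriction $p<2N$ enters precisely so that the bilinear quantities $v\otimes v$, $B\otimes B$, the pressure expansion around $\bar\rho$, and the composition errors in $\nabla X - I$ all lie in $\tilde L^1_T(\dot B_{p,1}^{N/p-1})$: the critical step is the product law coupling $\dot B_{p,1}^{N/p}$ and $\dot B_{p,1}^{N/p-1}$, whose remainder term $R(\cdot,\cdot)$ is summable in the dyadic scale exactly when $p<2N$. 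The Lagrangian framework is what eliminates the transport equation for the density that otherwise forces the stricter bound $p<N$ in the Eulerian analysis.

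With these ingredients in place I would run a Banach fixed-point argument on a ball in $E_T$ of radius comparable to $\|u_0\|_{\dot B_{p,1}^{N/p-1}} + \|B_0\|_{\dot B_{p,1}^{N/p-1}} + \|a_0\|_{\dot B_{p,1}^{N/p}}$, choosing $T$ small enough that the free solutions $(u_L, B_L)$ have small $L^1_T(\dot B_{p,1}^{N/p+1})$ norm and that $\|\nabla X - I\|_{L^\infty_T(\dot B_{p,1}^{N/p})}$ stays below a fixed threshold; the latter uses $\nabla X - I = \int_0^t \nabla \bar u\,ds$ together with the algebra property of $\dot B_{p,1}^{N/p}$. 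Tracking how $\|a_0\|_{\dot B_{p,1}^{N/p}}$ enters through the variable-coefficient form of the Lam\'e operator, the pressure nonlinearity, and the magnetic tension term produces at each round a polynomial factor in $\|a_0\|_{\dot B_{p,1}^{N/p}}$; balancing these contributions against the smallness gained from $T$ gives the quantitative lower bound $T \gtrsim (1+\|a_0\|_{\dot B_{p,1}^{N/p}})^{-4}$. Uniqueness in $E_T$ follows from the same paraproduct machinery applied to the difference of two solutions, without any further smallness hypothesis.

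The main obstacle is the product/commutator analysis at the upper end of the range for $p$: for $p$ close to $2N$ the remainder $R(u,v)$ in the Bony decomposition is only marginally summable, so the dyadic exponents must be bookkept carefully and several estimates must be carried out through the hybrid spaces $B_p^{s,t}$ with different low- and high-frequency indices. A secondary, but non-trivial, point is the return to Eulerian coordinates: showing $X(t,\cdot)$ is a $C^1$-diffeomorphism (from smallness of $\nabla X - I$ in the algebra $\dot B_{p,1}^{N/p}$) and checking that the constraint $\mathrm{div}\,B = 0$ is preserved, the latter being a direct consequence of the magnetic equation once the flow is genuinely regular.
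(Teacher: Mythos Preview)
Your approach matches the paper's: Lagrangian reformulation eliminating the density as an independent unknown, free solutions $(u_L,B_L)$ as reference, and a contraction-mapping argument in $E_T$ driven by maximal regularity for the variable-coefficient Lam\'e operator $L_{\rho_0}$ and the heat equation, with $p<2N$ entering exactly through the product law $\dot B_{p,1}^{N/p}\times\dot B_{p,1}^{N/p-1}\to\dot B_{p,1}^{N/p-1}$ that you identify.

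Two small corrections are worth flagging. First, the paper runs the fixed point on a ball \emph{centered at} $(u_L,B_L)$ with \emph{small} radius $R$ satisfying $(1+\|a_0\|_{\dot B_{p,1}^{N/p}})^2 R\le\eta$, not on a large ball of radius comparable to $\|u_0\|+\|B_0\|+\|a_0\|$; the density perturbation $a_0$ is a fixed parameter in the iteration, not part of the unknown, and its size is absorbed into the constants rather than the radius. Second, no hybrid Besov spaces are needed in the local argument---the standard product and composition estimates in $\dot B_{p,1}^{N/p}$ and $\dot B_{p,1}^{N/p-1}$ handle all of $I_1$--$I_{11}$ uniformly for $p\in[2,2N)$, so the bookkeeping you anticipate near $p=2N$ does not materialize. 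Finally, note that the paper obtains the explicit lower bound $T\gtrsim(1+\|a_0\|_{\dot B_{p,1}^{N/p}})^{-4}$ only after imposing the extra smallness $\|u_0\|_{\dot B_{p,1}^{N/p-1}},\|B_0\|_{\dot B_{p,1}^{N/p-1}}\le R$ (Step~5 of the proof); this is precisely the regime used in the subsequent global theorem, and without it the lifespan would depend on all three data norms, consistent with your ``balancing'' remark.
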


\begin{remark}\label{local_remark}
The above results are necessary for our following global well-posedness results, however,
Theorem \ref{local well-posedness} has its own interest.
As C. Noboru and R. Danchin \cite{LagrangianCompressible} improved the uniqueness condition for Navier-Stokes system,
we improve the uniqueness condition from $p \in [2, N)$ \cite{localBesov1,localBesov2} to $p \in [2, 2N)$ by using Lagrange coordinate technique.
In our case, we need to get the magnetic field equation in Lagrange coordinate and estimate more complex coupling terms.
In addition, we get a lower bound for the maximal existence time which is necessary for our global existence theorem.
\end{remark}

\begin{theorem}\label{mainexistence}
Suppose the equilibrium state $\bar{\rho} > 0$, $\bar{B} = 0$ and dimension $N = 3$.
Let $c_{0}$ to be a small constant, $$P_{+} = \sup_{c_{0}/4 \leq \rho \leq 4 c_{0}^{-1}} |P^{(k)}(\rho)| \quad \text{and}
\quad P_{-} = \inf_{c_{0}/4 \leq \rho \leq 4 c_{0}^{-1}}|P'(\rho)|.$$
Assume that the initial data $(\rho_{0}, u_{0})$ satisfies
\begin{align*}
& \rho_{0} - \bar{\rho} \in H^{s} \cap \dot{B}_{p,1}^{3/p}, \quad c_{0} \leq \rho_{0} \leq c_{0}^{-1},  \\
& u_{0}, B_{0} \in H^{s-1} \cap \dot{B}_{p,1}^{3/p-1},
\end{align*}
for some $p \in (3,6)$ and $s \geq 3$. There exist five constants $c_{1}$, $c_{2}$, $c_{3}$ such that
\begin{align*}
& \|\rho_{0} - \bar{\rho}\|_{L^{2}} \leq c_{1}, \quad \|u_{0}\|_{\dot{B}_{p,1}^{3/p-1}} \leq \frac{c_{2}}{(1+\|a_{0}\|_{\dot{B}_{p,1}^{3/p}})^{5}}, \quad \|B_{0}\|_{\dot{B}_{p,1}^{3/p-1}} \leq \frac{c_{3}}{(1+\|a_{0}\|_{\dot{B}_{p,1}^{3/p}})^{2}}, \\
& \|u_{0}\|_{\dot{H}^{-\delta}} \leq \frac{c_{2}}{(1+\|a_{0}\|_{\dot{B}_{p,1}^{3/p}})^{5}(1+\|\rho_{0}-\bar{\rho}\|_{H^{2}}^{8}+\|u_{0}\|_{L^{2}}^{4/3}
+ \|B_{0}\|_{L^{2}}^{4/3})},    \\
& \|B_{0}\|_{\dot{H}^{-\delta}} \leq \frac{c_{3}}{(1+\|a_{0}\|_{\dot{B}_{p,1}^{3/p}})^{3}(1+\|\rho_{0}-\bar{\rho}\|_{H^{2}}^{8}+\|u_{0}\|_{L^{2}}^{4/3}
+ \|B_{0}\|_{L^{2}}^{4/3})},
\end{align*}
for some $\delta \in \left(1-\frac{3}{p}, \frac{3}{p} \right)$, then there exist a unique global solution $(\rho, u, B)$ satisfying
\begin{align*}
& \rho \geq \frac{c_{0}}{4}, \quad \rho - \bar{\rho} \in C([0,\infty);H^{s}), \\
& u,B \in C([0,\infty);H^{s-1}) \cap L^{2}(0,T;H^{s}),
\end{align*}
for any $T > 0$.
\end{theorem}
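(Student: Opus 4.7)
The plan is to combine the local existence from Theorem \ref{local well-posedness} with a bootstrap argument driven by Hoff-type energy estimates, closing the critical Besov norm by means of the effective viscous flux, and then extending the solution to $[0,\infty)$ via a blow-up criterion. First I would invoke Theorem \ref{local well-posedness} to produce, under the stated smallness assumptions on $(u_{0},B_{0})$ in $\dot{B}_{p,1}^{3/p-1}$, a local solution on $[0,T_{0}]$ with $T_{0}\gtrsim (1+\|a_{0}\|_{\dot{B}_{p,1}^{3/p}})^{-4}$. Since $(\rho_{0}-\bar{\rho},u_{0},B_{0})\in H^{s}\times H^{s-1}\times H^{s-1}$ with $s\geq 3$, I would then propagate this higher Sobolev regularity on $[0,T_{0}]$ by standard parabolic energy estimates on the velocity and magnetic field, using that $\rho$ stays in $[c_{0}/4,4c_{0}^{-1}]$ by continuity. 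This is where I expect Section 3 of the paper (the local theory plus propagation of smoothness) to be used verbatim.

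The core of the argument is to set up a bootstrap on the time interval of existence. Let $T^{*}$ be the maximal time on which the bounds
\begin{align*}
\tfrac{c_{0}}{4}\leq \rho\leq 4c_{0}^{-1},\qquad \|u(t)\|_{\dot{B}_{p,1}^{3/p-1}}+\|B(t)\|_{\dot{B}_{p,1}^{3/p-1}}\leq \eta,
\end{align*}
remain valid, with $\eta$ a small constant depending on $\|a_{0}\|_{\dot{B}_{p,1}^{3/p}}$. On $[0,T^{*})$ I would establish, in Hoff's spirit, a hierarchy of a priori estimates: (i) the basic energy inequality giving uniform control of $\|\rho-\bar{\rho}\|_{L^{2}}$, $\|u\|_{L^{2}}$, $\|B\|_{L^{2}}$ and of $\|\nabla u\|_{L^{2}_{t}L^{2}}+\|\nabla B\|_{L^{2}_{t}L^{2}}$; (ii) negative Sobolev estimates in $\dot{H}^{-\delta}$, which together with the $L^{2}$ bound give time decay of low frequencies; (iii) Hoff-type estimates on $\sqrt{t}\,\nabla u$, $\sqrt{t}\,\nabla B$ and on the effective viscous flux $F=(\lambda+2\mu)\operatorname{div}u-(P(\rho)-P(\bar{\rho}))-\tfrac12|B|^{2}$. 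The crucial point is that $F$ and the vorticity $\omega=\operatorname{curl}u$ satisfy better elliptic equations than $u$, and that the magnetic stress $B\otimes B-\tfrac12|B|^{2}I$ fits into the same framework provided $\|B\|_{L^{\infty}_{t}L^{4}}$ is small, which follows from interpolating the uniform $L^{2}$ bound with the decay of $\nabla B$.

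Once these energy estimates are in hand, I would convert them into a closed estimate for the critical Besov norm. The idea is to split $u$ and $B$ in frequency: low frequencies are controlled by the $\dot{H}^{-\delta}\cap L^{2}$ decay combined with the Hoff estimates, while high frequencies are absorbed using the parabolic smoothing of the momentum and magnetic equations linearized around $(\bar{\rho},0)$, treating the convection, pressure nonlinearity and Lorentz force as perturbations via paradifferential calculus in $\dot{B}_{p,1}^{3/p\pm 1}$. The density perturbation $a=\rho-\bar{\rho}$ is estimated in $\dot{B}_{p,1}^{3/p}$ through the transport equation, which introduces the factor $(1+\|a_{0}\|_{\dot{B}_{p,1}^{3/p}})$ exponentially in $\|\nabla u\|_{L^{1}_{t}L^{\infty}}$; the whole point of the Hoff bounds is to make this factor polynomial and absorbable. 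The precise powers of $(1+\|a_{0}\|_{\dot{B}_{p,1}^{3/p}})$ in the hypotheses of the theorem are tuned exactly so that the closing inequality yields $\|u\|_{\dot{B}_{p,1}^{3/p-1}}+\|B\|_{\dot{B}_{p,1}^{3/p-1}}\leq \eta/2$ on $[0,T^{*})$.

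Finally, with the bootstrap closed, I would invoke the blow-up criterion established in Section 5, which states that the solution can be continued as long as a scaling-critical norm involving $\|\nabla u\|_{L^{1}_{t}L^{\infty}}$, $\|a\|_{L^{\infty}_{t}\dot{B}_{p,1}^{3/p}}$, and the $\dot{B}_{p,1}^{3/p-1}$ norms of $u,B$ remains bounded; all of these are controlled by the a priori bounds, so $T^{*}=\infty$. Uniqueness in the stated class follows from Theorem \ref{local well-posedness}. The hardest step, in my view, will be the Hoff-type estimate for $\nabla u$ and $\nabla B$ uniform in time: the magnetic field enters the momentum equation as a quadratic source $B\cdot\nabla B-\tfrac12\nabla|B|^{2}$ that is not of lower order relative to $\nabla u$ in the effective viscous flux analysis, so a careful pairing of the estimates for $u$ and $B$ (using that $\nu>0$ and that $B$ satisfies a parabolic equation similar to $u$) is required to avoid a circular dependence when closing the smallness of the Besov norm.
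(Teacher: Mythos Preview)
Your outline has the right ingredients (local theory, propagation of regularity, Hoff-type estimates, blow-up criterion) but the way you assemble them differs from the paper in a way that leaves a genuine gap.

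The paper does \emph{not} run a global bootstrap on the critical Besov norms $\|u(t)\|_{\dot B_{p,1}^{3/p-1}}+\|B(t)\|_{\dot B_{p,1}^{3/p-1}}$. Trying to close such a bootstrap is exactly the difficulty you flag at the end: the transport estimate for $a$ in $\dot B_{p,1}^{3/p}$ brings in $\exp(C\int_0^t\|\nabla u\|_{L^\infty})$, and the Hoff estimates do \emph{not} turn this into something absorbable for large $\|a_0\|_{\dot B_{p,1}^{3/p}}$. The paper avoids this entirely by a two-phase argument. Phase one is purely local: on the short interval $[0,T_1]$ guaranteed by Theorem~\ref{local well-posedness} (with $T_1\gtrsim(1+\|a_0\|_{\dot B_{p,1}^{3/p}})^{-4}(1+\|a_0\|_{H^2})^{-12}$), one propagates the $\dot H^{-\delta}$ regularity (Proposition~\ref{lowp}) to obtain $\|(u,B)\|_{L^{3/2}_{T_1}(H^1)}\lesssim(1+\|a_0\|)(\tilde c_2+\tilde c_3+P_+T_1\|\rho_0-\bar\rho\|_{H^2})$. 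The key step you are missing is then a \emph{time-shifting trick}: by the pigeonhole principle there exists $t_0\in(0,T_1)$ at which $\|u(t_0)\|_{H^1}+\|B(t_0)\|_{H^1}+\|a(t_0)\|_{L^2}\leq\epsilon_0$, where $\epsilon_0$ is the small-energy threshold of the Hoff machinery. The precise powers of $(1+\|a_0\|_{\dot B_{p,1}^{3/p}})$ and of $\|\rho_0-\bar\rho\|_{H^2}$ in the hypotheses are calibrated so that dividing by $T_1^{2/3}$ still leaves this small. Phase two then starts fresh from $t_0$ with small $H^1$ data and runs Hoff's estimates (Theorem~\ref{hofftheorem}) in a pure Sobolev framework; the Besov norms are never revisited.

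Two smaller points. First, the blow-up criterion in the paper (Theorem~\ref{blowupcriterion}) is in terms of $\|\rho(t)\|_{L^\infty}+\|u(t)\|_{L^q}+\|B(t)\|_{L^q}$ for $q\geq 6$, not in terms of $\|\nabla u\|_{L^1_tL^\infty}$ or Besov norms; these $L^\infty$ and $L^6$ quantities are exactly what Hoff's estimates control uniformly. Second, the effective viscous flux used in the paper is $F=(\lambda+\mu)\mathrm{div}\,u-P(\rho)+P(\bar\rho)$ without the magnetic pressure; the Lorentz force enters only through the equation $\Delta F=\mathrm{div}\,g$ with $g=\rho\dot u+\nabla(\tfrac12|B|^2)-\mathrm{div}(B\otimes B)$.
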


The proof of the above theorem used the advantage of
harmonic analysis and the structure variable called effective viscous flux.
The magnetic field and velocity field coupled with each other in the compressible MHD system
so we need to estimate many coupled terms.
Based on the Theorem \ref{local well-posedness}, we used energy methods in \cite{mhd2012hoff}
which can incorporate the structure conditions of our system.
However, our regularity conditions are better than the conditions in \cite{mhd2012hoff}, so
we need not add additional conditions for viscosity coefficient to extend the local solution to
a global one.

\begin{remark}
Isentropic Navier-Stokes system is well-posedness when the initial velocity field have large oscillation which
is proved in \cite{NavierStokesLp,NavierStokesLpDanchin}.
Until now, there are no such results for compressible MHD systems for the coupling between
velocity and magnetic field make MHD system are more complex than Navier-Stokes equations.
The above theorem allows the initial velocity field
and magnetic field have large oscillation and allows the initial density to have large oscillation on the set of small measure.
Comparing to previous results for MHD system, the theorem above highly reduce the requirements about the small condition on the
initial data.
\end{remark}


\section{Local Existence and Propagation of Regularity}

In this section, we will prove Theorem \ref{local well-posedness} and prove that the obtained solution can propagate the smoothness
of the initial data. Without loss of generality, we can assume $\bar{\rho} = 1$ from this section to the last section.

Let $X_{u}$ be the flow associated to the vector field $u$, that is the solution to
\begin{align}
X_{u}(t,y) = y + \int_{0}^{t} u(\tau, X_{u}(\tau,y)) \, d\tau.
\end{align}
Denoting
\begin{align*}
\tilde{\rho}(t,y) = \rho(t,X_{u}(t,y)), \quad
\tilde{u}(t,y) = u(t,X_{u}(t,y)), \quad
\tilde{B}(t,y) = B(t,X_{u}(t,y)).
\end{align*}
Then we reformulate the MHD system in the Lagrangian coordinate to obtain
\begin{align}\label{MHD in Largangian}
\begin{split}
\begin{cases}
\partial_{t}(J_{u}\tilde{\rho}) = 0,    \\
J_{u}\rho_{0}\partial_{t}\tilde{u} - \mathrm{div}\left( \mathrm{adj}(DX_{u})\left( 2\mu D_{A_{u}}\tilde{u} + \lambda \mathrm{div}_{A_{u}}\tilde{u}\,\mathrm{Id}
 + P(\tilde{\rho})\mathrm{Id} \right) \right) \\
\quad\quad\quad\quad\quad\quad\quad
= \mathrm{div}\left( \mathrm{adj}(DX_{u}) \tilde{B} \tilde{B}^{T} \right) - \frac{1}{2} \mathrm{div}\left( \mathrm{adj}(DX_{u}) |\tilde{B}|^{2} \right),  \\
J_{u} \partial_{t}\tilde{B} - \nu \mathrm{div}\left( \mathrm{adj}(DX_{u}) A_{u}^{T} \nabla\tilde{B} \right) \\
\quad\quad\quad\quad\quad\quad\quad
= \mathrm{div}\left( \mathrm{adj}(DX_{u})\tilde{B} \tilde{u}^{T} \right) - \mathrm{div}\left( \mathrm{adj}(DX_{u})\tilde{u}\tilde{B} \right),     \\
\mathrm{div}(A_{u}\tilde{B}) = 0,
\end{cases}
\end{split}
\end{align}
where $A_{u} = (D_{y}X_{u})^{-1}$.
During the transformation between Euler coordinates and Lagrangian coordinate, we used (\ref{transform}).

Next, we introduce $u_{L}$ the ``free solution'' to
\begin{align}\label{free velocity}
\begin{split}
\begin{cases}
\partial_{t}u_{L} - \mu \Delta u_{L} - (\lambda + \mu) \nabla\mathrm{div} u_{L} = 0,   \\
u_{L}|_{t=0} = u_{0},
\end{cases}
\end{split}
\end{align}
and $B_{L}$ the ``free solution'' to
\begin{align}\label{free megnetic}
\begin{split}
\begin{cases}
\partial_{t}B_{L} - \nu \Delta B_{L} = 0,   \\
\mathrm{div}B_{L} = 0,  \\
B_{L}|_{t=0} = B_{0}.
\end{cases}
\end{split}
\end{align}
Denoting
\begin{align}
L_{\rho_{0}}(\tilde{u}) =
\partial_{t}\tilde{u} - \frac{1}{\rho_{0}} \mathrm{div} \left( 2\mu \mathcal{D}(\tilde{u}) + \lambda \mathrm{div}\tilde{u}\, \mathrm{Id} \right),
\end{align}
and
\begin{align}
L_{\nu}(\tilde{B}) = \partial_{t}\tilde{B} - \nu \Delta\tilde{B}.
\end{align}
We can rewrite the second and third equations in system (\ref{MHD in Largangian}) as follows
\begin{align}\label{mhdeq2}
\begin{split}
\begin{cases}
L_{\rho_{0}}(\tilde{u}) = I_{1}(\tilde{u},\tilde{u}) + \rho_{0}^{-1}\mathrm{div}(I_{2}(\tilde{u},\tilde{u})
+ I_{3}(\tilde{u},\tilde{u}) + I_{4}(\tilde{u}) )   \\
\quad\quad\quad\quad\quad\quad\quad\quad\quad\quad\quad\quad\quad\quad
+ \rho_{0}^{-1}\mathrm{div}(I_{5}(\tilde{u},\tilde{B}) + I_{6}(\tilde{u},\tilde{B})),   \\
L_{\nu}(\tilde{B}) = I_{7}(\tilde{u},\tilde{B}) + \mathrm{div}(I_{8}(\tilde{u},\tilde{B}) + I_{9}(\tilde{u},\tilde{B})  \\
\quad\quad\quad\quad\quad\quad\quad\quad\quad\quad\quad\quad\quad\quad
+ I_{10}(\tilde{u},\tilde{B},\tilde{u}) + I_{11}(\tilde{u},\tilde{B},\tilde{u}) ),
\end{cases}
\end{split}
\end{align}
where
\begin{align*}
& I_{1}(v,w) = (1-J_{v})\partial_{t}w,    \\
& I_{2}(v,w) = (\mathrm{adj}(DX_{v})-Id)(\mu Dw\cdot A_{v} + \mu A_{v}^{T}\nabla w + \lambda A_{v}^{T} : \nabla w), \\
& I_{3}(v,w) = \mu Dw \cdot (A_{v}-Id) + \mu (A_{v}^{T}-Id) \nabla w + \lambda (A_{v}^{T}-Id) : \nabla w,   \\
& I_{4}(v) = \mathrm{adj}(DX_{v})P(J_{v}^{-1}\rho_{0})\, Id, \quad I_{5}(v,w) = \mathrm{adj}(DX_{v})ww^{T},  \\
& I_{6}(v,w) = \frac{1}{2} \mathrm{adj}(DX_{v})|w|^{2}, \quad I_{7}(v,w) = (1-J_{v})\partial_{t}w,  \\
& I_{8}(v,w) = \nu (\mathrm{adj}(DX_{v})-Id)A_{v}^{T} \nabla w,  \quad I_{9}(v,w) = \nu (A_{v}^{T}-Id)\nabla w, \\
& I_{10}(v,w,k) = \mathrm{adj}(\nabla X_{v})wk^{T}, \quad I_{11}(v,w,k) = \mathrm{adj}(DX_{v})wk.
\end{align*}

In order to prove the local well-posedness, we define a map $\mathbf{\Phi} : (v,b) \mapsto (\tilde{u},\tilde{B})$ where
$(\tilde{u}, \tilde{B})$ satisfies (\ref{mhdeq2}) with the $\tilde{u}$, $\tilde{B}$ on the right hand side changed to $v$, $b$ separately.
Denoting $\hat{u} = \tilde{u}-u_{L}$ and $\hat{B} = \tilde{B}-B_{L}$, then we know that $\hat{u}$ and $\hat{B}$ satisfies
\begin{align}\label{mhdeq3}
\begin{split}
\begin{cases}
L_{\rho_{0}}(\hat{u}) = (L_{1} - L_{\rho_{0}})u_{L} + I_{1}(v,v)    \\
\quad\quad\quad\quad\quad
+ \rho_{0}^{-1}\mathrm{div}(I_{2}(v,v) + I_{3}(v,v) + I_{4}(v) + I_{5}(v,b) + I_{6}(v,b)),  \\
L_{\nu}(\hat{B}) = I_{7}(v,b) + \mathrm{div}(I_{8}(v,b) + I_{9}(v,b) + I_{10}(v,b,v) + I_{11}(v,b,v)),  \\
\mathrm{div}(\hat{B}) = \mathrm{div}((Id - A_{v})b),        \\
(\hat{u}, \hat{B})|_{t=0} = (0,0).
\end{cases}
\end{split}
\end{align}
In order to give a clear statement, we need to define
\begin{align*}
& E_{p}(T) = \left\{ v \in \tilde{L}([0,T];\dot{B}_{p,1}^{N/p-1}): \partial_{t}v, \nabla^{2}v \in \tilde{L}^{1}(0,T;\dot{B}_{p,1}^{N/p-1}) \right\},  \\
& \bar{B}_{E_{p}(T)}(u_{L},B_{L};R) = \left\{ v,b: \|v-u_{L}\|_{E_{p}(T)} + \|b-B_{L}\|_{E_{p}(T)} \leq R \right\}.
\end{align*}
Suppose that $T,R \leq 1$, and $T,R$ small enough such that
\begin{align}\label{smallcondition}
\int_{0}^{T} \|\nabla v\|_{\dot{B}_{p,1}^{N/p}}\, dt \leq c < 1.
\end{align}

$\mathbf{Step \,\, 1:Stability\,\,\, of\,\,\, \bar{B}_{E_{p}(T)}(u_{L},B_{L};R)\,\,\, for\,\,\, small\,\,\,
enough\,\,\, R\,\,\, and\,\,\, T .}$
Using Lemma \ref{variable heat eq}, we could get
\begin{align*}
\|\hat{u}\|_{E_{p}(T)} \leq & C e^{C_{\rho_{0},m}T} \Big( \|I_{1}(v,v)\|_{L_{T}^{1}(\dot{B}_{p,1}^{N/p-1})}
+ \|(L_{1}-L_{\rho_{0}})u_{L}\|_{L_{T}^{1}(\dot{B}_{p,1}^{N/p-1})} \Big)   \\
& + (1+\|a_{0}\|_{\dot{B}_{p,1}^{N/p}})\Big( \|I_{2}(v,v)\|_{L_{T}^{1}(\dot{B}_{p,1}^{N/p})} + \|I_{3}(v,v)\|_{L_{T}^{1}(\dot{B}_{p,1}^{N/p})}    \\
& + \|I_{4}(v)\|_{L_{T}^{1}(\dot{B}_{p,1}^{N/p})} + \|I_{5}(v,b)\|_{L_{T}^{1}(\dot{B}_{p,1}^{N/p})} + \|I_{6}(v,b)\|_{L_{T}^{1}(\dot{B}_{p,1}^{N/p})} \Big).
\end{align*}
Using Lemma \ref{heat eq}, we could get
\begin{align*}
\|\hat{B}\|_{E_{p}(T)} \leq & C \Big( \|I_{7}(v,b)\|_{L_{T}^{1}(\dot{B}_{p,1}^{N/p-1})} + \|I_{8}(v,b)\|_{L_{T}^{1}(\dot{B}_{p,1}^{N/p})}
+ \|I_{9}(v,b)\|_{L_{T}^{1}(\dot{B}_{p,1}^{N/p})} \\
& + \|I_{10}(v,b,v)\|_{L_{T}^{1}(\dot{B}_{p,1}^{N/p})} + \|I_{11}(v,b,v)\|_{L_{T}^{1}(\dot{B}_{p,1}^{N/p})} \Big).
\end{align*}
There are many terms can be estimated similar to Navier-Stokes system as follows
\begin{align*}
& \|I_{1}(v,w)\|_{L_{T}^{1}(\dot{B}_{p,1}^{N/p-1})} \leq C \|v\|_{L_{T}^{1}(\dot{B}_{p,1}^{N/p+1})}\|\partial_{t}w\|_{L_{T}^{1}(\dot{B}_{p,1}^{N/p-1})},  \\
& \|(L_{1}-L_{\rho_{0}})u_{L}\|_{L_{T}^{1}(\dot{B}_{p,1}^{N/p-1})}
\leq C \|a_{0}\|_{\dot{B}_{p,1}^{N/p}}(1+\|a_{0}\|_{\dot{B}_{p,1}^{N/p}})\|u_{L}\|_{L_{T}^{1}(\dot{B}_{p,1}^{N/p+1})},   \\
& \|I_{i}(v,w)\|_{L_{T}^{1}(\dot{B}_{p,1}^{N/p})} \leq C (1+\|a_{0}\|_{\dot{B}_{p,1}^{N/p}})\|v\|_{L_{T}^{1}(\dot{B}_{p,1}^{N/p})}\|w\|_{L_{T}^{1}(\dot{B}_{p,1}^{N/p})},
\quad (i=2,3),  \\
& \|I_{4}(v)\|_{L_{T}^{1}(\dot{B}_{p,1}^{N/p})} \leq C T (1+\|a_{0}\|_{\dot{B}_{p,1}^{N/p}}) (1+\|v\|_{L_{T}^{1}(\dot{B}_{p,1}^{N/p})}).
\end{align*}
Here, we only give the estimates for the terms different to Navier-Stokes systems.
\begin{align*}
\|I_{5}(v,w)\|_{L_{T}^{1}(\dot{B}_{p,1}^{N/p})} & \leq C \left(\|\mathrm{adj}(DX_{v})-Id\|_{\tilde{L}_{T}^{\infty}(\dot{B}_{p,1}^{N/p})} + 1\right)\|w\|^{2}_{\tilde{L}_{T}^{2}(\dot{B}_{p,1}^{N/p})}    \\
& \leq C (1+ \|v\|_{\tilde{L}_{T}^{1}(\dot{B}_{p,1}^{N/p+1})}) \|w\|_{\tilde{L}_{T}^{2}(\dot{B}_{p,1}^{N/p})}^{2},
\end{align*}
\begin{align*}
\|I_{6}(v,w)\|_{L_{T}^{1}(\dot{B}_{p,1}^{N/p})} & \leq C (1+ \|v\|_{\tilde{L}_{T}^{1}(\dot{B}_{p,1}^{N/p+1})}) \|w\|_{\tilde{L}_{T}^{2}(\dot{B}_{p,1}^{N/p})}^{2},
\end{align*}
\begin{align*}
\|I_{7}(v,w)\|_{L_{T}^{1}(\dot{B}_{p,1}^{N/p-1})} & \leq C \|1-J_{v}\|_{\tilde{L}_{T}^{\infty}(\dot{B}_{p,1}^{N/p})}\|\partial_{t}w\|_{\tilde{L}_{T}^{1}(\dot{B}_{p,1}^{N/p-1})}   \\
& \leq C \|v\|_{\tilde{L}_{T}^{1}(\dot{B}_{p,1}^{N/p+1})} \|\partial_{t}w\|_{\tilde{L}_{T}^{1}(\dot{B}_{p,1}^{N/p-1})},
\end{align*}
\begin{align*}
\|I_{8}(v,w)\|_{L_{T}^{1}(\dot{B}_{p,1}^{N/p})} & \leq C \|\mathrm{adj}(DX_{v}) - Id\|_{\tilde{L}_{T}^{\infty}(\dot{B}_{p,1}^{N/p})}
\|A_{v}^{T} - Id\|_{\tilde{L}_{T}^{\infty}(\dot{B}_{p,1}^{N/p})} \|w\|_{\tilde{L}_{T}^{1}(\dot{B}_{p,1}^{N/p+1})}   \\
& + C \|\mathrm{adj}(DX_{v})-Id\|_{\tilde{L}_{T}^{\infty}(\dot{B}_{p,1}^{N/p})}\|w\|_{\tilde{L}_{T}^{1}(\dot{B}_{p,1}^{N/p})}   \\
& \leq C \|v\|_{\tilde{L}_{T}^{1}(\dot{B}_{p,1}^{N/p+1})} \|w\|_{\tilde{L}_{T}^{1}(\dot{B}_{p,1}^{N/p+1})},
\end{align*}
\begin{align*}
\|I_{9}(v,w)\|_{L_{T}^{1}(\dot{B}_{p,1}^{N/p})} & \leq C \|A_{v}^{T}-Id\|_{\tilde{L}_{T}^{\infty}(\dot{B}_{p,1}^{N/p})}
\|w\|_{\tilde{L}_{T}^{1}(\dot{B}_{p,1}^{N/p+1})}  \\
& \leq C \|v\|_{\tilde{L}_{T}^{1}(\dot{B}_{p,1}^{N/p+1})} \|w\|_{\tilde{L}_{T}^{1}(\dot{B}_{p,1}^{N/p+1})},
\end{align*}
\begin{align*}
\|I_{10}(v,w,k)\|_{L_{T}^{1}(\dot{B}_{p,1}^{N/p})} & \leq C (1+ \|\mathrm{adj}(\nabla X_{v})-Id\|_{\tilde{L}_{T}^{\infty}(\dot{B}_{p,1}^{N/p})})
\|w\|_{\tilde{L}_{T}^{2}(\dot{B}_{p,1}^{N/p})} \|k\|_{\tilde{L}_{T}^{2}(\dot{B}_{p,1}^{N/p})}   \\
& \leq C (1+ \|v\|_{\tilde{L}_{T}^{1}(\dot{B}_{p,1}^{N/p})}) \|k\|_{\tilde{L}_{T}^{2}(\dot{B}_{p,1}^{N/p})} \|w\|_{\tilde{L}_{T}^{2}(\dot{B}_{p,1}^{N/p})},
\end{align*}
\begin{align*}
\|I_{11}(v,w,k)\|_{L_{T}^{1}(\dot{B}_{p,1}^{N/p})} & \leq C (1+ \|v\|_{\tilde{L}_{T}^{1}(\dot{B}_{p,1}^{N/p})}) \|k\|_{\tilde{L}_{T}^{2}(\dot{B}_{p,1}^{N/p})} \|w\|_{\tilde{L}_{T}^{2}(\dot{B}_{p,1}^{N/p})}.
\end{align*}
Combining the estimates about $I_{1}$ to $I_{11}$, we could finally get
\begin{align*}
\|\hat{u}\|_{E_{p}(T)} \leq & C e^{C_{\rho_{0},m}T} (1+\|a_{0}\|_{\dot{B}_{p,1}^{N/p}})^{2} \Big\{ T + \|a_{0}\|_{\dot{B}_{p,1}^{N/p}}\|u_{L}\|_{\tilde{L}_{T}^{1}(\dot{B}_{p,1}^{N/p+1})}   \\
+ & (\|\hat{v}\|_{\tilde{L}_{T}^{1}(\dot{B}_{p,1}^{N/p+1})}+\|u_{L}\|_{\tilde{L}_{T}^{1}(\dot{B}_{p,1}^{N/p+1})})
(\|\partial_{t}\hat{v}\|_{\tilde{L}_{T}^{1}(\dot{B}_{p,1}^{N/p-1})} + \|\partial_{t}u_{L}\|_{\tilde{L}_{T}^{1}(\dot{B}_{p,1}^{N/p-1})}) \\
& + (\|\hat{v}\|_{\tilde{L}_{T}^{1}(\dot{B}_{p,1}^{N/p+1})} + \|u_{L}\|_{\tilde{L}_{T}^{1}(\dot{B}_{p,1}^{N/p+1})})^{2} \\
& + (\|\hat{b}\|_{\tilde{L}_{T}^{2}(\dot{B}_{p,1}^{N/p})} + \|b_{L}\|_{\tilde{L}_{T}^{2}(\dot{B}_{p,1}^{N/p})})^{2}
\Big\},
\end{align*}
and
\begin{align*}
\|\hat{B}\|_{E_{p}(T)} \leq & C \Big( (\|u_{L}\|_{\tilde{L}_{T}^{1}(\dot{B}_{p,1}^{N/p+1})}
+ \|\hat{v}\|_{\tilde{L}_{T}^{1}(\dot{B}_{p,1}^{N/p+1})}) (\|\partial_{t}B_{L}\|_{\tilde{L}_{T}^{1}(\dot{B}_{p,1}^{N/p-1})} \\
& + \|\partial_{t}\hat{b}\|_{\tilde{L}_{T}^{1}(\dot{B}_{p,1}^{N/p-1})}
+ \|\hat{b}\|_{\tilde{L}_{T}^{1}(\dot{B}_{p,1}^{N/p+1})} + \|B_{L}\|_{\tilde{L}_{T}^{1}(\dot{B}_{p,1}^{N/p+1})})    \\
& + (\|\hat{v}\|_{\tilde{L}_{T}^{2}(\dot{B}_{p,1}^{N/p})}+\|v_{L}\|_{\tilde{L}_{T}^{2}(\dot{B}_{p,1}^{N/p})})
(\|\hat{b}\|_{\tilde{L}_{T}^{2}(\dot{B}_{p,1}^{N/p})} + \|B_{L}\|_{\tilde{L}_{T}^{2}(\dot{B}_{p,1}^{N/p})}) \Big),
\end{align*}
where $\hat{v} = v-u_{L}$, $\hat{b} = b-B_{L}$.

We could reduce the above two inequalities further to get
\begin{align}\label{local u}
\begin{split}
\|\hat{u}\|_{E_{p}(T)} \leq & C e^{C_{\rho_{0},m}T} (1+\|a_{0}\|_{\dot{B}_{p,1}^{N/p}})^{2} \Big\{ T
+ \|a_{0}\|_{\dot{B}_{p,1}^{N/p}}\|u_{L}\|_{\tilde{L}_{T}^{1}(\dot{B}_{p,1}^{N/p+1})}   \\
& + (R + \|u_{L}\|_{\tilde{L}_{T}^{1}(\dot{B}_{p,1}^{N/p})})(R + \|\partial_{t}u_{L}\|_{\tilde{L}_{T}^{1}(\dot{B}_{p,1}^{N/p-1})}   \\
& + \|u_{L}\|_{\tilde{L}_{T}^{1}(\dot{B}_{p,1}^{N/p+1})}) + (R + \|B_{L}\|_{\tilde{L}_{T}^{2}(\dot{B}_{p,1}^{N/p})})^{2}
\Big\},
\end{split}
\end{align}
and
\begin{align}\label{local B}
\begin{split}
\|\hat{B}\|_{E_{p}(T)} \leq & C (R + \|u_{L}\|_{\tilde{L}_{T}^{2}(\dot{B}_{p,1}^{N/p})} + \|u_{L}\|_{\tilde{L}_{T}^{1}(\dot{B}_{p,1}^{N/p})})
(R + \|\partial_{t}B_{L}\|_{\tilde{L}_{T}^{1}(\dot{B}_{p,1}^{N/p-1})} \\
& + \|B_{L}\|_{\tilde{L}_{T}^{1}(\dot{B}_{p,1}^{N/p+1})} +
\|B_{L}\|_{\tilde{L}_{T}^{2}(\dot{B}_{p,1}^{N/p})}).
\end{split}
\end{align}
Here, if we assume
\begin{align}\label{timebound1}
\begin{split}
& C_{\rho_{0},m}T \leq \log 2, \quad T \leq R^{2},  \\
& \|a_{0}\|_{\dot{B}_{p,1}^{N/p}}\|u_{L}\|_{\tilde{L}_{T}^{1}(\dot{B}_{p,1}^{N/p})} \leq R^{2}, \\
& \|\partial_{t}u_{L}\|_{\tilde{L}_{T}^{1}(\dot{B}_{p,1}^{N/p-1})} + \|u_{L}\|_{\tilde{L}_{T}^{1}(\dot{B}_{p,1}^{N/p+1})}
+ \|u_{L}\|_{\tilde{L}_{T}^{2}(\dot{B}_{p,1}^{N/p})} \leq R,  \\
& \|\partial_{t}B_{L}\|_{\tilde{L}_{T}^{1}(\dot{B}_{p,1}^{N/p-1})} + \|B_{L}\|_{\tilde{L}_{T}^{1}(\dot{B}_{p,1}^{N/p+1})}
+ \|B_{L}\|_{\tilde{L}_{T}^{2}(\dot{B}_{p,1}^{N/p})} \leq R,  \\
& (1+\|a_{0}\|_{\dot{B}_{p,1}^{N/p}})^{2}R \leq \eta < \frac{1}{20 C},
\end{split}
\end{align}
then we know that
\begin{align*}
\|\hat{u}\|_{E_{p}(T)} \leq R, \quad \|\hat{B}\|_{E_{p}(T)} \leq R.
\end{align*}
Hence, $\mathbf{\Phi}$ is a self map on the ball $\bar{B}_{E_{p}(T)}(u_{L},B_{L};R)$.

$\mathbf{Step \,\, 2:Contraction\,\,\, estimates.}$
Taking $(v^{1},b^{1}) \in \bar{B}_{E_{p}(T)}(u_{L},B_{L};R)$ and $(v^{2},b^{2}) \in \bar{B}_{E_{p}(T)}(u_{L},B_{L};R)$, we define
$(\tilde{u}^{1},\tilde{B}^{1}) = \mathbf{\Phi}(v^{1},b^{1})$ and $(\tilde{u}^{2},\tilde{B}^{2}) = \mathbf{\Phi}(v^{1},b^{1})$.
Denote $\delta\tilde{u} = \tilde{u}^{2} - \tilde{u}^{1}$, $\delta\tilde{B} = \tilde{B}^{2} - \tilde{B}^{1}$,
$\delta v = v^{2} - v^{1}$ and $\delta b = b^{2} - b^{1}$.

Through simple calculations, we have
\begin{align*}
L_{\rho_{0}}(\delta \tilde{u}) = & I_{1}(v^{1},\delta v) + (J_{v^{1}} - J_{v^{2}})\partial_{t}v^{2} \\
& + \rho_{0}^{-1}\Big\{ (I_{2}(v^{2},v^{2})-I_{2}(v^{1},v^{1})) + (I_{3}(v^{2},v^{2})-I_{3}(v^{1},v^{1}))   \\
& + (I_{4}(v^{2})-I_{4}(v^{1})) + (I_{5}(v^{2},b^{2})-I_{5}(v^{1},b^{1}))   \\
& + (I_{6}(v^{2},b^{2})-I_{6}(v^{1},b^{1}))
\Big\},
\end{align*}
and
\begin{align*}
L_{\nu}(\delta \tilde{B}) = & (I_{7}(v^{2},b^{2})-I_{7}(v^{1},b^{1})) + \mathrm{div}\Big\{ (I_{8}(v^{2},b^{2})-I_{8}(v^{1},b^{1}))  \\
& + (I_{9}(v^{2},b^{2})-I_{9}(v^{1},b^{1})) + (I_{10}(v^{2},b^{2},v^{2})-I_{10}(v^{1},b^{1},v^{1}))      \\
& + (I_{11}(v^{2},b^{2},v^{2})-I_{11}(v^{1},b^{1},v^{1}))
\Big\}.
\end{align*}
Under the condition $C_{\rho_{0},m}T \leq \log 2$, using Lemma \ref{variable heat eq} and Lemma \ref{heat eq}, we get
\begin{align*}
\|\delta \tilde{u}\|_{E_{p}(T)} \leq & C (1+\|a_{0}\|_{\dot{B}_{p,1}^{N/p}})\Big( \|I_{1}(v^{1},\delta v)\|_{\tilde{L}_{T}^{1}(\dot{B}_{p,1}^{N/p-1})}  \\
& + \|(J_{v^{1}}-J_{v^{2}})\partial_{t}v^{2}\|_{\tilde{L}_{T}^{1}(\dot{B}_{p,1}^{N/p-1})}
+ \|I_{2}(v^{2},v^{2})-I_{2}(v^{1},v^{1})\|_{\tilde{L}_{T}^{1}(\dot{B}_{p,1}^{N/p})}      \\
& + \|I_{3}(v^{2},v^{2})-I_{3}(v^{1},v^{1})\|_{\tilde{L}_{T}^{1}(\dot{B}_{p,1}^{N/p})}
+ \|I_{4}(v^{2})-I_{4}(v^{1})\|_{\tilde{L}_{T}^{1}(\dot{B}_{p,1}^{N/p})}      \\
& + \|I_{5}(v^{2},b^{2})-I_{5}(v^{1},b^{1})\|_{\tilde{L}_{T}^{1}(\dot{B}_{p,1}^{N/p})}    \\
& + \|I_{6}(v^{2},b^{2})-I_{6}(v^{1},b^{1})\|_{\tilde{L}_{T}^{1}(\dot{B}_{p,1}^{N/p})}
\Big),
\end{align*}
and
\begin{align*}
\|\delta\tilde{B}\|_{E_{p}(T)} \leq & C \Big( \|I_{7}(v^{2},b^{2})-I_{7}(v^{1},b^{1})\|_{\tilde{L}_{T}^{1}(\dot{B}_{p,1}^{N/p-1})}    \\
& + \|I_{8}(v^{2},b^{2})-I_{8}(v^{1},b^{1})\|_{\tilde{L}_{T}^{1}(\dot{B}_{p,1}^{N/p})}    \\
& + \|I_{9}(v^{2},b^{2})-I_{9}(v^{1},b^{1})\|_{\tilde{L}_{T}^{1}(\dot{B}_{p,1}^{N/p})}    \\
& + \|I_{10}(v^{2},b^{2},v^{2})-I_{10}(v^{1},b^{1},v^{1})\|_{\tilde{L}_{T}^{1}(\dot{B}_{p,1}^{N/p})}  \\
& + \|I_{11}(v^{2},b^{2},v^{2})-I_{11}(v^{1},b^{1},v^{1})\|_{\tilde{L}_{T}^{1}(\dot{B}_{p,1}^{N/p})}
\Big).
\end{align*}
There are many terms appeared in the Navier-Stokes system, we only list the estimates
\begin{align*}
\|I_{1}(v^{1},\delta v)\|_{\tilde{L}_{T}^{1}(\dot{B}_{p,1}^{N/p-1})} \leq & C \|v^{1}\|_{\tilde{L}_{T}^{1}(\dot{B}_{p,1}^{N/p+1})}
\|\partial_{t}\delta v\|_{\tilde{L}_{T}^{1}(\dot{B}_{p,1}^{N/p-1})},
\end{align*}
\begin{align*}
\|(J_{v^{1}}-J_{v^{2}})\partial_{t}v^{2}\|_{\tilde{L}_{T}^{1}(\dot{B}_{p,1}^{N/p-1})} \leq & C \|\delta v\|_{\tilde{L}_{T}^{1}(\dot{B}_{p,1}^{N/p+1})}
\|\partial_{t}v^{2}\|_{\tilde{L}_{T}^{1}(\dot{B}_{p,1}^{N/p-1})},
\end{align*}
\begin{align*}
\|I_{2}(v^{2},v^{2})-I_{2}(v^{1},v^{1})\|_{\tilde{L}_{T}^{1}(\dot{B}_{p,1}^{N/p})} \leq & C \|(v^{1},v^{2})\|_{\tilde{L}_{T}^{1}(\dot{B}_{p,1}^{N/p+1})}
\|\delta v\|_{\tilde{L}_{T}^{1}(\dot{B}_{p,1}^{N/p+1})},
\end{align*}
\begin{align*}
\|I_{3}(v^{2},v^{2})-I_{3}(v^{1},v^{1})\|_{\tilde{L}_{T}^{1}(\dot{B}_{p,1}^{N/p})} \leq & C \|(v^{1},v^{2})\|_{\tilde{L}_{T}^{1}(\dot{B}_{p,1}^{N/p+1})}
\|\delta v\|_{\tilde{L}_{T}^{1}(\dot{B}_{p,1}^{N/p+1})},
\end{align*}
\begin{align*}
\|I_{4}(v^{2}) - I_{4}(v^{1})\|_{\tilde{L}_{T}^{1}(\dot{B}_{p,1}^{N/p})} \leq & C (1+ \|a_{0}\|_{\dot{B}_{p,1}^{N/p}}) T
\|\delta v\|_{\tilde{L}_{T}^{1}(\dot{B}_{p,1}^{N/p+1})}.
\end{align*}
Now, we analyze the different terms carefully. Since
\begin{align*}
I_{5}(v^{2},b^{2}) - I_{5}(v^{1},b^{1}) = & (\mathrm{adj}(DX_{v^{2}})-\mathrm{adj}(DX_{v^{1}}))b^{2}(b^{2})^{T}   \\
& + \mathrm{adj}(DX_{v^{1}})(b^{2}-b^{1})(b^{2})^{T}    \\
& + \mathrm{adj}(DX_{v^{1}})b^{1}(b^{2}-b^{1})^{T},
\end{align*}
we have
\begin{align*}
\|I_{5}(v^{2},b^{2})-I_{5}(v^{1},b^{1})\|_{\tilde{L}_{T}^{1}(\dot{B}_{p,1}^{N/p})} \leq &
C \|\delta v\|_{\tilde{L}_{T}^{1}(\dot{B}_{p,1}^{N/p+1})} \|b^{2}\|_{\tilde{L}_{T}^{2}(\dot{B}_{p,1}^{N/p})}^{2}    \\
+ C (1 + & \|v^{1}\|_{\tilde{L}_{T}^{1}(\dot{B}_{p,1}^{N/p+1})})\|\delta b\|_{\tilde{L}_{T}^{2}(\dot{B}_{p,1}^{N/p})}
\|(b^{1},b^{2})\|_{\tilde{L}_{T}^{2}(\dot{B}_{p,1}^{N/p})}.
\end{align*}
$I_{6}$ can be estimated same as $I_{5}$. Since
\begin{align*}
I_{7}(v^{2},b^{2}) - I_{7}(v^{1},b^{1}) = (J_{v^{1}}-J_{v^{2}})\partial_{t}b^{2}
+ (1-J_{v^{1}})(\partial_{t}b^{2}-\partial_{t}b^{1}),
\end{align*}
we know that
\begin{align*}
\|I_{7}(v^{2},b^{2})-I_{7}(v^{1},b^{1})\|_{\tilde{L}_{T}^{1}(\dot{B}_{p,1}^{N/p-1})} \leq &
C \|J_{v^{2}}-J_{v^{1}}\|_{\tilde{L}_{T}^{\infty}(\dot{B}_{p,1}^{N/p})}\|\partial_{t}b^{2}\|_{\tilde{L}_{T}^{1}(\dot{B}_{p,1}^{N/p-1})} \\
& + C \|J_{v^{1}}-1\|_{\tilde{L}_{T}^{\infty}(\dot{B}_{p,1}^{N/p})} \|\partial_{t}\delta b\|_{\tilde{L}_{T}^{1}(\dot{B}_{p,1}^{N/p-1})} \\
\leq & C \|\partial_{t}b^{2}\|_{\tilde{L}_{T}^{1}(\dot{B}_{p,1}^{N/p-1})}\|\delta v\|_{\tilde{L}_{T}^{1}(\dot{B}_{p,1}^{N/p+1})}    \\
& + C \|v^{1}\|_{\tilde{L}_{T}^{1}(\dot{B}_{p,1}^{N/p})} \|\partial_{t}\delta b\|_{\tilde{L}_{T}^{1}(\dot{B}_{p,1}^{N/p-1})}.
\end{align*}
After some calculations, we have
\begin{align*}
I_{8}(v^{2},b^{2}) - I_{8}(v^{1},b^{1}) = & \nu (\mathrm{adj}(DX_{v^{2}})-\mathrm{adj}(DX_{v^{1}}))A_{v^{2}}^{T}\nabla b^{2}  \\
& + \nu (\mathrm{adj}(DX_{v^{1}})-Id)(A_{v^{2}}^{T}-A_{v^{1}}^{T})\nabla b^{2}  \\
& + \nu (\mathrm{adj}(DX_{v^{1}})-Id)A_{v^{1}}^{T}\nabla (b^{2}-b^{1}),
\end{align*}
so we can get
\begin{align*}
& \|I_{8}(v^{2},b^{2}) - I_{8}(v^{1},b^{1})\|_{\tilde{L}_{T}^{1}(\dot{B}_{p,1}^{N/p})} \\
\leq & C \|\delta v\|_{\tilde{L}_{T}^{1}(\dot{B}_{p,1}^{N/p+1})}
(1+\|v^{2}\|_{\tilde{L}_{T}^{1}(\dot{B}_{p,1}^{N/p+1})})\|b^{2}\|_{\tilde{L}_{T}^{1}(\dot{B}_{p,1}^{N/p+1})}    \\
& + C \|v^{1}\|_{\tilde{L}_{T}^{1}(\dot{B}_{p,1}^{N/p+1})}\|\delta v\|_{\tilde{L}_{T}^{1}(\dot{B}_{p,1}^{N/p+1})}\|b^{2}\|_{\tilde{L}_{T}^{1}(\dot{B}_{p,1}^{N/p+1})} \\
& + C \|v^{1}\|_{\tilde{L}_{T}^{1}(\dot{B}_{p,1}^{N/p+1})} (1 + \|v^{1}\|_{\tilde{L}_{T}^{1}(\dot{B}_{p,1}^{N/p+1})}) \|\delta b\|_{\tilde{L}_{T}^{1}(\dot{B}_{p,1}^{N/p+1})}.
\end{align*}
Due to
\begin{align*}
I_{9}(v^{2},b^{2})-I_{9}(v^{1},b^{1}) = \nu (A_{v^{2}}^{T}-A_{v^{1}}^{T})\nabla b^{2} + \nu (A_{v^{1}}^{T}-Id)\nabla \delta b,
\end{align*}
we have
\begin{align*}
\|I_{9}(v^{2},b^{2})-I_{9}(v^{1},b^{1})\|_{\tilde{L}_{T}^{1}(\dot{B}_{p,1}^{N/p})} \leq &
C \|\delta v\|_{\tilde{L}_{T}^{1}(\dot{B}_{p,1}^{N/p+1})} \|b^{2}\|_{\tilde{L}_{T}^{1}(\dot{B}_{p,1}^{N/p})}    \\
& + C \|v^{1}\|_{\tilde{L}_{T}^{1}(\dot{B}_{p,1}^{N/p+1})} \|\delta b\|_{\tilde{L}_{T}^{1}(\dot{B}_{p,1}^{N/p+1})}.
\end{align*}
Since
\begin{align*}
I_{10}(v^{2},b^{2},v^{2}) - I_{10}(v^{1},b^{1},v^{1})
= & (\mathrm{adj}(\nabla X_{v^{2}})-\mathrm{adj}(\nabla X_{v^{1}}))b^{2}(v^{2})^{T} \\
& + \mathrm{adj}(\nabla X_{v^{1}})(b^{2}-b^{1})(v^{2})^{T}      \\
& + \mathrm{adj}(\nabla X_{v^{1}})b^{1}(v^{2}-v^{1})^{T},
\end{align*}
we easily have
\begin{align*}
& \|I_{10}(v^{2},b^{2},v^{2}) - I_{10}(v^{1},b^{1},v^{1})\|_{\tilde{L}_{T}^{1}(\dot{B}_{p,1}^{N/p})}    \\
\leq & C
\|\delta v\|_{\tilde{L}_{T}^{1}(\dot{B}_{p,1}^{N/p+1})} \|b^{2}\|_{\tilde{L}_{T}^{2}(\dot{B}_{p,1}^{N/p})} \|v^{2}\|_{\tilde{L}_{T}^{2}(\dot{B}_{p,1}^{N/p})} \\
& + C (1+\|v^{1}\|_{\tilde{L}_{T}^{1}(\dot{B}_{p,1}^{N/p+1})})\|\delta b\|_{\tilde{L}_{T}^{2}(\dot{B}_{p,1}^{N/p})}\|v^{2}\|_{\tilde{L}_{T}^{2}(\dot{B}_{p,1}^{N/p})} \\
& + C (1+\|v^{1}\|_{\tilde{L}_{T}^{1}(\dot{B}_{p,1}^{N/p+1})})\|b^{1}\|_{\tilde{L}_{T}^{2}(\dot{B}_{p,1}^{N/p})}\|\delta v\|_{\tilde{L}_{T}^{2}(\dot{B}_{p,1}^{N/p})}.
\end{align*}
The term $\|I_{11}(v^{2},b^{2},v^{2}) - I_{11}(v^{1},b^{1},v^{1})\|_{\tilde{L}_{T}^{1}(\dot{B}_{p,1}^{N/p})}$ can be
estimated similar to the above terms,
so we not write the details.
Combining all the above estimates, we finally arrive at
\begin{align*}
\|\delta \tilde{u}\|_{E_{p}(T)} \leq & C (1+\|a_{0}\|_{\dot{B}_{p,1}^{N/p}})^{2}\Big\{ (T + \|(v^{1},v^{2})\|_{\tilde{L}_{T}^{1}(\dot{B}_{p,1}^{N/p+1})}    \\
& + \|\partial_{t}v^{2}\|_{\tilde{L}_{T}^{1}(\dot{B}_{p,1}^{N/p})} + \|b^{2}\|_{\tilde{L}_{T}^{2}(\dot{B}_{p,1}^{N/p})}^{2} )
\|\delta v\|_{\tilde{L}_{T}^{1}(\dot{B}_{p,1}^{N/p})}     \\
& + \|v^{1}\|_{\tilde{L}_{T}^{1}(\dot{B}_{p,1}^{N/p+1})}\|\partial_{t}\delta v\|_{\tilde{L}_{T}^{1}(\dot{B}_{p,1}^{N/p-1})}
+ \|(b^{1},b^{2})\|_{\tilde{L}_{T}^{2}(\dot{B}_{p,1}^{N/p})}\|\delta b\|_{\tilde{L}_{T}^{2}(\dot{B}_{p,1}^{N/p})}
\Big\},
\end{align*}
and
\begin{align*}
\|\delta \tilde{B}\|_{E_{p}(T)} \leq & C \Big\{ (\|\partial_{t}b^{2}\|_{\tilde{L}_{T}^{1}(\dot{B}_{p,1}^{N/p-1})} + \|b^{2}\|_{\tilde{L}_{T}^{1}(\dot{B}_{p,1}^{N/p+1})} + \|v^{1}\|_{\tilde{L}_{T}^{1}(\dot{B}_{p,1}^{N/p+1})}\|b^{2}\|_{\tilde{L}_{T}^{1}(\dot{B}_{p,1}^{N/p+1})}   \\
& + \|v^{2}\|_{\tilde{L}_{T}^{2}(\dot{B}_{p,1}^{N/p})}\|b^{2}\|_{\tilde{L}_{T}^{2}(\dot{B}_{p,1}^{N/p})} ) \|\delta v\|_{\tilde{L}_{T}^{1}(\dot{B}_{p,1}^{N/p+1})}    \\
& + \|b^{1}\|_{\tilde{L}_{T}^{2}(\dot{B}_{p,1}^{N/p})}\|\delta v\|_{\tilde{L}_{T}^{2}(\dot{B}_{p,1}^{N/p})}
+ \|v^{1}\|_{\tilde{L}_{T}^{1}(\dot{B}_{p,1}^{N/p+1})}\|\partial_{t}\delta b\|_{\tilde{L}_{T}^{1}(\dot{B}_{p,1}^{N/p-1})}  \\
& + \|v^{1}\|_{\tilde{L}_{T}^{1}(\dot{B}_{p,1}^{N/p+1})} \|\delta b\|_{\tilde{L}_{T}^{1}(\dot{B}_{p,1}^{N/p+1})}
+ \|v^{2}\|_{\tilde{L}_{T}^{2}(\dot{B}_{p,1}^{N/p})}\|\delta b\|_{\tilde{L}_{T}^{2}(\dot{B}_{p,1}^{N/p})}
\Big\}.
\end{align*}
Through some simple calculations, using conditions (\ref{timebound1}) with may be lager constant $C$, we have
\begin{align*}
\|\delta \tilde{u}\|_{E_{p}(T)} \leq C (1+\|a_{0}\|_{\dot{B}_{p,1}^{N/p}})^{2}R(\|\delta v\|_{E_{p}(T)}+\|\delta b\|_{E_{p}(T)}),
\end{align*}
and
\begin{align*}
\|\delta\tilde{B}\|_{E_{p}(T)} \leq C R (\|\delta v\|_{E_{p}(T)} + \|\delta b\|_{E_{p}(T)}).
\end{align*}
Hence, we get
\begin{align*}
\|(\delta\tilde{u},\delta\tilde{B})\|_{E_{p}(T)} \leq C (1+\|a_{0}\|_{\dot{B}_{p,1}^{N/p}})^{2}R\|(\delta v, \delta b)\|_{E_{p}(T)}.
\end{align*}
Due to conditions (\ref{timebound1}), we finally get
\begin{align}\label{lipschitz half}
\|(\delta\tilde{u},\delta\tilde{B})\|_{E_{p}(T)} \leq \frac{1}{2} \|(\delta v,\delta b)\|_{E_{p}(T)}.
\end{align}
Now, by contraction mapping theorem, we know that $\mathbf{\Phi}$ admits a unique fixed point in $\bar{B}_{E_{p}(T)}(u_{L},B_{L};R)$.

$\mathbf{Step \,\, 3:Regularity\,\,\, of\,\,\, the\,\,\, density.}$
Denoting $a = \rho - 1$, and we already know that $\rho = J_{u}^{-1}\rho_{0}$.
So we have
\begin{align*}
a = (J_{u}^{-1}-1)a_{0} + a_{0} + (J_{u}^{-1}-1).
\end{align*}
Due to $u \in L^{1}(0,T;\dot{B}_{p,1}^{N/p+1})$, we know that $J_{u}^{-1}(t)-1 \in C([0,T];\dot{B}_{p,1}^{N/p})$.
So we know that $a$ belongs to $C([0,T];\dot{B}_{p,1}^{N/p})$.
If
\begin{align}\label{condition for R}
R \leq \frac{c_{0}}{4(1+\|a_{0}\|_{\dot{B}_{p,1}^{N/p}})},
\end{align}
we have
\begin{align*}
\rho(t) & \geq (1+a_{0}) - (1+\|a_{0}\|_{L^{\infty}})(\|u_{L}\|_{\tilde{L}_{T}^{1}(\dot{B}_{p,1}^{N/p})}+R)   \\
& \geq (1+a_{0}) - 2(1+\|a_{0}\|_{\dot{B}_{p,1}^{N/p}})R  \\
& > 0,
\end{align*}
where $c_{0}$ defined as in Theorem \ref{mainexistence} and $t \in [0,T]$.

$\mathbf{Step \,\, 4:Uniqueness\,\,\, and\,\,\, continuity\,\,\, of\,\,\, the\,\,\, flow\,\,\, map.}$
Consider two couples of initial data $(\rho_{0}^{1},u_{0}^{1},B_{0}^{1})$ and $(\rho_{0}^{2},u_{0}^{2},B_{0}^{2})$.
Define $\delta \rho = \rho^{2} - \rho^{1}$, $\delta u = u^{2} - u^{1}$ and $\delta B = B^{2} - B^{1}$,
we can easily find the equations for $\delta u$ and $\delta B$. Using similar methods as Navier-Stokes system and estimate
the terms containing $B$ as in Step 3, we can finish this part. For there are no new gradients, we omit the details.

At this stage, by same reasons as in \cite{LagrangianCompressible}, we can get the local well-posedness results for our MHD system.

$\mathbf{Step \,\, 5:Lower\,\,\, bound\,\,\, of\,\,\, the\,\,\, existence\,\,\, time.}$
Taking $m$ to be a large constant fixed as before and $C$ to be a large enough constant, let
\begin{align}
\begin{split}
& \eta \leq \frac{1}{C}, \quad C_{\rho_{0},m}T \leq \log 2, \quad (1+\|a_{0}\|_{\dot{B}_{p,1}^{N/p}})^{2} R \leq \eta, \quad T \leq R^{2},    \\
& \|a_{0}\|_{\dot{B}_{p,1}^{N/p}}\|u_{0}\|_{\dot{B}_{p,1}^{N/p-1}} \leq R^{2}, \quad \|u_{0}\|_{\dot{B}_{p,1}^{N/p-1}} \leq R,
\quad \|B_{0}\|_{\dot{B}_{p,1}^{N/p-1}}\leq R,
\end{split}
\end{align}
then the conditions (\ref{timebound1}) and (\ref{condition for R}) are satisfied.
From the above conditions, we know that
\begin{align}\label{lower bound of T 1}
T \leq \frac{1}{C^{2}(1+\|a_{0}\|_{\dot{B}_{p,1}^{N/p}})^{4}}, \quad T \leq \frac{\log 2}{C_{\rho_{0},m}}.
\end{align}
Next, we calculate $C_{\rho_{0},m}$ carefully as follows
\begin{align}\label{explicit form of C rho m}
\begin{split}
C_{\rho_{0},m} & \leq C \int_{0}^{t} \left\|S_{m}\nabla\left( \frac{1}{\rho_{0}} \right)\right\|_{\dot{B}_{p,1}^{N/p}}^{2} \, d\tau   \\
& \leq C \int_{0}^{t}\|S_{m}\nabla\rho_{0}\|_{\dot{B}_{p,1}^{N/p}}^{2}\,d\tau     \\
& \leq C T 2^{2m} \|a_{0}\|_{\dot{B}_{p,1}^{N/p}}^{2}
\end{split}
\end{align}
Combining (\ref{lower bound of T 1}) and (\ref{explicit form of C rho m}), we know that we can take $T$ as
\begin{align*}
T = \frac{\bar{c}}{(1+\|a_{0}\|_{\dot{B}_{p,1}^{N/p}})^{4}},
\end{align*}
where $\bar{c}$ is a small enough positive constant.
At this point, we finished our proof of Theorem \ref{local well-posedness}.

Next, let us go to the second part of this section to
prove the solution in Theorem \ref{local well-posedness} can propagate the smoothness of the initial data.

Before our proof, we need to introduce some notations which can also be found in \cite{NS2013Zhifei,localMiao2010}.
Define a weight function $\{ \omega_{k}(t) \}_{k \in \mathds{Z}}$ as follows
\begin{align*}
\omega_{k}(t) = \sum_{\ell \geq k} 2^{k - \ell} (1 - e^{-c 2^{2j}t})^{1/2}, \quad k \in \mathds{Z},
\end{align*}
where $c$ is a positive constant.
We easily know that for any $k \in \mathds{Z}$,
\begin{align}
\begin{split}
& \omega_{k}(t) \leq 2, \quad \omega_{k}(t) \sim \omega_{k'}(t) \quad \text{if} \,\, k \sim k', \\
& \omega_{k}(t) \leq 2^{k-k'}\omega_{k'}(t) \quad \text{if} \,\, k \geq k', \quad \omega_{k}(t) \leq 3 \omega_{k'}(t) \quad \text{if}\,\, k \leq k'.
\end{split}
\end{align}
Now, we can introduce the following weighted Besov space.
\begin{definition}
Let $s \in \mathds{R}$, $1\leq p,r\leq +\infty$, $0 < T < +\infty$. The weighted Besov space $\dot{B}_{p,r}^{s}(\omega)$ is defined by
\begin{align*}
\dot{B}_{p,r}^{s}(\omega) = \left\{ f \in \mathcal{S}_{h}' \, : \, \|f\|_{\dot{B}_{p,r}^{s}(\omega)} < +\infty \right\},
\end{align*}
where
\begin{align*}
\|f\|_{\dot{B}_{p,r}^{s}(\omega)} := \left\| \left( 2^{ks} \omega_{k}(T) \|\Delta_{k}f\|_{L^{p}} \right)_{k} \right\|.
\end{align*}
\end{definition}
Obviously, $\dot{B}_{p,r}^{s} \subset \dot{B}_{p,r}^{s}(\omega)$ and
\begin{align*}
\|f\|_{\dot{B}_{p,r}^{s}(\omega)} \leq 2 \|f\|_{\dot{B}_{p,r}^{s}}.
\end{align*}

We also need to define the Chemin-Lerner type weighted Besov spaces as follows.
\begin{definition}
Let $s \in \mathds{R}$, $1\leq p, q, r \leq +\infty$, $0 < T \leq +\infty$. The weighted functional space $\tilde{L}_{T}^{q}(\dot{B}_{p,r}^{s}(\omega))$
is defined as the set of all the distributions $f$ satisfying
\begin{align*}
\|f\|_{\tilde{L}_{T}^{q}(\dot{B}_{p,r}^{s}(\omega))} := \left\| \left( 2^{ks}\omega_{k}(T)\|\Delta_{k}f(t)\|_{L^{q}(0,T;L^{p})} \right)_{k} \right\|_{\ell^{r}}.
\end{align*}
\end{definition}

After this short introduction about weighted Besov space, we deduce some properties for the solution we obtained in the first part of this section.
Obviously, we have
\begin{align}\label{rho1}
\frac{c_{0}}{2} \leq \rho(t,x) \leq 2 c_{0}^{-1}.
\end{align}
Since
\begin{align*}
\|\tilde{u}\|_{\tilde{L}_{T}^{\infty}(\dot{B}_{p}^{N/p-1})} + \|\tilde{u}\|_{\tilde{L}_{T}^{1}(\dot{B}_{p}^{N/p+1})} \leq R \leq \frac{c_{3}}{(1+\|a_{0}\|_{\dot{B}_{p}^{N/p}})^{2}},
\end{align*}
and
\begin{align*}
\|u_{L}\|_{\tilde{L}_{T}^{\infty}(\dot{B}_{p}^{N/p-1})} + \|u_{L}\|_{\tilde{L}_{T}^{1}(\dot{B}_{p}^{N/p+1})} \leq \|u_{0}\|_{\dot{B}_{p}^{N/p-1}}
\leq \frac{c_{2}}{(1+\|a_{0}\|_{\dot{B}_{p}^{N/p}})^{5}},
\end{align*}
we know that
\begin{align}\label{u1}
\|u\|_{\tilde{L}_{T}^{\infty}(\dot{B}_{p}^{N/p-1})} + \|u\|_{\tilde{L}_{T}^{1}(\dot{B}_{p}^{N/p+1})} \leq \frac{c_{2}+c_{3}}{(1+\|a_{0}\|_{\dot{B}_{p}^{N/p}})^{5}}.
\end{align}
For the magnetic field, we have
\begin{align}\label{B1}
\|B\|_{\tilde{L}_{T}^{\infty}(\dot{B}_{p}^{N/p-1})} + \|B\|_{\tilde{L}_{T}^{1}(\dot{B}_{p}^{N/p+1})} \leq R + \|B_{0}\|_{\dot{B}_{p}^{N/p}}
\leq \frac{2c_{3}}{(1+\|a_{0}\|_{\dot{B}_{p}^{N/p}})^{2}}.
\end{align}
For the density, we know that
\begin{align*}
\|a\|_{\tilde{L}_{T}^{\infty}(\dot{B}_{p}^{N/p})} \leq \|D u\|_{\tilde{L}_{T}^{1}(\dot{B}_{p}^{N/p})}\|a_{0}\|_{\dot{B}_{p}^{N/p}}
+ \|a_{0}\|_{\dot{B}_{p}^{N/p}} + \|D u\|_{\tilde{L}_{T}^{1}(\dot{B}_{p}^{N/p})}.
\end{align*}
Hence, by some simple calculations, we obtain
\begin{align}\label{a1}
\|a\|_{\tilde{L}_{T}^{\infty}(\dot{B}_{p}^{N/p})} \leq 2 \|a_{0}\|_{\dot{B}_{p}^{N/p}} + \frac{c_{2}+c_{3}}{(1+\|a_{0}\|_{\dot{B}_{p}^{N/p}})^{5}}.
\end{align}
So combining (\ref{rho1}), (\ref{u1}), (\ref{B1}) and (\ref{a1}), for dimension $N = 3$, we know that the solution satisfies
\begin{align}\label{summary}
\begin{split}
& \frac{c_{0}}{2} \leq \rho(t,x) \leq 2 c_{0}^{-1}, \\
& \|a\|_{\tilde{L}_{T}^{\infty}(\dot{B}_{p}^{3/p})} \leq 2 \|a_{0}\|_{\dot{B}_{p}^{3/p}} + \frac{c_{2}+c_{3}}{(1+\|a_{0}\|_{\dot{B}_{p}^{3/p}})^{5}}, \\
& \|u\|_{\tilde{L}_{T}^{\infty}(\dot{B}_{p}^{3/p-1})} + \|u\|_{\tilde{L}_{T}^{1}(\dot{B}_{p}^{3/p+1})}
\leq \frac{c_{2}+c_{3}}{(1+\|a_{0}\|_{\dot{B}_{p}^{3/p}})^{5}},   \\
& \|B\|_{\tilde{L}_{T}^{\infty}(\dot{B}_{p}^{3/p-1})} + \|B\|_{\tilde{L}_{T}^{1}(\dot{B}_{p}^{3/p+1})} \leq \frac{2 c_{3}}{(1+\|a_{0}\|_{\dot{B}_{p}^{3/p}})^{2}}.
\end{split}
\end{align}

At this stage, we estimate $\|a\|_{\tilde{L}_{T}^{\infty}(\dot{B}_{p}^{3/p}(\omega))}$ carefully.
As in \cite{NS2013Zhifei}, we have
\begin{align*}
\sum_{j > M}2^{j\frac{3}{p}}\|\Delta_{j}a_{0}\|_{L^{p}} \leq C 2^{-\frac{1}{2}M}\|a_{0}\|_{H^{2}},
\end{align*}
and
\begin{align*}
\sum_{j \leq M}2^{j\frac{3}{p}}\omega_{j}(T)\|\Delta_{j}a_{0}\|_{L^{p}} \leq C 2^{2M} T^{\frac{1}{2}} \|a_{0}\|_{H^{2}}
+ C 2^{-\frac{1}{2}M} \|a_{0}\|_{H^{2}}.
\end{align*}
So if we take $M$ large enough and
\begin{align}\label{the form T}
T = \frac{c}{(1+\|a_{0}\|_{\dot{B}_{p}^{3/p}})^{4}(1+\|a_{0}\|_{H^{2}})^{12}},
\end{align}
we have
\begin{align}\label{bound of a omega}
\|a_{0}\|_{\dot{B}_{p}^{3/p}(\omega)} \leq c',
\end{align}
where $c$, $c'$ are small enough constants.
Similar to (\ref{a1}), we also have
\begin{align}\label{a1 weight}
\|a\|_{\tilde{L}_{T}^{\infty}(\dot{B}_{p}^{3/p}(\omega))} \leq \|a_{0}\|_{\dot{B}_{p}^{3/p}(\omega)} + \frac{c_{2}+c_{3}}{(1+ \|a_{0}\|_{\dot{B}_{p}^{3/p}})^{4}}.
\end{align}

In the following proposition, we show that this solution allows us to propagate the regularity of the initial data in Sobolev space
with low regularity.
\begin{proposition}\label{lowp}
Let $p \in (3,6)$ and $1-\frac{3}{p} < \delta < \frac{3}{p}$. Assume that $(\rho, u, B)$ is a solution for system (\ref{origianl MHD}).
If $(a_{0}, u_{0}, B_{0}) \in H^{1-\delta} \times \dot{H}^{-\delta} \times \dot{H}^{-\delta}$, then
\begin{align}\label{low pro}
\begin{split}
& \|a\|_{\tilde{L}_{T}^{\infty}(\dot{B}_{2,2}^{1-\delta})} \leq C \left( \|a_{0}\|_{H^{1-\delta}} + \|u_{0}\|_{\dot{H}^{-\delta}}
+ \|B_{0}\|_{\dot{H}^{-\delta}} \right),  \\
& \|(u,B)\|_{\tilde{L}_{T}^{\infty}(\dot{B}_{2,2}^{-\delta})} + \|(u,B)\|_{\tilde{L}_{T}^{1}(\dot{B}_{2,2}^{2-\delta})} \\
& \quad \leq C \left( 1 + \|a_{0}\|_{\dot{B}_{p}^{3/p}} \right)\left( \|(u_{0},B_{0})\|_{\dot{H}^{-\delta}} + TP_{+}\|a_{0}\|_{H^{1-\delta}} \right).
\end{split}
\end{align}
\end{proposition}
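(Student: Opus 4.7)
The plan is to run a Littlewood--Paley energy argument on the three equations simultaneously, using transport estimates for the density equation and maximal $L^1_T$--smoothing for the velocity and magnetic field equations. The regularity indices $-\delta$ and $1-\delta$ for $(u,B)$ and $a$ are chosen so that paraproduct laws between $\dot{B}_{p,1}^{3/p}$ (where the ``highly oscillating'' data live) and $\dot{B}_{2,2}^{s}$ (the low-regularity scale being propagated) close: this requires exactly $s\in(-\delta,\,2-\delta)$ with $\delta\in(1-\tfrac{3}{p},\tfrac{3}{p})$, so that both endpoints lie in the admissible range. All nonlinear terms will then be absorbed by the smallness assumptions encapsulated in \eqref{summary}.

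\textbf{Step 1: transport estimate for $a$.} From the continuity equation I rewrite
\[
\partial_t a + u\cdot\nabla a = -(1+a)\,\mathrm{div}\,u.
\]
Applying $\Delta_q$, using the standard commutator estimate in $\dot{B}_{2,2}^{1-\delta}$, and integrating in time I obtain
\[
\|a\|_{\tilde{L}^\infty_T(\dot{B}_{2,2}^{1-\delta})}
\le e^{C\int_0^T \|\nabla u\|_{\dot{B}_{p,1}^{3/p}}d\tau}\!
\left(\|a_0\|_{\dot{B}_{2,2}^{1-\delta}} + \|(1+a)\mathrm{div}\,u\|_{L^1_T(\dot{B}_{2,2}^{1-\delta})}\right).
\]
The exponential factor is $O(1)$ by \eqref{summary}; the product $(1+a)\mathrm{div}\,u$ is handled by the paraproduct law $\dot{B}_{p,1}^{3/p}\times \dot{B}_{2,2}^{1-\delta}\to \dot{B}_{2,2}^{1-\delta}$, valid under the $\delta$-range assumption, yielding a bound by $(1+\|a\|_{\tilde L^\infty_T(\dot B_{p,1}^{3/p})})\|u\|_{\tilde L^1_T(\dot B_{2,2}^{2-\delta})}$. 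This is harmless once $u$ is controlled in Step 2.

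\textbf{Step 2: heat smoothing for $u$ and $B$.} Writing the momentum equation in the form
\[
\partial_t u - \tfrac{1}{\rho}\bigl(\mu\Delta u+(\lambda+\mu)\nabla\mathrm{div}\,u\bigr)
= -u\cdot\nabla u - \tfrac{1}{\rho}\nabla P(\rho) + \tfrac{1}{\rho}\bigl(B\cdot\nabla B - \tfrac12\nabla|B|^2\bigr),
\]
and applying Lemma \ref{variable heat eq} in $\dot{B}_{2,2}^{-\delta}$ (the variable-coefficient heat estimate with coefficient $1/\rho$ controlled by \eqref{summary}), I get
\[
\|u\|_{\tilde L^\infty_T(\dot B_{2,2}^{-\delta})}+\|u\|_{\tilde L^1_T(\dot B_{2,2}^{2-\delta})}
\lesssim (1+\|a_0\|_{\dot B_{p,1}^{3/p}})\bigl(\|u_0\|_{\dot H^{-\delta}}+\text{RHS}\bigr).
\]
The $\nabla P(\rho)$ term I decompose as $P'(\bar\rho)\nabla a + \nabla(P(\rho)-P(\bar\rho)-P'(\bar\rho)a)$; the linear piece contributes $TP_+\|a_0\|_{H^{1-\delta}}$ after integrating in time (using Step 1), while the quadratic remainder is absorbed via composition estimates. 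The convective and magnetic nonlinearities are bilinear and are bounded by products of $\|\cdot\|_{\tilde L^1_T(\dot B_{p,1}^{3/p+1})}$ or $\|\cdot\|_{\tilde L^2_T(\dot B_{p,1}^{3/p})}$ norms (which are $O(1/(1+\|a_0\|)^k)$ by \eqref{summary}) times the desired $\tilde L^1_T(\dot B_{2,2}^{2-\delta})$ norm of $u$ or $B$, so they can be absorbed into the left-hand side. The magnetic equation $\partial_t B - \nu\Delta B = B\cdot\nabla u - u\cdot\nabla B - (\mathrm{div}\,u)B$ is treated analogously with Lemma \ref{heat eq} (constant-coefficient heat).

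\textbf{Step 3: closing the estimates and main obstacle.} Adding the bounds for $u$ and $B$, using the smallness in \eqref{summary} to absorb all terms that carry a nonlinear norm of $(u,B)$ back into the left-hand side, and then feeding the resulting bound into Step 1 gives the stated inequalities. The main technical obstacle is the paraproduct bookkeeping for products of the form $a\cdot\partial u$ and $B\cdot\nabla B$ at regularity $-\delta$: one needs the combined product law $\dot B_{p,1}^{3/p}\times \dot B_{2,2}^{s}\hookrightarrow \dot B_{2,2}^{s}$ for $s\in\{-\delta,1-\delta,2-\delta\}$, which constrains $\delta$ to lie strictly in $(1-\tfrac3p,\tfrac3p)$ precisely as stated; the upper endpoint $\delta<\tfrac3p$ is needed for the $\nabla P$ and $B\cdot\nabla B$ terms, and the lower endpoint $\delta>1-\tfrac3p$ is needed so that $u\cdot\nabla u$ lies in $\dot B_{2,2}^{-\delta}$ after integrating in time. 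Everything else is bookkeeping.
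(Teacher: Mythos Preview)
Your overall three-step structure (transport for $a$, heat smoothing for $u$ and $B$, then close) matches the paper, and your product-law bookkeeping on the bilinear terms $u\cdot\nabla u$, $B\cdot\nabla B$, $u\cdot\nabla B$, $B\cdot\nabla u$ is exactly how the paper handles them (these are absorbed because $\|u\|_{\tilde L^2_T(\dot B_p^{3/p})}$ and $\|B\|_{\tilde L^2_T(\dot B_p^{3/p})}$ are small by \eqref{summary}). The treatment of the magnetic equation via Lemma~\ref{heat eq} is also the same as the paper.

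There is, however, a genuine gap in your Step~2. You invoke Lemma~\ref{variable heat eq} in $\dot B_{2,2}^{-\delta}$ with ``coefficient $1/\rho$ controlled by \eqref{summary}''. But \eqref{summary} only gives $\|a\|_{\tilde L^\infty_T(\dot B_p^{3/p})}\le 2\|a_0\|_{\dot B_p^{3/p}}+\text{small}$, which is \emph{not} small. When you run any maximal regularity or localized energy argument for the momentum equation, the variable coefficient $1/\rho$ produces commutator contributions of the type $\|a\|_{\tilde L^\infty_T(\dot B_p^{3/p})}\,\|u\|_{\tilde L^1_T(\dot B_{2,2}^{2-\delta})}$, and without smallness of the first factor you cannot absorb this into the left-hand side. (Also, Lemma~\ref{variable heat eq} as stated uses a single Lebesgue index $p$ for both the coefficient space $\dot B_p^{n/p}$ and the solution space $\dot B_p^s$; applying it verbatim with the solution in $\dot B_{2,2}^{-\delta}$ but the coefficient only in $\dot B_p^{3/p}$ for $p\in(3,6)$ is not justified.)

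The paper resolves exactly this point with the \emph{weighted} Besov space $\dot B_{p}^{3/p}(\omega)$ introduced before the proposition: by \eqref{bound of a omega}--\eqref{a1 weight} one has $\|a\|_{\tilde L^\infty_T(\dot B_p^{3/p}(\omega))}$ small for $T$ chosen as in \eqref{the form T}. The paper then applies $\Delta_j$ to the momentum equation, does an $L^2$ energy estimate, and bounds the commutators $[\Delta_j,\bar\mu]\nabla u$, $[\Delta_j,\bar\lambda]\mathrm{div}\,u$ and the $G$-term in the \emph{weighted} norm (see \eqref{same ns 1}--\eqref{same ns 2}), obtaining the smoothing bound \eqref{estimate weight 2} with the absorbable factor $\|a\|_{\tilde L^\infty_T(\dot B_p^{3/p}(\omega))}$. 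Only afterwards is the unweighted $\tilde L^\infty_T$ bound \eqref{estimate 2} derived, and there the large factor $\|a\|_{\tilde L^\infty_T(\dot B_p^{3/p})}$ multiplies the already-controlled $\|u\|_{\tilde L^1_T(\dot B_{2,2}^{2-\delta})}$, which is why the final estimate carries the prefactor $(1+\|a_0\|_{\dot B_p^{3/p}})$. This two-stage weighted/unweighted splitting is the missing idea in your proposal; without it the loop does not close.
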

\begin{proof}
The proof can be divided into three steps. Set $\beta = 1-\delta$. Without loss of generality, we may assume that
$c$, $c_{2}$, $c_{3}$, $c'$, $T$ (choose as in (\ref{bound of a omega}) and  (\ref{the form T})) small enough such that
\begin{align}\label{condition}
\begin{split}
& C\frac{c_{2}+c_{3}}{(1+\|a_{0}\|_{\dot{B}_{p}^{3/p}})^{5}} + 2C \left( c' + \frac{c_{2}+c_{3}}{(1+\|a_{0}\|_{\dot{B}_{p}^{3/p}})^{4}} \right)   \\
& \quad
+ C \frac{2c_{2}}{(1+\|a_{0}\|_{\dot{B}_{p}^{3/p}})} + C \frac{2c_{2}}{(1+\|a_{0}\|_{\dot{B}_{p}^{3/p}})^{2}} + C T P_{+} \leq \frac{1}{2},
\end{split}
\end{align}
where $C$ is the constant appearing in the following estimates.\\
$\mathbf{Step \,\, 1:Estimate\,\,\, for\,\,\, the\,\,\,Transport\,\,\, Equation.}$
This estimate is similar to the Navier-Stokes system, so we omit the details and only give the results as follows
\begin{align}\label{transport estimate 1}
\|a\|_{\tilde{L}_{T}^{\infty}(\dot{B}_{2,2}^{\beta})} \leq C \left( \|a_{0}\|_{\dot{B}_{2,2}^{\beta}} + \|u\|_{\tilde{L}_{T}^{1}(\dot{B}_{2,2}^{\beta+1})} \right).
\end{align}
For details, we give the reference \cite{NS2013Zhifei} on page 192. \\
$\mathbf{Step \,\, 2:Estimate\,\,\, for\,\,\, the\,\,\, Momentum\,\,\, Equation.}$
Denote $\bar{\mu} = \frac{\mu}{\rho}$ and $\bar{\lambda} = \frac{\lambda}{\rho}$, apply the operator $\Delta_{j}$ to the
momentum equation of (\ref{origianl MHD}), we can obtain
\begin{align}\label{localequationfor u}
\begin{split}
& \partial_{t}\Delta_{j}u - \mathrm{div}(\bar{\mu}\nabla\Delta_{j}u) - \nabla(\bar{\lambda}\mathrm{div}\Delta_{j}u)   \\
& \quad\quad\quad = \Delta_{j}G + \mathrm{div}([\Delta_{j},\bar{\mu}]\nabla u) + \nabla ([\Delta_{j},\bar{\lambda}]\mathrm{div}u)    \\
& \quad\quad\quad\quad\, + \Delta_{j}\left( \frac{1}{\rho}B\cdot\nabla B \right) - \frac{1}{2}\Delta_{j}\left( \frac{1}{\rho}\nabla |B|^{2} \right),
\end{split}
\end{align}
where
\begin{align*}
G = -u\cdot\nabla u - \frac{\bar{\rho}P'(\rho)}{\rho}\nabla u + \frac{\mu}{\rho^{2}}\nabla\rho \nabla u + \frac{\lambda}{\rho^{2}}\nabla\rho \mathrm{div}u.
\end{align*}
Taking $L^{2}$ energy estimate for weighted Besov space, we have
\begin{align}\label{estimate weight 1}
\begin{split}
\|u\|_{\tilde{L}_{T}^{1}(\dot{B}_{2,2}^{\beta+1})} & + \|u\|_{\tilde{L}_{T}^{2}(\dot{B}_{2,2}^{\beta})} \leq
C (\|u_{0}\|_{\dot{B}_{2,2}^{\beta-1}} + \|G\|_{\tilde{L}_{T}^{1}(\dot{B}_{2,2}^{\beta-1}(\omega))})    \\
& + C \left\| 2^{j\beta}\omega_{j}(T)\left( \|[\Delta_{j},\bar{\mu}]\nabla u\|_{L_{T}^{1}(L^{2})} + \|[\Delta_{j},\bar{\lambda}]\nabla u\|_{L_{T}^{1}(L^{2})} \right) \right\|_{\ell^{2}} \\
& + C \left\| \frac{1}{\rho}B\cdot\nabla B \right\|_{\tilde{L}_{T}^{1}(\dot{B}_{2,2}^{\beta-1})}
+ C \left\| \frac{1}{\rho}\nabla (|B|^{2}) \right\|_{\tilde{L}_{T}^{1}(\dot{B}_{2,2}^{\beta-1})}
\end{split}
\end{align}
Similar to the Navier-Stokes system, we have
\begin{align}\label{same ns 1}
\begin{split}
& \left\| 2^{j\beta}\omega_{j}(T)\left( \|[\Delta_{j},\bar{\mu}]\nabla u\|_{L_{T}^{1}(L^{2})} + \|[\Delta_{j},\bar{\mu}]\nabla u\|_{L_{T}^{1}(L^{2})} \right) \right\|_{\ell^{2}} \\
& \quad
\leq C \|a\|_{\tilde{L}_{T}^{\infty}(\dot{B}_{p}^{3/p}(\omega))} \|u\|_{\tilde{L}_{T}^{1}(\dot{B}_{2,2}^{\beta+1})}
\end{split}
\end{align}
and
\begin{align}\label{same ns 2}
\begin{split}
\|G\|_{\tilde{L}_{T}^{1}(\dot{B}_{2,2}^{\beta-1}(\omega))} \leq & C \big( \|u\|_{\tilde{L}_{T}^{2}(\dot{B}_{p}^{3/p})}\|u\|_{\tilde{L}_{T}^{2}(\dot{B}_{2,2}^{\beta})}    \\
& + \|a\|_{\tilde{L}_{T}^{\infty}(\dot{B}_{p}^{3/p}(\omega))}\|u\|_{\tilde{L}_{T}^{1}(\dot{B}_{2,2}^{\beta+1})}
+ T P_{+} \|a\|_{\tilde{L}_{T}^{\infty}(\dot{B}_{2,2}^{\beta})} \big).
\end{split}
\end{align}
Then, we estimate the term which is not appeared in the Navier-Stokes system as follows
\begin{align}\label{diff ns 1}
\begin{split}
\left\|\frac{1}{\rho}B\cdot\nabla B \right\|_{\tilde{L}_{T}^{1}(\dot{B}_{2,2}^{\beta-1})} \leq & C (1+\|a\|_{\tilde{L}_{T}^{\infty}(\dot{B}_{p}^{3/p})})\|B\cdot\nabla B\|_{\tilde{L}_{T}^{1}(\dot{B}_{2,2}^{\beta-1})}  \\
\leq & C (1+\|a\|_{\tilde{L}_{T}^{\infty}(\dot{B}_{p}^{3/p})}) \|B\|_{\tilde{L}_{T}^{2}(\dot{B}_{p}^{3/p})} \|B\|_{\tilde{L}_{T}^{2}(\dot{B}_{2,2}^{\beta})}.
\end{split}
\end{align}
Plugging estimates (\ref{same ns 1}), (\ref{same ns 2}) and (\ref{diff ns 1}) into (\ref{estimate weight 1}), we obtain
\begin{align}\label{estimate weight 2}
\begin{split}
\|u\|_{\tilde{L}_{T}^{1}(\dot{B}_{2,2}^{\beta+1})} & + \|u\|_{\tilde{L}_{T}^{2}(\dot{B}_{2,2}^{\beta})} \leq C \|u_{0}\|_{\dot{B}_{2,2}^{\beta-1}}
+ C \|u\|_{\tilde{L}_{T}^{2}(\dot{B}_{p}^{3/p})}\|u\|_{\tilde{L}_{T}^{2}(\dot{B}_{2,2}^{\beta})}    \\
& + C \|a\|_{\tilde{L}_{T}^{\infty}(\dot{B}_{p}^{3/p}(\omega))}\|u\|_{\tilde{L}_{T}^{1}(\dot{B}_{2,2}^{\beta+1})} + C T P_{+}\|a\|_{\tilde{L}_{T}^{\infty}(\dot{B}_{2,2}^{\beta})}    \\
& + C \big(1+ \|a\|_{\tilde{L}_{T}^{\infty}(\dot{B}_{p}^{3/p})} \big) \|B\|_{\tilde{L}_{T}^{2}(\dot{B}_{p}^{3/p})} \|B\|_{\tilde{L}_{T}^{2}(\dot{B}_{2,2}^{\beta})}.
\end{split}
\end{align}
Taking $L^{2}$ energy estimates for equation (\ref{localequationfor u}), we obtain
\begin{align}\label{estimate 1}
\begin{split}
\|u\|_{\tilde{L}_{T}^{\infty}(\dot{B}_{2,2}^{\beta-1})} \leq & C \big( \|u_{0}\|_{\dot{B}_{2,2}^{\beta-1}} + \|G\|_{\tilde{L}_{T}^{1}(\dot{B}_{2,2}^{\beta-1})} \big) \\
& + C \left\| 2^{j\beta} \left( \|[\Delta_{j},\bar{\mu}]\nabla u\|_{L_{T}^{1}(L^{2})} + \|[\Delta_{j},\bar{\lambda}]\nabla u\|_{L_{T}^{1}(L^{2})} \right) \right\|_{\ell^{2}} \\
& + C \left\| \frac{1}{\rho} B\cdot\nabla B \right\|_{\tilde{L}_{T}^{1}(\dot{B}_{2,2}^{\beta-1})}
+ C \left\| \frac{1}{\rho}\nabla (|B|^{2}) \right\|_{\tilde{L}_{T}^{1}(\dot{B}_{2,2}^{\beta-1})}.
\end{split}
\end{align}
The first two terms can be estimated as in the Navier-Stokes system, we only give the results as follows
\begin{align}\label{same ns 3}
\begin{split}
& \left\| 2^{j\beta}\left( \|[\Delta_{j},\bar{\mu}]\nabla u\|_{L_{T}^{1}(L^{2})} + \|[\Delta_{j},\bar{\lambda}]\nabla u\|_{L_{T}^{1}(L^{2})} \right) \right\|_{\ell^{2}}  \\
& \quad
\leq C \|a\|_{\tilde{L}_{T}^{\infty}(\dot{B}_{p}^{3/p})} \|u\|_{\tilde{L}_{T}^{1}(\dot{B}_{2,2}^{\beta + 1})},
\end{split}
\end{align}
\begin{align}\label{same ns 4}
\begin{split}
\|G\|_{\tilde{L}_{T}^{1}(\dot{B}_{2,2}^{\beta-1})} \leq & C \big( \|u\|_{\tilde{L}_{T}^{2}(\dot{B}_{p}^{3/p})}\|u\|_{\tilde{L}_{T}^{2}(\dot{B}_{2,2}^{\beta})}
+ \|a\|_{\tilde{L}_{T}^{\infty}(\dot{B}_{p}^{3/p})}\|u\|_{\tilde{L}_{T}^{1}(\dot{B}_{2,2}^{\beta+1})}   \\
& + T P_{+} \|a\|_{\tilde{L}_{T}^{\infty}(\dot{B}_{2,2}^{\beta})}\big).
\end{split}
\end{align}
For the terms not appeared in the Navier-Stokes system, we can estimate as (\ref{diff ns 1}).
Plugging (\ref{same ns 3}), (\ref{same ns 4}) and (\ref{diff ns 1}) into (\ref{estimate 1}), we have
\begin{align}\label{estimate 2}
\begin{split}
\|u\|_{\tilde{L}_{T}^{\infty}(\dot{B}_{2,2}^{\beta-1})} \leq & C \|u_{0}\|_{\dot{B}_{2,2}^{\beta-1}}
+ C \|a\|_{\tilde{L}_{T}^{\infty}(\dot{B}_{p}^{3/p})}\|u\|_{\tilde{L}_{T}^{1}(\dot{B}_{2,2}^{\beta+1})} \\
& + C \|u\|_{\tilde{L}_{T}^{2}(\dot{B}_{p}^{3/p})} \|u\|_{\tilde{L}_{T}^{2}(\dot{B}_{2,2}^{\beta})} + C T P_{+} \|a\|_{\tilde{L}_{T}^{\infty}(\dot{B}_{2,2}^{\beta})} \\
& + C (1+ \|a\|_{\tilde{L}_{T}^{\infty}(\dot{B}_{p}^{3/p})} )\|B\|_{\tilde{L}_{T}^{2}(\dot{B}_{p}^{3/p})} \|B\|_{\tilde{L}_{T}^{2}(\dot{B}_{2,2}^{\beta})}.
\end{split}
\end{align}
$\mathbf{Step \,\, 3:Estimate\,\,\, for\,\,\, the\,\,\, Magnetic\,\,\, Field\,\,\, Equation.}$
Applying the operator $\Delta_{j}$ to the third equation of system (\ref{origianl MHD}), we obtain
\begin{align*}
\partial_{t}\Delta_{j}B - \nu \Delta\Delta_{j}B = - \Delta_{j}\mathrm{div}(Bu^{T} - uB^{T}).
\end{align*}
Performing $L^{2}$ energy estimates, we could have
\begin{align}\label{magnetic besov}
\begin{split}
\|B\|_{\tilde{L}_{T}^{\infty}(\dot{B}_{2,2}^{\beta-1})} + \|B\|_{\tilde{L}_{T}^{1}(\dot{B}_{2,2}^{\beta+1})}
\leq & C \|B_{0}\|_{\dot{B}_{2,2}^{\beta-1}}    \\
& + C \|\mathrm{div}(Bu^{T} - uB^{T})\|_{\tilde{L}_{T}^{1}(\dot{B}_{2,2}^{\beta-1})}.
\end{split}
\end{align}
Since
\begin{align}\label{magnetic 1}
\begin{split}
\|u\cdot\nabla B\|_{\tilde{L}_{T}^{1}(\dot{B}_{2,2}^{\beta-1})} \leq & C \|u\|_{\tilde{L}_{T}^{2}(\dot{B}_{p}^{3/p})} \|\nabla B\|_{\tilde{L}_{T}^{2}(\dot{B}_{2,2}^{\beta-1})}   \\
\leq & C \|u\|_{\tilde{L}_{T}^{2}(\dot{B}_{p}^{3/p})} \|B\|_{\tilde{L}_{T}^{2}(\dot{B}_{2,2}^{\beta})},
\end{split}
\end{align}
and
\begin{align}\label{magnetic 2}
\begin{split}
\|B\cdot\nabla u\|_{\tilde{L}_{T}^{1}(\dot{B}_{2,2}^{\beta-1})} \leq C \|B\|_{\tilde{L}_{T}^{2}(\dot{B}_{p}^{3/p})} \|u\|_{\tilde{L}_{T}^{2}(\dot{B}_{2,2}^{\beta})},
\end{split}
\end{align}
we know that
\begin{align}\label{estimate B}
\begin{split}
\|B\|_{\tilde{L}_{T}^{\infty}(\dot{B}_{2,2}^{\beta-1})} + \|B\|_{\tilde{L}_{T}^{1}(\dot{B}_{2,2}^{\beta+1})} \leq &
C \|B_{0}\|_{\dot{B}_{2,2}^{\beta-1}} + C \|u\|_{\tilde{L}_{T}^{2}(\dot{B}_{p}^{3/p})} \|B\|_{\tilde{L}_{T}^{2}(\dot{B}_{2,2}^{\beta})}   \\
& + C \|B\|_{\tilde{L}_{T}^{2}(\dot{B}_{p}^{3/p})} \|u\|_{\tilde{L}_{T}^{2}(\dot{B}_{2,2}^{\beta})}.
\end{split}
\end{align}

Combing estimates (\ref{estimate weight 2}) and (\ref{estimate B}), we will get
\begin{align}\label{temp estimate 1}
\begin{split}
& \|u\|_{\tilde{L}_{T}^{1}(\dot{B}_{2,2}^{\beta+1})} + \|u\|_{\tilde{L}_{T}^{2}(\dot{B}_{2,2}^{\beta})} + \|B\|_{\tilde{L}_{T}^{1}(\dot{B}_{2,2}^{\beta+1})}    \\
& \quad\quad
\leq C \big( \|u_{0}\|_{\dot{B}_{2,2}^{\beta-1}} + \|B_{0}\|_{\dot{B}_{2,2}^{\beta-1}} + T P_{+} \|a\|_{\tilde{L}_{T}^{\infty}(\dot{B}_{2,2}^{\beta})} \big).
\end{split}
\end{align}
Combining estimates (\ref{estimate 2}) and (\ref{estimate B}), we will have
\begin{align}\label{temp estimate 2}
\begin{split}
& \|u\|_{\tilde{L}_{T}^{\infty}(\dot{B}_{2,2}^{\beta-1})} + \|B\|_{\tilde{L}_{T}^{\infty}(\dot{B}_{2,2}^{\beta-1})}   \\
& \quad\quad
\leq C \big( \|u_{0}\|_{\dot{B}_{2,2}^{\beta-1}} + \|B_{0}\|_{\dot{B}_{2,2}^{\beta-1}} + T P_{+} \|a\|_{\tilde{L}_{T}^{\infty}(\dot{B}_{2,2}^{\beta})} \big) \\
& \quad\quad\quad
+ C \|a_{0}\|_{\dot{B}_{p}^{3/p}} \|u\|_{\tilde{L}_{T}^{1}(\dot{B}_{2,2}^{\beta+1})}.
\end{split}
\end{align}
At this stage, combining (\ref{temp estimate 1}) and (\ref{temp estimate 2}), we obtain
\begin{align}\label{temp estimate 3}
\begin{split}
& \|u\|_{\tilde{L}_{T}^{\infty}(\dot{B}_{2,2}^{\beta-1})} + \|B\|_{\tilde{L}_{T}^{\infty}(\dot{B}_{2,2}^{\beta-1})}
+ \|u\|_{\tilde{L}_{T}^{1}(\dot{B}_{2,2}^{\beta+1})} + \|B\|_{\tilde{L}_{T}^{1}(\dot{B}_{2,2}^{\beta + 1})} \\
& \quad\quad
\leq C \big(1+ \|a_{0}\|_{\dot{B}_{p}^{3/p}}\big) \big( \|u_{0}\|_{\dot{B}_{2,2}^{\beta-1}} + \|B_{0}\|_{\dot{B}_{2,2}^{\beta-1}}
+ T P_{+} \|a\|_{\tilde{L}_{T}^{\infty}(\dot{B}_{2,2}^{\beta})} \big).
\end{split}
\end{align}
At last, combining (\ref{transport estimate 1}) and (\ref{temp estimate 3}), we finally get the desired results (\ref{low pro}).
\end{proof}

In the last part of this section, we prove the solution propagate the regularity of the initial data in Sobolev space with high regularity.
\begin{proposition}\label{highp}
Assume that $(\rho, u, B)$ is a solution of system (\ref{origianl MHD}) on $[0,T]$, which satisfies $\rho \geq c_{0}$,
\begin{align}\label{condition high}
a \in \tilde{L}_{T}^{\infty}(\dot{B}_{p}^{3/p}), \quad u,B \in \tilde{L}_{T}^{\infty}(\dot{B}_{p}^{3/p}) \cap \tilde{L}_{T}^{1}(\dot{B}_{p}^{3/p+1}),
\end{align}
where $a = \rho - 1$.
If $(a_{0}, u_{0}, B_{0}) \in H^{s} \times H^{s-1} \times H^{s-1}$ for $s \geq 3$, then we have
\begin{align}\label{high result}
\begin{split}
a \in \tilde{L}_{T}^{\infty}(H^{s}), \quad u, B \in \tilde{L}_{T}^{\infty}(H^{s-1}) \cap \tilde{L}_{T}^{1}(H^{s+1}).
\end{split}
\end{align}
\end{proposition}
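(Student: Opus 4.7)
The plan is to run simultaneous Sobolev energy estimates at level $H^s$ for $a$ and $H^{s-1}$ for $(u,B)$, treating the critical Besov bounds of (\ref{condition high}) as fixed coefficient data, and to close via a Gronwall argument. The key quantitative input from (\ref{condition high}) is that the embedding $\dot{B}_{p,1}^{3/p+1}\hookrightarrow \mathrm{Lip}$ yields $\nabla u, \nabla B \in L^1_T(L^\infty)$, together with $a\in L^\infty_T(L^\infty)$ and the uniform lower bound $\rho\geq c_0$; these provide the Lipschitz control of the convecting field and the coercivity of the parabolic operator that classical $H^s$ theory demands.

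First I would handle the density. Writing the density equation as $\partial_t a + u\cdot\nabla a = -(1+a)\,\mathrm{div}\, u$, I localize by $\Delta_j$, pair in $L^2$ with $\Delta_j a$, and sum with weights $2^{2js}$; standard commutator estimates (Bony decomposition + $H^s$ algebra for $s\geq 3>3/2$) give
\begin{equation*}
\|a\|_{L^\infty_t(H^s)} \lesssim \|a_0\|_{H^s} + \int_0^t \bigl(\|\nabla u\|_{L^\infty}\|a\|_{H^s} + (1+\|a\|_{H^s})\|u\|_{H^s}\bigr)\, d\tau.
\end{equation*}
Next, I rewrite the momentum equation as $\partial_t u - (\mu/\rho)\Delta u - ((\lambda+\mu)/\rho)\nabla\mathrm{div}\, u = F$, where $F$ collects the transport term $-u\cdot\nabla u$, the pressure $-\rho^{-1}\nabla P(\rho)$, and the magnetic source $\rho^{-1}(B\cdot\nabla B - \tfrac12\nabla|B|^2)$. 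Localizing by $\Delta_j$, taking the $L^2$ pairing with $\Delta_j u$, and summing $2^{2j(s-1)}\cdot$, the coercivity $\rho\geq c_0$ yields dissipation of the form $\|u\|_{L^2_t(H^s)}^2$, while the commutators $[\Delta_j,1/\rho]\nabla^2 u$ are handled by a Kato–Ponce / Bony-type argument using $a\in L^\infty_T(H^s)$ from Step~1. The nonlinear terms in $F$ are absorbed by the algebra property of $H^{s-1}$, the composition estimate for $P(\rho)$, and the Besov bounds on $u,B$.

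For the magnetic field I treat $\partial_t B - \nu\Delta B = -\mathrm{div}(Bu^T - uB^T)$ in an analogous fashion, localizing by $\Delta_j$ and using that $\nabla u\in L^1_T(L^\infty)$ together with $B\in L^\infty_T(H^{s-1})$ to control the coupling $B\cdot\nabla u - u\cdot\nabla B$ by product estimates at level $H^{s-1}$. Summing the three estimates yields a control on
\begin{equation*}
X(t) := \|a\|_{L^\infty_t(H^s)} + \|(u,B)\|_{L^\infty_t(H^{s-1})}^2 + \int_0^t \|(u,B)\|_{H^{s+1}}^2\, d\tau
\end{equation*}
of the form $X(t) \leq C\bigl(X(0) + \int_0^t \psi(\tau) X(\tau)\, d\tau\bigr)$, where the coefficient $\psi\in L^1(0,T)$ is built out of the Besov norms in (\ref{condition high}) and of $\|\nabla u\|_{L^\infty}$. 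Gronwall's lemma closes the bound on $[0,T]$.

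The main obstacle is the mutual coupling in the commutator estimate: handling $[\Delta_j,1/\rho]\nabla^2 u$ at level $H^{s-1}$ requires an $H^s$ bound on $a$, while the transport estimate for $a$ needs $\|u\|_{H^s}$ in the right-hand side. This is resolved by running both estimates simultaneously and absorbing the $a$-dependent contributions into $X(t)$ before invoking Gronwall, rather than bootstrapping them sequentially. A secondary difficulty, absent from the Navier–Stokes analogue, is the quadratic magnetic source $B\cdot\nabla B$ in the momentum equation and the mixed term $\mathrm{div}(Bu^T - uB^T)$ in the magnetic equation; both are tamed by the $H^{s-1}$ algebra property combined with the critical Besov smallness of $B$ provided by (\ref{condition high}), so no additional smallness hypothesis is needed.
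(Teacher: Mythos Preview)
Your Gronwall strategy has a genuine gap at the commutator step. When you localize the momentum equation and estimate $[\Delta_j,1/\rho]\nabla^2 u$ (equivalently $[\Delta_j,\bar\mu]\nabla u$) at level $s-1$, the Bony/Kato--Ponce bound produces \emph{two} contributions, schematically
\[
\|a\|_{\dot B_p^{3/p}}\,\|u\|_{H^{s}} \;+\; \|a\|_{H^{s}}\,\|u\|_{\dot B_p^{3/p+1}}.
\]
You address only the second one (the coupling with $\|a\|_{H^s}$), which is indeed absorbable into $X(t)$. The first contribution, however, is of the form (bounded but \emph{not small} constant) $\times$ (dissipation): after pairing with $\nabla\Delta_j u$ and summing, it yields $\|a\|_{\dot B_p^{3/p}}\|u\|_{H^s}^2$, matching the left-hand dissipation $c\|u\|_{H^s}^2$ up to the factor $\|a\|_{\dot B_p^{3/p}}$. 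Since (\ref{condition high}) gives only boundedness of $\|a\|_{\dot B_p^{3/p}}$, not smallness, and since $\nabla a\notin L^\infty$ (so the Lipschitz-coefficient version of the commutator estimate is unavailable), this term cannot be absorbed and cannot be cast as $\psi(t)X(t)$ with $\psi\in L^1$. Your Gronwall loop therefore does not close.

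The paper's proof is designed precisely to neutralize this term. It decomposes $[0,T]$ into finitely many subintervals $[T_i,T_{i+1}]$ and works in the \emph{weighted} space $\dot B_p^{3/p}(\omega^i)$ on each; the key property of the weight is that $\|a\|_{\tilde L^\infty_{T_i}(\dot B_p^{3/p}(\omega^i))}$ can be made small (condition (\ref{small assumption})) by shrinking $T_{i+1}-T_i$, regardless of the size of $\|a\|_{\dot B_p^{3/p}}$. With this artificial smallness the dangerous commutator term is absorbed into the left side on each subinterval (estimates (\ref{same ns 5})--(\ref{same ns 6})), yielding (\ref{final high}); one then iterates across the subintervals to reach (\ref{final in high}). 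A secondary point: the paper obtains the $\tilde L^1_T(H^{s+1})$ regularity for $(u,B)$ via the parabolic smoothing estimate (Lemma~\ref{heat eq} type), whereas your $L^2$-pairing only delivers $L^2_T(H^s)$, one derivative short of the stated conclusion.
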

\begin{proof}
Considering (\ref{condition high}), we can divide the time interval $[0,T]$ into finitely many small intervals $[T_{i},T_{i+1}]$
with $i = 0,1,\ldots,N$ such that
\begin{align}\label{small assumption}
\begin{split}
& T_{i+1} - T_{i} \leq \epsilon, \quad \|a\|_{\tilde{L}^{\infty}(T_{i},T_{i+1};\dot{B}_{p}^{3/p}(\omega^{i}))} \leq \epsilon,  \\
& \|(u,B)\|_{\tilde{L}^{1}(T_{i},T_{i+1};\dot{B}_{p}^{3/p+1})} + \|(u,B)\|_{\tilde{L}^{2}(T_{i},T_{i+1};\dot{B}_{p}^{3/p})} \leq \epsilon,
\end{split}
\end{align}
for some $\epsilon$ small enough. Here $\omega^{i} = \{\omega_{k}^{i}\}$ stands for
\begin{align*}
\omega_{k}^{i} = \sum_{\ell \geq k} 2^{k-\ell} (1-e^{-c2^{2\ell (T_{i+1}-T_{i})}})^{\frac{1}{2}}.
\end{align*}
Denote $\tilde{L}_{T_{i}}^{q}(\dot{B}_{p,r}^{s}) := \tilde{L}^{q}(T_{i},T_{i+1};\dot{B}_{p,r}^{s})$.
For the density, the estimate is similar to the Navier-Stokes equations, so we omit the details
\begin{align}\label{density high}
\|a\|_{\tilde{L}_{T_{i}}^{\infty}(\dot{B}_{2,2}^{s})} \leq C \big( \|a(T_{i})\|_{\dot{B}_{2,2}^{s}} + \|u\|_{\tilde{L}_{T_{i}}^{1}(\dot{B}_{2,2}^{s+1})} \big).
\end{align}
For the velocity field, as in the proof of Proposition \ref{lowp}, we have
\begin{align}\label{high u}
\begin{split}
& \|u\|_{\tilde{L}_{T_{i}}^{1}(\dot{B}_{2,2}^{s+1})} + \|u\|_{\tilde{L}_{T_{i}}^{2}(\dot{B}_{2,2}^{s})}   \\
& \quad\quad
\leq C \big( \|u(T_{i})\|_{\dot{B}_{2,2}^{s-1}} + \|G\|_{\tilde{L}_{T_{i}}^{1}(\dot{B}_{2,2}^{s-1}(\omega^{i})} \big)   \\
& \quad\quad\quad
+ C \left\| 2^{js} \omega_{j}(t) \left( \|[\Delta_{j},\bar{\lambda}]\nabla u\|_{L_{T_{i}^{1}(L^{2})}}
+ \|[\Delta_{j},\bar{\mu}]\nabla u\|_{L_{T_{i}}^{1}(L^{2})} \right) \right\|_{\ell^{2}} \\
& \quad\quad\quad
+ C \left\| \frac{1}{\rho}B\cdot\nabla B \right\|_{\tilde{L}_{T_{i}}^{1}(\dot{B}_{2,2}^{s-1})}
+ C \left\| \frac{1}{\rho}\nabla (|B^{2}|) \right\|_{\tilde{L}_{T_{i}}^{1}(\dot{B}_{2,2}^{s-1})}.
\end{split}
\end{align}
Similar to the Navier-Stokes system, we have
\begin{align}\label{same ns 5}
\begin{split}
& \|G\|_{\tilde{L}_{T_{i}}^{1}(\dot{B}_{2,2}^{s-1}(\omega^{i}))} \leq C \big( \|u\|_{\tilde{L}_{T_{i}}^{2}(\dot{B}_{p}^{3/p})} \|u\|_{\tilde{L}_{T_{i}}^{2}(\dot{B}_{2,2}^{s})}  \\
& \quad\quad\quad\quad
+ \|a\|_{\tilde{L}_{T_{i}}^{\infty}(\dot{B}_{p}^{3/p}(\omega^{i}))}\|u\|_{\tilde{L}_{T_{i}}^{1}(\dot{B}_{2,2}^{s+1})} + \|a\|_{\tilde{L}_{T_{i}}^{\infty}(\dot{B}_{2,2}^{s})}
\|u\|_{\tilde{L}_{T_{i}}^{1}(\dot{B}_{p}^{3/p+1})}    \\
& \quad\quad\quad\quad
+ (T_{i+1} - T_{i})\|a\|_{\tilde{L}_{T_{i}}^{\infty}(\dot{B}_{2,2}^{s})}
\big),
\end{split}
\end{align}
and
\begin{align}\label{same ns 6}
\begin{split}
& \left\| 2^{js} \omega^{i}_{j}(t) \left( \|[\Delta_{j},\bar{\lambda}]\nabla u\|_{L_{T_{i}}^{1}(L^{2})}
+ \|[\Delta_{j},\bar{\mu}]\nabla u\|_{L_{T_{i}}^{1}(L^{2})} \right) \right\|_{\ell^{2}} \\
& \quad\quad\quad
\leq C \big( \|a\|_{\tilde{L}_{T_{i}}^{\infty}(\dot{B}_{p}^{3/p}(\omega^{i}))} \|u\|_{\tilde{L}_{T_{i}}^{1}(\dot{B}_{2,2}^{s+1})}
+ \|a\|_{\tilde{L}^{\infty}_{T_{i}}(\dot{B}_{2,2}^{s})} \|u\|_{\tilde{L}^{1}_{T_{i}}(\dot{B}_{p}^{3/p+1})} \big).
\end{split}
\end{align}
For the last two terms in \ref{high u}, all can be estimated as follows
\begin{align}\label{estimate high ub}
\begin{split}
& \left\| \frac{1}{\rho}B\cdot\nabla B \right\|_{\tilde{L}^{1}_{T_{i}}(\dot{B}_{2,2}^{s-1})} \leq C \|B\cdot\nabla B\|_{\tilde{L}^{1}_{T_{i}}(\dot{B}_{2,2}^{s-1})}
+ C \|\frac{a}{1+a} B\cdot\nabla B\|_{\tilde{L}^{1}_{T_{i}}(\dot{B}_{2,2}^{s-1})} \\
& \,\,
\leq C \|B\|_{\tilde{L}^{2}_{T_{i}}(\dot{B}_{p}^{3/p})} \|B\|_{\tilde{L}^{2}_{T_{i}}(\dot{B}_{2,2}^{s})}
+ C \|a\|_{\tilde{L}^{\infty}_{T_{i}}(\dot{B}_{p}^{3/p})} \|B\cdot\nabla B\|_{\tilde{L}^{1}_{T_{i}}(\dot{B}_{2,2}^{s-1})}    \\
& \,\quad
+ C \|B\cdot\nabla B\|_{\tilde{L}^{1}_{T_{i}}(\dot{B}_{p}^{3/p-1})}\|a\|_{\tilde{L}^{\infty}_{T_{i}}(\dot{B}_{2,2}^{s})}    \\
& \,\,
\leq  C \|B\|_{\tilde{L}^{2}_{T_{i}}(\dot{B}_{p}^{3/p})} \|B\|_{\tilde{L}^{2}_{T_{i}}(\dot{B}_{2,2}^{s})}
+ C \|a\|_{\tilde{L}^{\infty}_{T_{i}}(\dot{B}_{p}^{3/p})} \|B\|_{\tilde{L}^{2}_{T_{i}}(\dot{B}_{p}^{3/p})} \|B\|_{\tilde{L}^{2}_{T_{i}}(\dot{B}_{2,2}^{s})}   \\
& \,\,\quad
+ C \|B\|_{\tilde{L}^{2}_{T_{i}}(\dot{B}_{p}^{3/p})} \|B\|_{\tilde{L}^{2}_{T_{i}}(\dot{B}_{p}^{3/p})} \|a\|_{\tilde{L}^{\infty}_{T_{i}}(\dot{B}_{2,2}^{s})}.
\end{split}
\end{align}
Plugging (\ref{same ns 5}), (\ref{same ns 6}) and (\ref{estimate high ub}) into (\ref{high u}), we will obtain
\begin{align}\label{high u weight}
\begin{split}
& \|u\|_{\tilde{L}^{1}_{T_{i}}(\dot{B}_{2,2}^{s+1})} + \|u\|_{\tilde{L}^{2}_{T_{i}}(\dot{B}_{2,2}^{s})} \\
& \quad
\leq C \big( \|u(T_{i})\|_{\dot{B}_{2,2}^{s-1}} + \|a\|_{\tilde{L}^{\infty}_{T_{i}}(\dot{B}_{2,2}^{s})}(\|u\|_{\tilde{L}^{1}_{T_{i}}(\dot{B}_{p}^{3/p})} + (T_{i+1}-T_{i})) \big) \\
& \quad\quad
+ C \|B\|_{\tilde{L}^{2}_{T_{i}}(\dot{B}_{p}^{3/p})}\|B\|_{\tilde{L}^{2}_{T_{i}}(\dot{B}_{2,2}^{s})}
+ C \|a_{0}\|_{\dot{B}_{p}^{3/p}} \|B\|_{\tilde{L}^{2}_{T_{i}}(\dot{B}_{p}^{3/p})} \|B\|_{\tilde{L}^{2}_{T_{i}}(\dot{B}_{2,2}^{s})}   \\
& \quad\quad
+ C \|B\|_{\tilde{L}^{2}_{T_{i}}(\dot{B}_{p}^{3/p})}^{2} \|a\|_{\tilde{L}^{\infty}_{T_{i}}(\dot{B}_{2,2}^{s})}.
\end{split}
\end{align}
For the equation of magnetic field, using similar methods as in Proposition \ref{lowp}, we obtain
\begin{align}\label{estimate B high}
\begin{split}
\|B\|_{\tilde{L}^{\infty}_{T_{i}}(\dot{B}_{2,2}^{s-1})} + \|B\|_{\tilde{L}^{1}_{T_{i}}(\dot{B}_{2,2}^{s+1})}
\leq & C \|B_{0}\|_{\dot{B}_{2,2}^{s-1}} + C \|u\cdot\nabla B\|_{\tilde{L}^{1}_{T_{i}}(\dot{B}_{2,2}^{s-1})}    \\
& + C \|B\cdot\nabla u\|_{\tilde{L}^{1}_{T_{i}}(\dot{B}_{2,2}^{s-1})}.
\end{split}
\end{align}
Since
\begin{align*}
\|u\cdot\nabla B\|_{\tilde{L}^{1}_{T_{i}}(\dot{B}_{2,2}^{s-1})} \leq & C (\|u\|_{\tilde{L}^{2}_{T_{i}}(\dot{B}_{p}^{3/p})}\|\nabla B\|_{\tilde{L}^{2}_{T_{i}}(\dot{B}_{2,2}^{s-1})}
+ \|\nabla B\|_{\tilde{L}^{2}_{T_{i}}(\dot{B}_{p}^{3/p-1})} \|u\|_{\tilde{L}^{2}_{T_{i}}(\dot{B}_{2,2}^{s})})  \\
\leq & C \|u\|_{\tilde{L}^{2}_{T_{i}}(\dot{B}_{p}^{3/p})}\|B\|_{\tilde{L}^{2}_{T_{i}}(\dot{B}_{2,2}^{s})}
+ C \|B\|_{\tilde{L}^{2}_{T_{i}}(\dot{B}_{p}^{3/p})} \|u\|_{\tilde{L}^{2}_{T_{i}}(\dot{B}_{2,2}^{s})},
\end{align*}
and
\begin{align*}
\|B\cdot\nabla u\|_{\tilde{L}^{1}_{T_{i}}(\dot{B}_{2,2}^{s-1})} \leq C ( \|B\|_{\tilde{L}^{2}_{T_{i}}(\dot{B}_{p}^{3/p})} \|u\|_{\tilde{L}^{2}_{T_{i}}(\dot{B}_{2,2}^{s})}
+ \|u\|_{\tilde{L}^{2}_{T_{i}}(\dot{B}_{p}^{3/p})} \|B\|_{\tilde{L}^{2}_{T_{i}}(\dot{B}_{2,2}^{s})} ),
\end{align*}
we know that
\begin{align}\label{estimate B high 2}
\begin{split}
& \|B\|_{\tilde{L}^{\infty}_{T_{i}}(\dot{B}_{2,2}^{s-1})} + \|B\|_{\tilde{L}^{1}_{T_{i}}(\dot{B}_{2,2}^{s+1})} + \|B\|_{\tilde{L}^{2}_{T_{i}}(\dot{B}_{2,2}^{s})} \\
& \quad
\leq C\|B(T_{i})\|_{\dot{B}_{2,2}^{s-1}} + C \|u\|_{\tilde{L}^{2}_{T_{i}}(\dot{B}_{p}^{3/p})} \|B\|_{\tilde{L}^{2}_{T_{i}}(\dot{B}_{2,2}^{s})}    \\
& \quad\quad
+ C \|B\|_{\tilde{L}^{2}_{T_{i}}(\dot{B}_{p}^{3/p})} \|u\|_{\tilde{L}^{2}_{T_{i}}(\dot{B}_{2,2}^{s})}.
\end{split}
\end{align}
Combining (\ref{density high}), (\ref{high u weight}) and (\ref{estimate B high 2}), we have
\begin{align}\label{estimate high 1}
\begin{split}
& \|a\|_{\tilde{L}^{\infty}_{T_{i}}(\dot{B}_{2,2}^{s})} + \|u\|_{\tilde{L}^{1}_{T_{i}}(\dot{B}_{2,2}^{s+1})} + \|u\|_{\tilde{L}^{2}_{T_{i}}(\dot{B}_{2,2}^{s})}
+ \|B\|_{\tilde{L}^{1}_{T_{i}}(\dot{B}_{2,2}^{s+1})} + \|B\|_{\tilde{L}^{2}_{T_{i}}(\dot{B}_{2,2}^{s})} \\
& \quad
\leq C ( \|a(T_{i})\|_{\dot{B}_{2,2}^{s}} + \|u(T_{i})\|_{\dot{B}_{2,2}^{s-1}} + \|B(T_{i})\|_{\dot{B}_{2,2}^{s-1}} ).
\end{split}
\end{align}
Estimate velocity as in (\ref{estimate 1}) with regularity index to be $s-1$ and combine the estimate
about magnetic field $B$, we finally get
\begin{align}\label{estimate high 2}
\begin{split}
\|u\|_{\tilde{L}^{\infty}_{T_{i}}(\dot{B}_{2,2}^{s-1})} + \|B\|_{\tilde{L}^{\infty}_{T_{i}}(\dot{B}_{2,2}^{s-1})} \leq &
C (1+ \|a\|_{\tilde{L}^{\infty}_{T_{i}}(\dot{B}_{p}^{3/p})})( \|a(T_{i})\|_{\dot{B}_{2,2}^{s}}  \\
& + \|u(T_{i})\|_{\dot{B}_{2,2}^{s-1}} + \|B(T_{i})\|_{\dot{B}_{2,2}^{s-1}} ).
\end{split}
\end{align}
Combining (\ref{estimate high 1}) and (\ref{estimate high 2}), we arrive at
\begin{align}\label{final high}
\begin{split}
& \|a\|_{\tilde{L}^{\infty}_{T_{i}}(\dot{B}_{2,2}^{s})} + \|(u,B)\|_{\tilde{L}^{\infty}_{T_{i}}(\dot{B}_{2,2}^{s-1})} + \|(u,B)\|_{\tilde{L}^{1}_{T_{i}}(\dot{B}_{2,2}^{s+1})}  \\
& \quad
\leq C (1+ \|a\|_{\tilde{L}^{\infty}_{T_{i}}(\dot{B}_{p}^{3/p})})( \|a(T_{i})\|_{\dot{B}_{2,2}^{s}}
+ \|u(T_{i})\|_{\dot{B}_{2,2}^{s-1}} + \|B(T_{i})\|_{\dot{B}_{2,2}^{s-1}} ).
\end{split}
\end{align}
By induction, we know that
\begin{align}\label{final in high}
\begin{split}
& \|a\|_{\tilde{L}^{\infty}_{T}(\dot{B}_{2,2}^{s})} + \|(u,B)\|_{\tilde{L}^{\infty}_{T}(\dot{B}_{2,2}^{s-1})} + \|(u,B)\|_{\tilde{L}^{1}_{T}(\dot{B}_{2,2}^{s+1})}  \\
& \quad
\leq C (1+\|a\|_{\tilde{L}^{\infty}_{T}(\dot{B}_{p}^{3/p})})^{N+1}
( \|a(T_{i})\|_{\dot{B}_{2,2}^{s}} + \|u(T_{i})\|_{\dot{B}_{2,2}^{s-1}} + \|B(T_{i})\|_{\dot{B}_{2,2}^{s-1}} ).
\end{split}
\end{align}
Hence, we complete the proof.
\end{proof}


\section{Hoff's Energy Method}

In two papers \cite{hoff1995,hoff1997}, D. Hoff construct a global weak solution for Navier-Stokes equations with discontinuous initial data with
small energy. In the paper \cite{mhd2012hoff}, A. Suen and D. Hoff generalize the results for Navier-Stokes equations to
compressible MHD system. Here, we use the idea to our case. Comparing to \cite{mhd2012hoff}, we remove the restriction on viscosity for
we have stronger condition on the initial data.

We set $\sigma (t) := \min (1,t)$, define
\begin{align*}
A_{1}(T) & = \sup_{0 \leq t \leq T} (\|\nabla u(t)\|^{2}_{L^{2}} + \|\nabla B(t)\|^{2}_{L^{2}})
+ \int_{0}^{T}\int_{\mathbb{R}^{3}} \rho |\dot{u}|^{2} + |B_{t}|^{2} \, dx dt,   \\
A_{2}(T) & = \sup_{0 \leq t \leq T}\left( \sigma(t)\int_{\mathbb{R}^{3}} \rho |\dot{u}|^{2} \, dx + \sigma(t)\int_{\mathbb{R}^{3}} |B_{t}|^{2} \, dx \right)    \\
& \quad\quad\quad\quad\quad
+ \int_{0}^{T}\int_{\mathbb{R}^{3}} \sigma(t) |\nabla\dot{u}|^{2} \, dx dt + \int_{0}^{T}\int_{\mathbb{R}^{3}} \sigma(t) |\nabla B_{t}|^{2} \, dx dt,
\end{align*}
and
\begin{align*}
E(T) & = \int_{\mathbb{R}^{3}} \sigma(t) \left( |\nabla B|^{2} |B|^{2} + |\nabla B|^{2} |u|^{2} + |\nabla u|^{2} |B|^{2} \right) \, dx,    \\
H(T) & = \int_{0}^{T}\int_{\mathbb{R}^{3}} |\nabla B|^{3} \, dx dt + \int_{0}^{T}\int_{\mathbb{R}^{3}} |\nabla u|^{3} \, dx dt      \\
& \quad\quad\quad\quad\quad
+ \int_{0}^{T}\int_{\mathbb{R}^{3}} \sigma(t) |\nabla B|^{4} \, dx dt + \int_{0}^{T}\int_{\mathbb{R}^{3}} \sigma(t) |\nabla u|^{4} \, dx dt.
\end{align*}
In the following sections, we denote
\begin{align*}
\dot{f} = f_{t} + u\cdot\nabla f, \quad \mu' = \lambda + \mu,
\end{align*}
and
\begin{align*}
C_{0} = \|\rho_{0} - \bar{\rho}\|^{2}_{L^{2}} + \|u_{0}\|^{2}_{H^{2}} + \|B_{0}\|^{2}_{H^{1}}.
\end{align*}
Throughout this section, we denote by $C$ a constant depending only on $\lambda, \mu, c_{0}, \bar{\rho}, P_{+}$ and $P_{-}^{-1}$.

\begin{theorem}\label{hofftheorem}
Let $(\rho, u, B)$ be a solution of system (\ref{origianl MHD}) satisfying
\begin{align*}
\rho - \bar{\rho} \in C([0,T]; H^{2}), \quad u,B \in C([0,T]; H^{2}) \cap L^{2}(0,T; H^{3}).
\end{align*}
Then, there exists a constant $\epsilon_{0}$ depending only on $\nu, \lambda, \mu, c_{0}, \bar{\rho}, P_{+}, P_{-}^{-1}$ such that if
the initial data $(\rho_{0}, u_{0}, B_{0})$ satisfies
\begin{align*}
& c_{0} \leq \rho_{0}(x) \leq c_{0}^{-1}, \quad x \in \mathbb{R}^{3},     \\
& \|\rho - \bar{\rho}\|^{2}_{L^{2}} + \|u_{0}\|^{2}_{H^{1}} + \|B_{0}\|^{2}_{H^{1}} \leq \epsilon_{0},
\end{align*}
then we have
\begin{align*}
& \frac{c_{0}}{2} \leq \rho(t,x) \leq 2 c_{0}^{-1}, \quad (t,x) \in [0,T] \times \mathbb{R}^{3},  \\
& A(T) := A_{1}(T) + A_{2}(T) \leq \epsilon_{0}^{\frac{1}{2}}.
\end{align*}
\end{theorem}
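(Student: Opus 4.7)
The plan is to close the estimates by a standard continuity/bootstrap argument: assume a priori that $c_0/2 \le \rho \le 2c_0^{-1}$ and $A(T) \le \epsilon_0^{1/2}$ hold on some maximal interval $[0,T^{*}]$, and then improve each of these bounds by a factor, using smallness of $\epsilon_0$. The local theorem guarantees these estimates for small $t>0$, so showing the strict improvement forces $T^{*}=T$.

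First I would derive the basic energy identity by testing the momentum equation against $u$ and the magnetic equation against $B$; together with the continuity equation this controls $\|\sqrt{\rho}u\|_{L^2}^{2}+\|B\|_{L^2}^{2}+\int_0^T(\|\nabla u\|_{L^2}^{2}+\|\nabla B\|_{L^2}^{2})\,dt$ by $C_0$. Next, for the $A_{1}$-part I would introduce the effective viscous flux $F=(\lambda+2\mu)\mathrm{div}\,u-(P(\rho)-P(\bar\rho))-\frac{1}{2}|B|^{2}$ and the vorticity $\omega=\nabla\times u$, both of which satisfy elliptic equations of the form $\Delta F = \mathrm{div}(\rho\dot u - B\cdot\nabla B)$ and $\mu\Delta\omega = \nabla\times(\rho\dot u - B\cdot\nabla B)$, giving the crucial gain $\|\nabla u\|_{L^{q}}\lesssim \|F\|_{L^{q}}+\|\omega\|_{L^{q}}+\|P-P(\bar\rho)\|_{L^{q}}+\||B|^{2}\|_{L^{q}}$. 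Testing the momentum equation against $\dot u$ then produces a differential inequality for $\|\nabla u\|_{L^{2}}^{2}$ with source $\int\rho|\dot u|^{2}$, while testing the magnetic equation against $B_{t}$ controls $\|\nabla B\|_{L^{2}}^{2}$ and $\int|B_{t}|^{2}$. The nonlinear couplings $B\cdot\nabla B$ and $B\cdot\nabla u$ are absorbed using Sobolev embedding plus the smallness of $A(T)^{1/2}$.

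To bound $A_{2}(T)$ I would differentiate the momentum equation in $t$ (yielding an equation for $\dot u$) and the magnetic equation in $t$, multiply by $\sigma(t)\dot u$ and $\sigma(t)B_{t}$ respectively, and integrate. The term $\sigma'(t)$ is harmless because $\sigma'\le 1$ and is supported on $[0,1]$, where the $A_{1}$-bound already suffices. Along the way the auxiliary quantities $E(T)$ and $H(T)$ are controlled by Gagliardo--Nirenberg interpolation from $A_{1}$ and $A_{2}$; this is where the three-dimensional structure and the Riesz-transform boundedness of $F$ and $\omega$ are used most heavily. Each bound will come out of the form $A(T)\le C(C_{0}+A(T)^{3/2}+A(T)^{2})$, which, combined with the smallness of $C_0\le\epsilon_{0}$, closes the bootstrap to give $A(T)\le \epsilon_{0}^{1/2}$.

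Finally, the uniform density bound is recovered by rewriting the continuity equation along particle trajectories as $(\ln\rho)_{t}+u\cdot\nabla\ln\rho=-\mathrm{div}\,u$, substituting $\mathrm{div}\,u=(\lambda+2\mu)^{-1}(F+P(\rho)-P(\bar\rho)+\tfrac{1}{2}|B|^{2})$, and integrating in $t$. The contribution from $F$ and $|B|^{2}$ is small in $L^{\infty}_{t,x}$ by the $A$-bounds and Sobolev embedding, and a standard Gronwall-type argument on $\ln\rho$ (as in Hoff's original work) yields $c_0/2\le\rho\le 2c_0^{-1}$. The main obstacle I expect is not any single estimate but the bookkeeping of the mixed magnetic-velocity cubic and quartic terms appearing in $E(T)$ and $H(T)$: unlike the Navier--Stokes case, one needs to peel off cross-terms like $\int|\nabla u|^{2}|B|^{2}$ and $\int|\nabla B|^{2}|u|^{2}$ by carefully splitting in frequency or by using the structure $B\cdot\nabla B-\tfrac12\nabla|B|^2=\mathrm{div}(B\otimes B)-\tfrac12\nabla|B|^{2}$ together with the ellipticity of $F$, and to verify that all constants track only on $\lambda,\mu,\nu,c_{0},\bar\rho,P_{\pm}$ so that $\epsilon_{0}$ may be chosen independently of $T$.
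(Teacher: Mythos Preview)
Your proposal is correct and follows essentially the same architecture as the paper: a continuity argument, the $L^{2}$ energy estimate, the $A_{1}$ bound by testing with $\dot u$ and $B_{t}$, the $A_{2}$ bound by applying the material derivative to the momentum equation and $\partial_{t}$ to the magnetic equation with weight $\sigma(t)$, control of the auxiliary quantities $E(T)$ and $H(T)$ by interpolation, and finally the density bound via the equation for $\log\rho$ along trajectories.

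One small but genuine difference worth flagging: you define the effective viscous flux as $F=(\lambda+2\mu)\mathrm{div}\,u-(P-P(\bar\rho))-\tfrac12|B|^{2}$, which is the Suen--Hoff convention, whereas the paper keeps $F=(\lambda+\mu)\mathrm{div}\,u-(P-P(\bar\rho))$ without the magnetic pressure and instead leaves $B\cdot\nabla B-\tfrac12\nabla|B|^{2}$ on the right-hand side of $\Delta F=\mathrm{div}\,g$. Both choices work; the paper's version forces the magnetic contribution $\int\sigma|\nabla B|^{2}|B|^{2}$ to appear explicitly in the $\|\nabla F\|_{L^{2}}$ estimate, which is precisely why they isolate the quantity $E(T_{0})$ and prove a separate lemma for it. Your version shifts that bookkeeping into controlling $\||B|^{2}\|_{L^{q}}$ when you decompose $\nabla u$, which is equally tractable. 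The closing inequality in the paper is slightly sharper in the exponents (lowest order $C_{0}^{3/4}$ and $A^{9/8}$ rather than your $C_{0}$ and $A^{3/2}$), but this has no effect on the bootstrap.
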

\begin{proof}
Considering the assumption, there exists a $0 < T_{0} \leq T$ such that the solution $(\rho, u, B)$ satisfies
\begin{align*}
& \frac{c_{0}}{2} \leq \rho(t,x) \leq 2 c_{0}^{-1}, \quad (t,x) \in [0,T_{0}] \times \mathbb{R}^{3},  \\
& A_{1}(T_{0}) + A_{2}(T_{0}) \leq \epsilon_{0}^{\frac{1}{2}}.
\end{align*}
Without loss of generality, we assume that $T_{0}$ is a maximal time so that the above inequalities hold.
In the following, we will give a refined estimates on $[0,T]$ for the solution.
Due to the proof is too long, we divide the proof into several lemmas.
\begin{lemma}\label{hofflemma1}($L^{2}$ energy estimate)
\begin{align*}
\int_{\mathbb{R}^{3}} |\rho - \bar{\rho}|^{2} + \rho|u|^{2} & + |B|^{2} \, dx
+ \int_{0}^{T}\int_{\mathbb{R}^{3}} |\nabla u|^{2} + |\nabla B|^{2} \, dx dt \leq C C_{0}.
\end{align*}
\end{lemma}
The proof of this lemma is exactly the same as Lemma 2.2 in \cite{mhd2011Chengchun}, so we omit it.
\begin{lemma}\label{hofflemma2}($H^{1}$ energy estimate)
\begin{align*}
A_{1}(T_{0}) \leq & C C_{0} + C \int_{0}^{T_{0}}\int_{\mathbb{R}^{3}} |\nabla B|^{2}|B|^{2} + |\nabla B|^{2}|u|^{2} + |\nabla u|^{2}|B|^{2} \, dx dt    \\
& + C \int_{0}^{T_{0}}\int_{\mathbb{R}^{3}} |\nabla u|^{3} \, dx dt.
\end{align*}
\end{lemma}
\begin{proof}
Multiply the second equation in (\ref{origianl MHD}) by $\dot{u}$ and integrate over $\mathbb{R}^{3}$, we will obtain
\begin{align}\label{hiesforu}
\begin{split}
\int_{\mathbb{R}^{3}} \rho |\dot{u}|^{2} \, dx = & \int_{\mathbb{R}^{3}} ( -\dot{u}\cdot\nabla P + \mu \Delta u \, \dot{u}
+ \mu' \nabla\mathrm{div}u \, \dot{u}   \\
& - \nabla (\frac{1}{2}|B|^{2})\dot{u} + \mathrm{div}(B \, B^{T}) \, \dot{u} ) \, dx.
\end{split}
\end{align}
By the continuity equation, we have
\begin{align}
\partial P + \mathrm{div}(uP) = \mathrm{div}u \, (P - P'\rho).
\end{align}
There are some terms can be estimated as in the Navier-Stokes equations, so we omit the details and only give the results as follows
\begin{align}\label{h1ns1}
\int_{\mathbb{R}^{3}}-\dot{u}\cdot\nabla P\, dx \leq & \partial_{t}\int_{\mathbb{R}^{3}}\mathrm{div}u\, (P-P(\bar{\rho}))\, dx
+ C \|\nabla u\|^{2}_{L^{2}},
\end{align}
\begin{align}\label{h1ns2}
\mu \int_{\mathbb{R}^{3}} \Delta u\, \dot{u} \, dx \leq & - \frac{\mu}{2} \partial_{t}\int_{\mathbb{R}^{3}} |\nabla u|^{2} \, dx
+ C \int_{\mathbb{R}^{3}} |\nabla u|^{3} \, dx,
\end{align}
\begin{align}\label{h1ns3}
\mu' \int_{\mathbb{R}^{3}} \nabla\mathrm{div} u \, \dot{u} \, dx \leq - \frac{\lambda}{2} \partial_{t} \int_{\mathbb{R}^{3}} |\mathrm{div}u|^{2} \, dx
+ C \int_{\mathbb{R}^{3}} |\nabla u|^{3} \, dx,
\end{align}
and
\begin{align}\label{h1ns4}
\begin{split}
\int_{\mathbb{R}^{3}} \mathrm{div}u \, (P - P(\bar{\rho})) \, dx & \leq \|\mathrm{div}u\|_{L^{2}} \|P(\rho) - P(\bar{\rho})\|_{L^{2}} \\
& \leq C C_{0}^{\frac{1}{2}} \|\mathrm{div}u\|_{L^{2}}.
\end{split}
\end{align}
The following terms are not appeared in Navier-Stokes system
\begin{align}\label{h1nn1}
\begin{split}
\int_{\mathbb{R}^{3}} - \nabla (\frac{1}{2}|B|^{2})\, \dot{u} \, dx + \int_{\mathbb{R}^{3}} \mathrm{div}(B\, B^{T}) \, \dot{u} \, dx
& \leq C \int_{\mathbb{R}^{3}} |\nabla B| |B| |\dot{u}| \, dx     \\
\leq C & \int_{\mathbb{R}^{3}} |\nabla B|^{2} |B|^{2} \, dx + \epsilon \int_{\mathbb{R}^{3}} |\dot{u}|^{2}\, dx.
\end{split}
\end{align}
Multiply the third equation in (\ref{origianl MHD}) with $B_{t}$ and integrate over $\mathbb{R}^{3}$, we have
\begin{align*}
\int_{\mathbb{R}^{3}} |B_{t}|^{2} \, dx + \int_{\mathbb{R}^{3}} \mathrm{div}(B\,u^{T} - u\, B^{T})\, B_{t}\, dx
= -\frac{\nu}{2} \partial_{t} \int_{\mathbb{R}^{3}} |\nabla B|^{2} \, dx.
\end{align*}
Integrate the above equality from $0$ to $T_{0}$, we will obtain
\begin{align}\label{h1esforB}
\begin{split}
&\frac{\nu}{2}\int_{\mathbb{R}^{3}} |\nabla B|^{2} \, dx + \int_{0}^{T_{0}}\int_{\mathbb{R}^{3}} |B_{t}|^{2} \, dx   \\
& \quad\quad\quad
\leq C C_{0} + C \int_{0}^{T_{0}}\int_{\mathbb{R}^{3}} |\nabla B|^{2}|u|^{2} + |\nabla u|^{2}|B|^{2} \, dx dt.
\end{split}
\end{align}
Combining (\ref{hiesforu}), (\ref{h1ns1}), (\ref{h1ns2}), (\ref{h1ns3}), (\ref{h1ns4}), (\ref{h1nn1}) and (\ref{h1esforB}),
we finally get the desired result.
\end{proof}
\begin{lemma}\label{hofflemma3}($H^{2}$ energy estimate)
\begin{align*}
A_{2}(T_{0}) \leq & C C_{0} + C A_{1}(T_{0}) + C \int_{0}^{T_{0}}\int_{\mathbb{R}^{3}}\sigma |\nabla u|^{4} \, dx dt  \\
& + C \int_{0}^{T}\int_{\mathbb{R}^{3}}\sigma (|u|^{2}+|B|^{2})(|B_{t}|^{2}+|\dot{u}|^{2}+|u|^{2}(|\nabla B|^{2}+|\nabla u|^{2}))\, dx dt.
\end{align*}
\end{lemma}
\begin{proof}
Take material derivative to the second equation of (\ref{origianl MHD}) to obtain
\begin{align*}
& \rho\dot{u}_{t} + \rho u\cdot\nabla \dot{u} + \nabla P_{t} + \mathrm{div}(\nabla P \otimes u) + \nabla (\frac{1}{2}|B|^{2})_{t}
+ \mathrm{div}(u\cdot\nabla(\frac{1}{2}|B|^{2}))    \\
& \quad\quad\quad\quad\quad\quad\quad\quad\quad
- \mathrm{div}(B\, B^{T})_{t} - \mathrm{div}(u\,\mathrm{div}(B\, B^{T})) = \mu \Delta u_{t} + \mu \mathrm{div} (u\, \Delta u)   \\
& \quad\quad\quad\quad\quad\quad\quad\quad\quad
+ \mu' \nabla\mathrm{div}u_{t} + \mu' \mathrm{div}(u\cdot\nabla\mathrm{div}u).
\end{align*}
Multiply $\sigma(t)\dot{u}$ on both sides of the above equality, we will have
\begin{align}\label{h2esforu}
\begin{split}
\partial_{t}\left( \frac{\sigma}{2}\int_{\mathbb{R}^{3}}\rho |\dot{u}|^{2} \, dx \right)
- \frac{1}{2}\sigma' & \int_{\mathbb{R}^{3}} \rho |\dot{u}|^{2} \, dx =
- \int_{\mathbb{R}^{3}} \sigma\dot{u}^{j} (\partial_{j}P_{t} + \mathrm{div}(\partial_{j}P \, u))\, dx   \\
= & - \int_{\mathbb{R}^{3}} \sigma \dot{u}^{j} ((\frac{1}{2}|B|^{2})_{x_{j},t} + \mathrm{div}(u\, \partial_{j}(\frac{1}{2}|B|^{2})))\, dx   \\
& + \int_{\mathbb{R}^{3}} \sigma \dot{u}^{j} (\mathrm{div}(B^{j}B)_{t} + \mathrm{div}(u\mathrm{div}(B^{j}B))) \, dx \\
& + \mu \sigma \int_{\mathbb{R}^{3}} \dot{u} \, (\Delta u_{t} + \mathrm{div}(\Delta u \otimes u))\, dx   \\
& + \mu' \sigma \int_{\mathbb{R}^{3}} \dot{u} \, (\nabla\mathrm{div}u_{t} + \mathrm{div}(\nabla\mathrm{div}u \otimes u)) \, dx.
\end{split}
\end{align}
Similar to the $H^{1}$ estimate, there are some terms same as Navier-Stokes equations and we only give the results as follows
\begin{align}\label{h2ns1}
- \int_{\mathbb{R}^{3}} \sigma \dot{u}^{j} (\partial_{j}P_{t} + \mathrm{div}(\partial_{j}P\, u)) \, dx
\leq C \sigma \|\nabla u\|_{L^{2}} \|\nabla\dot{u}\|_{L^{2}},
\end{align}
\begin{align}\label{h2ns2}
\mu\sigma \int_{\mathbb{R}^{3}} \dot{u}\, (\Delta u_{t} + \mathrm{div}(\Delta u \otimes u)) \, dx
\leq \sigma \int_{\mathbb{R}^{3}} -\frac{3\mu}{4} |\nabla\dot{u}|^{2} \, dx + \sigma C \int_{\mathbb{R}^{3}} |\nabla u|^{4} \, dx,
\end{align}
\begin{align}\label{h2ns3}
\begin{split}
\mu' \sigma \int_{\mathbb{R}^{3}} \dot{u} \, (\nabla\mathrm{div}u_{t} & + \mathrm{div}(\nabla\mathrm{div}u \otimes u)) \, dx  \\
& \leq \sigma \int_{\mathbb{R}^{3}} -\frac{\lambda}{2} |\mathrm{div}\dot{u}|^{2} + \frac{\mu}{4} |\nabla\dot{u}|^{2} + C |\nabla u|^{4} \, dx.
\end{split}
\end{align}
For the terms not appeared in the Navier-Stokes equations, we have
\begin{align}\label{h2nnnB}
\begin{split}
& - \int_{\mathbb{R}^{3}} \sigma \dot{u}^{j} (\partial_{j}(\frac{1}{2}|B|^{2})_{t} + \mathrm{div}(u\,\partial_{j}(\frac{1}{2}|B|^{2})))\, dx   \\
& \quad\quad\quad\quad\quad
\leq \epsilon \int_{\mathbb{R}^{3}} \sigma |\nabla\dot{u}|^{2} \, dx
+ C \int_{\mathbb{R}^{3}} \sigma |B|^{2} (|B_{t}|^{2} + |u|^{2}|\nabla B|^{2}) \, dx.
\end{split}
\end{align}
For the equation about magnetic field, multiplying $\sigma B_{t}$ on both sides, we will have
\begin{align}\label{h2esforB}
\begin{split}
& \frac{1}{2}\sigma \int_{\mathbb{R}^{3}} |B_{t}|^{2} \, dx + \nu \int_{0}^{T_{0}}\int_{\mathbb{R}^{3}} \sigma |\nabla B_{t}|^{2} \, dx dt    \\
& \quad\quad
= - \int_{0}^{T_{0}}\int_{\mathbb{R}^{3}} \sigma B_{t} (\mathrm{div}(B \, u^{T} - u \, B^{T}))_{t} \, dx dt
+ \frac{1}{2}\int_{0}^{T_{0}}\int_{\mathbb{R}^{3}}\sigma' |B_{t}|^{2} \, dx dt.
\end{split}
\end{align}
For the first term on the right hand side of the above equality, we have
\begin{align}\label{h2esforBin}
\begin{split}
& - \int_{0}^{T_{0}}\int_{\mathbb{R}^{3}}\sigma B_{t}(\mathrm{div}(B \, u^{T} - u\, B^{T}))_{t}\, dx dt   \\
= & \int_{0}^{T_{0}}\int_{\mathbb{R}^{3}}\sigma \nabla B_{t} (B_{t}\, u^{T} + B\, u_{t}^{T} - u_{t}\, B^{T} - u\, B_{t}^{T}) \, dx dt   \\
\leq & \epsilon \int_{0}^{T_{0}}\int_{\mathbb{R}^{3}}\sigma |\nabla B_{t}|^{2} \, dx dt
+ C \int_{0}^{T_{0}}\int_{\mathbb{R}^{3}} \sigma (|B_{t}|^{2}|u|^{2} + |B|^{2}|u_{t}|^{2})\, dx dt  \\
\leq & \epsilon \int_{0}^{T_{0}}\int_{\mathbb{R}^{3}}\sigma |\nabla B_{t}|^{2} \, dx dt     \\
& \quad\quad\quad\,\,
+ C \int_{0}^{T_{0}}\int_{\mathbb{R}^{3}}\sigma ( |B_{t}|^{2}|u|^{2} + |B|^{2}|\dot{u}|^{2} + |B|^{2}|u|^{2}|\nabla u|^{2} ) \, dx dt,
\end{split}
\end{align}
where $\epsilon$ is a small enough positive number.
Combining estimates (\ref{h2esforu}), (\ref{h2ns1}), (\ref{h2ns2}), (\ref{h2ns3}), (\ref{h2nnnB}), (\ref{h2esforB}) and (\ref{h2esforBin}), we will obtain
\begin{align}\label{h2combin}
\begin{split}
& \sigma \int_{\mathbb{R}^{3}} \rho |\dot{u}|^{2} + |B_{t}|^{2} \, dx
+ \int_{0}^{T_{0}}\int_{\mathbb{R}^{3}} \sigma (|\nabla\dot{u}|^{2} + |\nabla B_{t}|^{2}) \, dx dt  \\
\leq & C \int_{0}^{T_{0}}\int_{\mathbb{R}^{3}} \sigma |\nabla u|_{L^{2}}|\nabla\dot{u}|_{L^{2}} \, dx dt
+ C \int_{0}^{T_{0}}\int_{\mathbb{R}^{3}} \sigma |\nabla u|^{4} \, dx dt    \\
& + C \int_{0}^{1\wedge T_{0}}\int_{\mathbb{R}^{3}}\rho |\dot{u}|^{2} \, dx dt
+ C \int_{0}^{1 \wedge T_{0}}\int_{\mathbb{R}^{3}} |B_{t}|^{2} \, dx dt \\
& + C \int_{0}^{T_{0}}\int_{\mathbb{R}^{3}}\sigma (|u|^{2}+|B|^{2})(|B_{t}|^{2} + |\dot{u}|^{2} + |u|^{2}(|\nabla B|^{2} + |\nabla u|^{2})) \, dx dt.
\end{split}
\end{align}
We can easily obtain the following estimates
\begin{align}\label{h2combin1}
\begin{split}
&\int_{0}^{T_{0}}\int_{\mathbb{R}^{3}} \sigma |\nabla u|_{L^{2}} |\nabla\dot{u}|_{L^{2}} \, dx dt    \\
& \quad
\leq C \int_{0}^{T_{0}}\int_{\mathbb{R}^{3}} \sigma |\nabla\dot{u}|^{2} \, dx dt
+ C \int_{0}^{T_{0}}\int_{\mathbb{R}^{3}} \sigma |\nabla u|^{2} \, dx dt    \\
& \quad
\leq C C_{0} + C A_{1}(T_{0}),
\end{split}
\end{align}
\begin{align}\label{h2combin2}
\begin{split}
\int_{0}^{1\wedge T_{0}}\int_{\mathbb{R}^{3}} \rho |\dot{u}|^{2} \, dx dt + \int_{0}^{1 \wedge T_{0}}\int_{\mathbb{R}^{3}} |B_{t}|^{2} \, dx dt \leq C A_{1}(T_{0}).
\end{split}
\end{align}
Substitute (\ref{h2combin1}) and (\ref{h2combin2}) into (\ref{h2combin}), we finally complete the proof.
\end{proof}
\begin{lemma}\label{hofflemma6}
\begin{align*}
A_{1}(T_{0}) & + A_{2}(T_{0}) \leq C C_{0} + C C_{0} (C_{0}^{5/3} + C_{0}A_{2}(T_{0})) A_{1}(T_{0})^{3/2}   \\
& + C C_{0} A_{1}(T_{0})^{2} + C C_{0}^{2} A_{1}(T_{0})^{3/2} (\sigma |\nabla^{2}B|^{2}_{L^{2}})   \\
& + C \int_{0}^{T_{0}}\int_{\mathbb{R}^{3}} |\nabla u|^{3} + |\nabla B|^{3} \, dx dt
+ C \int_{0}^{T_{0}}\int_{\mathbb{R}^{3}} \sigma ( |\nabla u|^{4} + |\nabla B|^{4} ) \, dx dt.
\end{align*}
\end{lemma}
\begin{proof}
Combining Lemma \ref{hofflemma1}, Lemma \ref{hofflemma2} and Lemma \ref{hofflemma3}, we will have
\begin{align}\label{lemma6hoffzong}
A_{1}(T_{0}) & + A_{2}(T_{0}) \leq C C_{0} + C \int_{0}^{T_{0}}\int_{\mathbb{R}^{3}}\sigma |\nabla u|^{4} \, dx dt
+ C \int_{0}^{T_{0}}\int_{\mathbb{R}^{3}} |\nabla u|^{3} \, dx dt    \nonumber \\
& + C \int_{0}^{T_{0}}\int_{\mathbb{R}^{3}} \sigma (|u|^{2}+|B|^{2})(|B_{t}|^{2}+|\dot{u}|^{2}+|u|^{2}(|\nabla B|^{2}+|\nabla u|^{2})) dx dt \\
& + C \int_{0}^{T_{0}}\int_{\mathbb{R}^{3}} (|B|^{2}+|u|^{2})(|\nabla B|^{2}+|\nabla u|^{2}) \, dx dt.    \nonumber
\end{align}
Next we need to estimate some typical terms on the right hand side. \\
$\mathbf{Term \,\, 1:}$ $\int_{0}^{T_{0}}\int_{\mathbb{R}^{3}} |B|^{2} |\nabla B|^{2} \, dx dt$
\begin{align*}
& \int_{0}^{T_{0}}\int_{\mathbb{R}^{3}} |B|^{2} |\nabla B|^{2} \, dx dt \leq
\int_{0}^{T_{0}}\left( \int_{\mathbb{R}^{3}} |B|^{6} \, dx \right)^{1/3} \left( \int_{\mathbb{R}^{3}} |\nabla B|^{3} \, dx \right)^{2/3} \, dt  \\
& \quad\quad\quad
\leq C \int_{0}^{T_{0}}\int_{\mathbb{R}^{3}} |\nabla B|^{3} \, dx dt + C \int_{0}^{T_{0}}\int_{\mathbb{R}^{3}} |B|^{6} \, dx dt \\
& \quad\quad\quad
\leq C \int_{0}^{T_{0}} \left( \int_{\mathbb{R}^{3}} |\nabla B|^{3} \, dx \right)^{3} \, dt
+ C \int_{0}^{T_{0}} \int_{\mathbb{R}^{3}} |\nabla B|^{3} \, dx dt  \\
& \quad\quad\quad
\leq C \left( \int_{\mathbb{R}^{2}} |\nabla B|^{2} \, dx \right)^{2} \int_{0}^{T_{0}}\int_{\mathbb{R}^{3}} |\nabla B|^{2} \, dx dt
+ C \int_{0}^{T_{0}}\int_{\mathbb{R}^{3}} |\nabla B|^{3} \, dx dt   \\
& \quad\quad\quad
\leq C C_{0} A_{1}(T_{0})^{2} + C \int_{0}^{T_{0}}\int_{\mathbb{R}^{3}} |\nabla B|^{3} \, dx dt.
\end{align*}
$\mathbf{Term \,\, 2:}$ $\int_{0}^{T_{0}}\int_{\mathbb{R}^{3}} \sigma |B|^{2} |B_{t}|^{2} \, dx dt$
\begin{align*}
& \int_{0}^{T_{0}}\int_{\mathbb{R}^{3}}\sigma |B|^{2} |B_{t}|^{2} \, dx dt \leq
\left( \int_{0}^{T_{0}}\int_{\mathbb{R}^{3}} |B|^{6} \, dx dt \right)^{1/3}
\left( \int_{0}^{T_{0}}\int_{\mathbb{R}^{3}} \sigma^{3/2} |B_{t}|^{3} \, dx dt \right)^{2/3}    \\
& \quad\quad\quad
\leq \left( \int_{0}^{T_{0}} \left( \int_{\mathbb{R}^{3}} |\nabla B|^{2} \, dx \right)^{3} dt \right)^{1/3}
\left( \sigma \|B_{t}\|^{2}_{L^{2}} \right)^{1/3} \left( \int_{0}^{T_{0}}\int_{\mathbb{R}^{3}} \sigma |B_{t}|^{2} \, dx dt \right)^{2/3}    \\
& \quad\quad\quad
\leq C C_{0}^{1/3} A_{1}(T_{0})^{4/3} A_{2}(T_{0})^{1/3}.
\end{align*}
$\mathbf{Term \,\, 3:}$ $\int_{0}^{T_{0}}\int_{\mathbb{R}^{3}} \sigma |B|^{2} |u|^{2} |\nabla B|^{2} \, dx dt$
\begin{align*}
& \int_{0}^{T_{0}}\int_{\mathbb{R}^{3}} \sigma |B|^{2} |u|^{2} |\nabla B|^{2} \, dx dt \leq
\int_{0}^{T_{0}}\int_{\mathbb{R}^{3}} \sigma (|B|^{8} + |u|^{8} + |\nabla B|^{4}) \, dx dt  \\
& \quad\quad\quad
\leq \int_{0}^{T_{0}}\int_{\mathbb{R}^{3}} \sigma |\nabla B|^{4} \, dx dt
+ (\|u\|^{4}_{L^{4}} + \|B\|^{4}_{L^{4}})\int_{0}^{T_{0}} \sigma (\|u\|^{4}_{L^{\infty}} + \|B\|^{4}_{L^{\infty}}) \, dt
\end{align*}
Since
\begin{align*}
\|u\|^{4}_{L^{4}} + \|B\|^{4}_{L^{4}} \leq \|u\|_{L^{2}} \|\nabla u\|^{3}_{L^{2}} + \|B\|_{L^{2}} \|\nabla B\|^{3}_{L^{2}},
\end{align*}
and
\begin{align*}
& \int_{0}^{T_{0}} \sigma \|u\|^{4}_{L^{\infty}} + \sigma \|B\|^{4}_{L^{\infty}} \, dt \leq
\int_{0}^{T_{0}} \sigma \|u\|^{2}_{L^{6}}\|\nabla u\|^{2}_{L^{6}} + \sigma \|B\|^{2}_{L^{6}} \|\nabla B\|^{2}_{L^{6}} \, dt     \\
& \quad\quad
\leq \int_{0}^{T_{0}}\sigma \|\nabla u\|^{2}_{L^{2}} (\|\rho\dot{u}\|_{L^{2}} + \|P(\rho) - P(\bar{\rho})\|_{L^{6}})^{2} \, dt
+ \int_{0}^{T_{0}} \sigma \|\nabla B\|^{2}_{L^{2}} \|\nabla^{2}B\|^{2}_{L^{2}} \, dt     \\
& \quad\quad
\leq \sigma \|\rho\dot{u}\|^{2}_{L^{2}} \int_{0}^{T_{0}} \|\nabla u\|^{2}_{L^{2}} \, dt
+ C C_{0}^{5/3} + C C_{0} \sigma \|\nabla^{2} B\|^{2}_{L^{2}},
\end{align*}
we can get the estimate
\begin{align*}
\int_{0}^{T_{0}}\int_{\mathbb{R}^{3}} \sigma |B|^{2} |u|^{2} |\nabla B|^{2} \, dx dt \leq &
\int_{0}^{T_{0}}\int_{\mathbb{R}^{3}} \sigma |\nabla B|^{4} \, dx dt    \\
+ & C C_{0} A_{1}(T_{0})^{3/2} (C_{0}^{5/3} + C_{0}\sigma \|\nabla^{2}B\|^{2}_{L^{2}} + C_{0}A_{2}(T_{0})).
\end{align*}
where we used Lemma \ref{hofflemma1} and the definition of $A_{1}$, $A_{2}$.

Using similar procedure as did for Term 1, we have
\begin{align}\label{lemma6ty1}
\begin{split}
& \int_{0}^{T_{0}}\int_{\mathbb{R}^{3}}(|B|^{2} + |u|^{2})(|\nabla B|^{2} + |\nabla u|^{2}) \, dx dt  \\
& \quad\quad\quad\quad
\leq C C_{0} A_{1}(T_{0})^{2} + C \int_{0}^{T_{0}}\int_{\mathbb{R}^{3}} |\nabla B|^{3} \, dx dt
+ C \int_{0}^{T_{0}}\int_{\mathbb{R}^{3}} |\nabla u|^{3} \, dx dt.
\end{split}
\end{align}
Using similar procedure as did for Term 2, we have
\begin{align}\label{lemma6ty2}
\begin{split}
\int_{0}^{T_{0}}\int_{\mathbb{R}^{3}}\sigma (|u|^{2} + |B|^{2})(|B_{t}|^{2} + |\dot{u}|^{2})\, dx dt \leq C C_{0}^{1/3} A_{1}(T_{0})^{4/3} A_{2}(T_{0})^{1/3}.
\end{split}
\end{align}
Using similar procedure as did for Term 3, we have
\begin{align}\label{lemma6ty3}
\begin{split}
& \int_{0}^{T_{0}}\int_{\mathbb{R}^{3}} \sigma (|u|^{2} + |B|^{2})|u|^{2}(|\nabla B|^{2} + |\nabla u|^{2}) \, dx dt   \\
& \quad\quad\quad\quad
\leq C C_{0} A_{1}(T_{0})^{3/2} (C_{0}^{5/3} + C_{0}\sigma \|\nabla^{2}B\|^{2}_{L^{2}} + C_{0}A_{2}(T_{0})) \\
& \quad\quad\quad\quad\quad
+ \int_{0}^{T_{0}}\int_{\mathbb{R}^{3}} \sigma |\nabla B|^{4} \, dx dt.
\end{split}
\end{align}
At last substitute (\ref{lemma6ty1}), (\ref{lemma6ty2}), (\ref{lemma6ty3}) into (\ref{lemma6hoffzong})
and do simple reduction, we will complete our proof.
\end{proof}

Before going to the following part, we need to introduce some notations. Let
\begin{align}\label{effectivedefi}
F = (\lambda + \mu) \mathrm{div}u - P(\rho) + P(\bar{\rho}),
\end{align}
and
\begin{align}\label{omegadefi}
\omega^{j,k} = u^{j}_{x_{k}} - u^{k}_{x_{j}}, \quad j,k = 1,2,3.
\end{align}
Through simple calculations, we could find that
\begin{align}\label{equforomega}
\mu\Delta \omega^{j,k} = (\rho\dot{u}^{j})_{x_{k}} - (\rho\dot{u}^{k})_{x_{j}} - (B\cdot\nabla B^{j})_{x_{k}} + (B\cdot\nabla B^{k})_{x_{j}},
\end{align}
and
\begin{align}\label{equforF}
\Delta F = \mathrm{div} g
\end{align}
where $g^{j} = \rho\dot{u}^{j} + (\frac{1}{2}|B|^{2})_{x_{j}} - \mathrm{div}(B^{j}B)$ with $j=1,2,3$.
\begin{lemma}\label{hofflemma5}
\begin{align*}
E(T_{0}) \leq C C_{0} + P_{E}(A(T_{0})),
\end{align*}
where $A(T_{0}) = A_{1}(T_{0}) + A_{2}(T_{0})$ and $P_{E}$ is a polynomial function with $2$ as the lowest order.
\end{lemma}
\begin{proof}
For the term $\int_{\mathbb{R}^{3}}\sigma |\nabla B|^{2} |u|^{2} \, dx$, we have
\begin{align}\label{lemma5ty}
\begin{split}
\int_{\mathbb{R}^{3}}\sigma |\nabla B|^{2}|u|^{2}\, dx & \leq \sigma \|u\|^{2}_{L^{\infty}} \int_{\mathbb{R}^{2}} |\nabla B|^{2} \, dx  \\
& \leq C A_{1}(T_{0}) \left( \sigma \|u\|^{2}_{L^{4}} + \sigma \|\nabla u\|^{2}_{L^{4}} \right).
\end{split}
\end{align}
For term $\sigma \|u\|^{2}_{L^{4}}$ which appeared in the above inequality, we have
\begin{align}\label{lemma5right}
\sigma \|u\|^{2}_{L^{4}} \leq \sigma \|u\|^{1/2}_{L^{2}} \|\nabla u\|^{3/2}_{L^{2}} \leq C C_{0}^{1/2} A_{1}(T_{0})^{3/4}.
\end{align}
The analysis about $\sigma \|\nabla u\|^{2}_{L^{4}}$ is a little bit more complex, by (\ref{effectivedefi}), we know that
\begin{align}\label{lemma5esti1}
\sigma \|\nabla u\|^{2}_{L^{4}} \leq \sigma \|F\|^{2}_{L^{4}} + \sigma \|\omega\|^{2}_{L^{4}} + \sigma \|P(\rho) - P(\bar{\rho})\|^{2}_{L^{4}}.
\end{align}
For the first term on the right hand side of the above inequality, we have
\begin{align}\label{lemma5esti2}
\begin{split}
\sigma \|F\|^{2}_{L^{4}} & \leq \sigma \|F\|_{L^{2}}^{1/2} \|\nabla F\|_{L^{2}}^{3/2} \\
\leq C & \left(\|\nabla u\|_{L^{2}} + \|\rho - \bar{\rho}\|_{L^{2}}\right)^{1/2} \left(\int_{\mathbb{R}^{3}}\sigma |\dot{u}|^{2} \, dx
+ \int_{\mathbb{R}^{3}} \sigma |\nabla B|^{2} |B|^{2} \, dx \right)^{3/4}   \\
\leq C & \left( A_{1}(T_{0})^{1/4} + C_{0}^{1/4} \right)\left( A_{2}(T_{0})^{3/4} + E(T_{0})^{3/4} \right).
\end{split}
\end{align}
For the second term on the right hand side of (\ref{lemma5esti1}), we have
\begin{align}\label{lemma5esti3}
\begin{split}
\sigma \|\omega\|^{2}_{L^{4}} & \leq \sigma \|\omega\|_{L^{2}}^{1/2} \|\nabla \omega\|_{L^{2}}^{3/2}    \\
& \leq \|\nabla u\|_{L^{2}}^{1/2} \left( \int_{\mathbb{R}^{3}} \sigma |\nabla \omega|^{2} \, dx \right)^{3/4}   \\
& \leq A_{1}(T_{0})^{1/4} \left( \int_{\mathbb{R}^{3}}\sigma |\dot{u}|^{2} \, dx + \int_{\mathbb{R}^{3}} \sigma |\nabla B|^{2} |B|^{2} \, dx \right)^{3/4}  \\
& \leq A_{1}(T_{0})^{1/4} \left( A_{2}(T_{0})^{3/4} + E(T_{0})^{3/4} \right).
\end{split}
\end{align}
For the third term on the right hand side of (\ref{lemma5esti1}), we have
\begin{align}\label{lemma5esti4}
\sigma \|P(\rho) - P(\bar{\rho})\|^{2}_{L^{4}} \leq C \left( \int_{\mathbb{R}^{3}} |\rho - \bar{\rho}|^{2} \, dx \right)^{1/2}
\leq C C_{0}^{1/2}.
\end{align}
Substitute (\ref{lemma5esti2}), (\ref{lemma5esti3}), (\ref{lemma5esti4}) into (\ref{lemma5esti1}), we will obtain
\begin{align}\label{lemma5esti5}
\sigma \|\nabla u\|^{2}_{L^{4}} \leq C C_{0}^{1/2} + C (A_{1}(T_{0})^{1/4} + C_{0}^{1/4})(A_{2}(T_{0})^{3/4} + E(T_{0})^{3/4})
\end{align}
Combining (\ref{lemma5right}), (\ref{lemma5esti5}) and (\ref{lemma5ty}), we finally have
\begin{align}\label{lemma5esti6}
\begin{split}
\int_{\mathbb{R}^{3}} \sigma |\nabla B|^{2} |u|^{2} \, dx \leq & C A_{1}(T_{0})
\Big( C_{0}^{1/2} A_{1}(T_{0})^{3/4} + C_{0}^{1/2}  \\
& + (A_{1}(T_{0})^{1/4} + C_{0}^{1/4})(A_{2}(T_{0})^{3/4} + E(T_{0})^{3/4}) \Big).
\end{split}
\end{align}
Using similar arguments as for term $\int_{\mathbb{R}^{3}}\sigma |\nabla B|^{2} |u|^{2} \, dx$, we have
\begin{align}\label{lemma5esti7}
\begin{split}
& \int_{\mathbb{R}^{3}} \sigma |\nabla u|^{2} |B|^{2} \, dx + \int_{\mathbb{R}^{3}} \sigma |\nabla B|^{2} |B|^{2} \, dx   \\
& \quad\quad\quad
\leq C A_{1}(T_{0}) \Big( C_{0}^{1/2} A_{1}(T_{0})^{3/4} + A_{1}(T_{0})^{1/4} (A_{2}(T_{0}) + E(T_{0}))^{3/4} \Big).
\end{split}
\end{align}
Summing up (\ref{lemma5esti6}) and (\ref{lemma5esti7}), we obtain
\begin{align*}
E(T_{0}) & \leq C A_{1}(T_{0}) \Big( C_{0}^{1/2} A_{1}(T_{0})^{3/4} + C_{0}^{1/2}   \\
& \quad\quad\quad\quad
+ (A_{1}(T_{0})^{1/4} + C_{0}^{1/4})(A_{2}(T_{0})^{3/4} + E(T_{0})^{3/4}) \Big) \\
& \lesssim C_{0}^{1/2} A_{1}(T_{0})^{7/4} + C_{0}^{1/2} A_{1}(T_{0}) + A_{1}(T_{0})^{5/4} A_{2}(T_{0})^{3/4}  \\
& \quad
+ A_{1}(T_{0})^{5/4} E(T_{0})^{3/4} + A_{1}(T_{1}) C_{0}^{1/4} A_{2}(T_{0})^{3/4} + C_{0}^{1/4} A_{1}(T_{0}) E(T_{0})^{3/4}.
\end{align*}
Using Young's inequality, we can get
\begin{align}\label{lemma5esti8}
\begin{split}
E(T_{0}) \leq & C C_{0}^{\frac{1}{2}} A_{1}(T_{0})^{\frac{7}{4}} + C C_{0}^{\frac{1}{2}} A_{1}(T_{0}) + C C_{0} A_{1}(T_{0})^{4}  \\
& + C A_{1}(T_{0})^{5} + C A_{1}(T_{0})^{\frac{5}{4}} A_{2}(T_{0})^{\frac{3}{4}} + C C_{0}^{\frac{1}{4}} A_{1}(T_{0})A_{2}(T_{0})^{\frac{3}{4}}.
\end{split}
\end{align}
Through (\ref{lemma5esti8}), we could easily obtain our desired result.
\end{proof}

At this stage, we could give the estimate about term $\sigma \|\nabla^{2}B\|_{L^{2}}^{2}$ as follows
\begin{align}\label{refinedestimates}
\begin{split}
\int_{\mathbb{R}^{3}} \sigma |\nabla^{2}B|^{2} \, dx & \leq C \int_{\mathbb{R}^{3}} \sigma |B_{t}|^{2} \, dx
+ C \int_{\mathbb{R}^{3}} \sigma |\nabla B|^{2} |u|^{2} + \sigma |\nabla u|^{2} |B|^{2} \, dx   \\
& \leq C A_{2}(T_{0}) + C E(T_{0})      \\
& \leq C (1 + C_{0}^{1/2}) A_{2}(T_{0}) + C A_{1}(T_{0})^{5/4} A_{2}(T_{0})^{3/4}   \\
& \quad
+ C C_{0}^{1/4} A_{1}(T_{0}) A_{2}(T_{0})^{3/4} + C A_{1}(T_{0})^{5} + C C_{0} A_{1}(T_{0})^{4},
\end{split}
\end{align}
where we used Lemma \ref{hofflemma5}.

\begin{lemma}\label{hofflemma4}
\begin{align*}
H(T_{0}) \leq C P_{HC}(C_{0}) + C P_{HA}(A(T_{0})),
\end{align*}
where
$P_{HC}$ is a polynomial function with lowest order $\frac{3}{4}$ and
$P_{HA}$ is a polynomial function with lowest order $\frac{9}{8}$.
\end{lemma}
\begin{proof}
We need to estimate every term appeared in the definition of $H(T_{0})$.
For $\int_{0}^{T_{0}}\int_{\mathbb{R}^{3}} \sigma |\nabla B|^{4} \, dx dt$, we have
\begin{align}\label{lemma4est1}
\begin{split}
\int_{0}^{T_{0}}\int_{\mathbb{R}^{3}} \sigma |\nabla B|^{4} \, dx dt & \leq C \int_{0}^{T_{0}}\sigma \|\nabla B\|_{L^{2}} \|\nabla^{2}B\|^{3}_{L^{2}} \, dt \\
& \leq C A_{1}(T_{0})^{1/2} \sigma^{1/2}\|\nabla^{2}B\|_{L^{2}} \int_{0}^{T_{0}} \sigma \|\nabla^{2}B\|^{2}_{L^{2}} \, dt.
\end{split}
\end{align}
For the last term of the above inequality, we have
\begin{align}\label{lemma4est2}
\begin{split}
\int_{0}^{T_{0}}\int_{\mathbb{R}^{3}} \sigma |\nabla^{2}B|^{2} \, dx dt
& \leq C \int_{0}^{T_{0}}\int_{\mathbb{R}^{3}} \sigma ( |B_{t}|^{2} + |\nabla B|^{2} |u|^{2} + |\nabla u|^{2} |B|^{2}) \, dx dt \\
& \lesssim A_{1}(T_{0}) + \int_{0}^{T_{0}}\int_{\mathbb{R}^{3}} \sigma (|\nabla B|^{2}|u|^{2} + |\nabla u|^{2}|B|^{2}) \, dx dt.
\end{split}
\end{align}
For the last two terms in the above inequality, we firstly have
\begin{align}\label{lemma4est3}
\begin{split}
& \int_{0}^{T_{0}}\int_{\mathbb{R}^{3}} \sigma |\nabla B|^{2} |u|^{2} \, dx dt    \\
\leq & \int_{0}^{T_{0}} \left( \int_{\mathbb{R}^{3}} \sigma |\nabla B|^{4} \, dx \right)^{1/2}
\left( \int_{\mathbb{R}^{3}} \sigma |u|^{4} \, dx \right)^{1/2} dt  \\
\leq & \left( \int_{0}^{T_{0}}\int_{\mathbb{R}^{3}} \sigma |\nabla B|^{4} \, dx dt \right)^{1/2}
\left( \int_{0}^{T_{0}}\int_{\mathbb{R}^{3}} \sigma |u|^{4} \, dx dt \right)^{1/2}  \\
\leq & H(T_{0})^{1/2}
\left( \int_{0}^{T_{0}} \left( \int_{\mathbb{R}^{3}} |u|^{2} \, dx \right)^{1/2} \left( \int_{\mathbb{R}^{3}} |\nabla u|^{2} \, dx \right)^{3/2} dt \right)^{1/2}   \\
\leq & H(T_{0})^{1/2} C_{0}^{1/4} \left( \int_{\mathbb{R}^{3}} |\nabla u|^{2} \, dx \right)^{1/4}
\left( \int_{0}^{T_{0}}\int_{\mathbb{R}^{3}} |\nabla u|^{2} \, dx dt \right)^{1/2}  \\
\leq & H(T_{0})^{1/2} C_{0}^{3/4} A_{1}(T_{0})^{1/4}.
\end{split}
\end{align}
Similarly, we could get
\begin{align}\label{lemma4est4}
\int_{0}^{T_{0}}\int_{\mathbb{R}^{3}} \sigma |\nabla u|^{2} |B|^{2} \, dx dt \leq H(T_{0})^{1/2} C_{0}^{3/4} A_{1}(T_{0})^{1/4}.
\end{align}
Substitute (\ref{lemma4est3}) and (\ref{lemma4est4}) into (\ref{lemma4est2}), we will have
\begin{align}\label{lemma4est5}
\int_{0}^{T_{0}}\int_{\mathbb{R}^{3}} \sigma |\nabla^{2}B|^{2} \, dx dt
\leq C A_{1}(T_{0}) + C H(T_{0})^{1/2} C_{0}^{3/4} A_{1}(T_{0})^{1/4}.
\end{align}
Substitute estimates (\ref{refinedestimates}) and (\ref{lemma4est5}) into (\ref{lemma4est1}), we obtain
\begin{align}\label{lemma4est6}
\begin{split}
\int_{0}^{T_{0}}\int_{\mathbb{R}^{3}} \sigma |\nabla B|^{4} \, dx dt \leq & C A_{1}(T_{0})^{1/2}
((1+C_{0})^{1/2}A_{2}(T_{0})    \\
& + A_{1}(T_{0})^{5/4}A_{2}(T_{0})^{3/4} + C_{0}^{1/4}A_{1}(T_{0})A_{2}(T_{0})^{3/4}    \\
& + A_{1}(T_{0})^{5} + C_{0}A_{1}(T_{0})^{4} )^{1/2}( A_{1}(T_{0})  \\
& + C_{0}^{3/4}A_{1}(T_{0})^{1/4}H(T_{0})^{1/2} )
\end{split}
\end{align}
For the term $\int_{0}^{T_{0}}\int_{\mathbb{R}^{3}} \sigma |\nabla u|^{4} \, dx dt$, we have
\begin{align}\label{lemma4est7}
\begin{split}
\int_{0}^{T_{0}}\int_{\mathbb{R}^{3}} \sigma |\nabla u|^{4} \, dx dt \lesssim
\int_{0}^{T_{0}}\int_{\mathbb{R}^{3}} \sigma (|\rho - \bar{\rho}|^{4} + |F|^{4} + |\omega|^{4}) \, dx dt.
\end{split}
\end{align}
Concerning the first term on the right hand side of the above inequality, we have
\begin{align}\label{lemma4est8}
\int_{0}^{T_{0}}\int_{\mathbb{R}^{3}} \sigma |\rho - \bar{\rho}|^{4} \, dx dt \lesssim C_{0} + \int_{0}^{T_{0}}\int_{\mathbb{R}^{3}} \sigma |F|^{4} \, dx dt.
\end{align}
For the second term on the right hand side of (\ref{lemma4est7}), we have
\begin{align}\label{lemma4est9}
\begin{split}
& \int_{0}^{T_{0}}\int_{\mathbb{R}^{3}}\sigma |F|^{4} \, dx dt \leq \int_{0}^{T_{0}} \sigma \|F\|_{L^{2}} \|\nabla F\|^{3}_{L^{2}} \, dt  \\
& \quad\quad\quad
\leq \left( \int_{\mathbb{R}^{3}}|F|^{2}\, dx \right)^{1/2} \left( \int_{\mathbb{R}^{3}}\sigma |\nabla F|^{2}\, dx \right)^{1/2}
\int_{0}^{T_{0}}\int_{\mathbb{R}^{3}} |\nabla F|^{2} \, dx dt.
\end{split}
\end{align}
Due to
\begin{align}\label{lemma4est10}
\begin{split}
\left( \int_{\mathbb{R}^{3}} |F|^{2} \, dx \right)^{1/2} & \leq \left( \int_{\mathbb{R}^{3}} |\nabla u|^{2} \, dx \right)^{1/2}
+ \left( \int_{\mathbb{R}^{3}} |\rho - \bar{\rho}|^{2} \, dx \right)^{1/2}  \\
& \leq C (C_{0} + A_{1}(T_{0}))^{1/2},
\end{split}
\end{align}
\begin{align}\label{lemma4est11}
\begin{split}
\left( \int_{\mathbb{R}^{3}} \sigma |\nabla F|^{2} \, dx \right)^{1/2} & \leq C \int_{\mathbb{R}^{3}} \sigma (|\dot{u}|^{2} + |\nabla B|^{2}|B|^{2}) \, dx  \\
& \leq C A_{2}(T_{0}) + C E(T_{0}),
\end{split}
\end{align}
and
\begin{align}\label{lemma4est12}
\begin{split}
& \int_{0}^{T_{0}}\int_{\mathbb{R}^{3}} |\nabla F|^{2} \, dx dt \leq C \int_{0}^{T_{0}} \int_{\mathbb{R}^{3}} |\dot{u}|^{2}
+ |\nabla B|^{2} |B|^{2} \, dx dt   \\
& \quad\quad\quad
\leq C A_{1}(T_{0}) + C \left( \int_{0}^{T_{0}}\int_{\mathbb{R}^{3}} |\nabla B|^{3} \, dx dt \right)^{2/3}
\left( \int_{0}^{T_{0}}\int_{\mathbb{R}^{3}} |B|^{6} \, dx dt \right)^{1/3} \\
& \quad\quad\quad
\leq C A_{1}(T_{0}) + C H(T_{0})^{2/3} C_{0}^{1/2} A_{1}(T_{0})^{1/6},
\end{split}
\end{align}
we can refine the estimate (\ref{lemma4est10}) to
\begin{align}\label{lemma4est13}
\begin{split}
\int_{0}^{T_{0}}\int_{\mathbb{R}^{3}} \sigma |F|^{4} \, dx dt \leq & C (C_{0} + A_{1}(T_{0}))^{1/2}(A_{2}(T_{0}) + E(T_{0}))
(A_{1}(T_{0})   \\
& + H(T_{0})^{2/3}C_{0}^{1/2}A_{1}(T_{0})^{1/6})
\end{split}
\end{align}
Concerning the last term on the right hand side of (\ref{lemma4est7}), we have
\begin{align}\label{lemma4est14}
\begin{split}
& \int_{0}^{T_{0}}\int_{\mathbb{R}^{3}} \sigma |\omega|^{4} \, dx dt  \leq \int_{0}^{T_{0}} \sigma \|\omega\|_{L^{2}} \|\omega\|^{3}_{L^{6}} \, dt    \\
& \quad\quad
\leq \|\omega\|_{L^{2}}\int_{\mathbb{R}^{3}}\sigma |\nabla\omega|^{2} \, dx \int_{0}^{T_{0}}\int_{\mathbb{R}^{3}} |\nabla\omega|^{2} \, dx dt   \\
& \quad\quad
\leq C A_{1}(T_{0})^{1/2}(A_{2}(T_{0}) + E(T_{0}))\Big( \int_{0}^{T_{0}}\int_{\mathbb{R}^{3}}\rho |\dot{u}|^{2} \, dx dt    \\
& \quad\quad\quad
+ \int_{0}^{T_{0}}\int_{\mathbb{R}^{3}} |\nabla B|^{2} |B|^{2} \, dx dt  \Big)   \\
& \quad\quad
\leq C A_{1}(T_{0})^{1/2} (A_{2}(T_{0}) + E(T_{0})) \Big( A_{1}(T_{0})  \\
& \quad\quad\quad
+ \big( \int_{0}^{T_{0}}\int_{\mathbb{R}^{3}}|\nabla B|^{3} \, dx dt \big)^{2/3}
\big( \int_{0}^{T_{0}}\int_{\mathbb{R}^{3}} |B|^{6} \, dx dt \big)^{1/3} \Big)      \\
& \quad\quad
\leq C A_{1}(T_{0})^{1/2} (A_{2}(T_{0}) + E(T_{0})) (A_{1}(T_{0}) + C_{0}^{1/3}H(T_{0})^{2/3}A_{1}(T_{0})^{1/3}).
\end{split}
\end{align}
Substitute (\ref{lemma4est8}), (\ref{lemma4est13}) and (\ref{lemma4est14}) into (\ref{lemma4est7}), we obtain that
\begin{align}\label{lemma4est15}
\begin{split}
& \int_{0}^{T_{0}}\int_{\mathbb{R}^{3}} \sigma |\nabla u|^{4} \, dx dt \leq C C_{0}  +
C (C_{0} + A_{1}(T_{0}))^{1/2} (A_{2}(T_{0})    \\
& \quad\quad\quad\quad\quad\quad\quad\quad\quad\quad\,
+ E(T_{0})) (A_{1}(T_{0})
+ C_{0}^{1/2}A_{1}(T_{0})^{1/6}H(T_{0})^{2/3} )   \\
& \quad\quad\quad\quad\quad\quad\quad\quad\quad\quad\,
 + C A_{1}(T_{0})^{1/2} (A_{2}(T_{0}) + E(T_{0})) (A_{1}(T_{0})     \\
& \quad\quad\quad\quad\quad\quad\quad\quad\quad\quad\,
+ C_{0}^{1/3} A_{1}(T_{0})^{1/3}H(T_{0})^{2/3}).
\end{split}
\end{align}
Concerning the term $\int_{0}^{T_{0}}\int_{\mathbb{R}^{3}}|\nabla B|^{3}\, dx dt$, we have
\begin{align}\label{lemma4est16}
\begin{split}
& \int_{0}^{T_{0}}\int_{\mathbb{R}^{3}} |\nabla B|^{3} \, dx dt \leq C \int_{0}^{T_{0}} \|\nabla B\|^{3/2}_{L^{2}} \|\nabla^{2}B\|^{3/2}_{L^{2}} \, dt    \\
& \quad\quad
\leq \left( \int_{0}^{T_{0}} \|\nabla B\|^{6}_{L^{2}}\, dt \right)^{1/4} \left( \int_{0}^{T_{0}} \|\nabla^{2}B\|^{2}_{L^{2}} \, dt \right)^{3/4}  \\
& \quad\quad
\leq C \|\nabla B\|_{L^{2}} \left( \int_{0}^{T_{0}}\int_{\mathbb{R}^{3}} |\nabla B|^{2} \, dx dt \right)^{1/4}
\left( \int_{0}^{T_{0}}\int_{\mathbb{R}^{3}} |\nabla^{2}B|^{2} \, dx dt \right)^{3/4}   \\
& \quad\quad
\leq C A_{1}(T_{0})^{1/2} C_{0}^{1/4}
\left( \int_{0}^{T_{0}}\int_{\mathbb{R}^{3}} |B_{t}|^{2} + |\nabla B|^{2} |u|^{2} + |\nabla u|^{2} |B|^{2} \, dx dt \right)^{3/4}   \\
& \quad\quad
\leq C A_{1}(T_{0})^{1/2} C_{0}^{1/4} \left( A_{1}(T_{0})^{3/4} + C_{0}^{2/3} A_{1}(T_{0})^{4/3} H(T_{0})^{1/3} \right).
\end{split}
\end{align}
For the term $\int_{0}^{T_{0}}\int_{\mathbb{R}^{3}}|\nabla u|^{3}\, dx dt$, we have
\begin{align}\label{lemma4est17}
\begin{split}
& \int_{0}^{T_{0}}\int_{\mathbb{R}^{3}} |\nabla u|^{3} \, dx dt \leq \int_{0}^{\sigma(T_{0})} \int_{\mathbb{R}^{3}} |\nabla u|^{3} \, dx dt
+ \int_{\sigma(T_{0})}^{T_{0}} \int_{\mathbb{R}^{3}} |\nabla u|^{3} \, dx dt        \\
& \quad\quad\quad\quad
\leq \int_{0}^{\sigma(T_{0})} \|\nabla u\|^{3/2}_{L^{2}}( \|\rho\dot{u}\|^{3/2}_{L^{2}} + \|P(\rho) - P(\bar{\rho})\|^{3/2}_{L^{6}} ) \, dt   \\
& \quad\quad\quad\quad\quad
+ \left( \int_{\sigma(T_{0})}^{T_{0}}\int_{\mathbb{R}^{3}} |\nabla u|^{2} dx dt \right)^{1/2}
\left( \int_{\sigma(T_{0})}^{T_{0}}\int_{\mathbb{R}^{3}} |\nabla u|^{4} dx dt \right)^{1/2} \\
& \quad\quad\quad\quad
\leq C C_{0}^{1/4}A_{1}(T_{0})^{3/4} + A_{1}(T_{0})^{3/2} + C C_{0}^{1/2} H(T_{0})^{1/2}.
\end{split}
\end{align}
Combining estimates (\ref{lemma4est6}), (\ref{lemma4est15}), (\ref{lemma4est16}), (\ref{lemma4est17}) and (\ref{lemma5esti8}),
we could finish the proof by a long but tedious calculations.
\end{proof}

Now, combining estimate (\ref{refinedestimates}), Lemma \ref{hofflemma5}, Lemma \ref{hofflemma4} and Lemma \ref{hofflemma6}, we will obtain
\begin{align}\label{estimateforA1A2}
A_{1}(T_{0}) + A_{2}(T_{0}) \leq C P_{AC}(C_{0}) + C P_{AA}(A_{1}(T_{0}) + A_{2}(T_{0})),
\end{align}
where $P_{AC}(\cdot)$ is a polynomial function with lowest order $\frac{3}{4}$ and $P_{AA}(\cdot)$ is a polynomial function
with lowest order $\frac{9}{8}$.
Since $A_{1}(T_{0}) + A_{2}(T_{0}) \leq \epsilon_{0}^{1/2}$ and
$C_{0} = \|\rho_{0} - \bar{\rho}\|^{2}_{L^{2}} + \|u_{0}\|^{2}_{H^{1}} + \|B_{0}\|^{2}_{H^{1}} \leq \epsilon_{0}$,
if $\epsilon_{0} > 0$ is small enough, we have
\begin{align}\label{estimateforAfinal}
\begin{split}
A_{1}(T_{0}) + A_{2}(T_{0}) & \leq C \epsilon_{0}^{1/4}\epsilon_{0}^{1/2} + C \epsilon_{0}^{1/16} \epsilon_{0}^{1/2}
\leq \frac{1}{2} \epsilon_{0}^{1/2}.
\end{split}
\end{align}

It remains to prove the lower and upper bound of the density.
Set $\Gamma = \log (\rho)$ which satisfies
\begin{align*}
(\lambda + \mu) \dot{\Gamma} + (P(\rho) - P(\bar{\rho})) = - F.
\end{align*}
For $0 < t < \sigma(T_{0})$, we will have
\begin{align}\label{estimaterho1}
\begin{split}
& \int_{0}^{\sigma(T_{0})} \|F\|_{L^{\infty}} \, dt \leq C \int_{0}^{\sigma(T_{0})} \|F\|^{1/2}_{L^{6}} \|\nabla F\|^{1/2}_{L^{6}} \, dt    \\
& \quad\quad\quad
\leq C \int_{0}^{\sigma(T_{0})} (\|\sqrt{\rho}\dot{u}\|^{1/2}_{L^{2}} + \|B\cdot\nabla B\|^{1/2}_{L^{2}})
(\|\rho\dot{u}\|^{1/2}_{L^{6}} + \|B\cdot\nabla B\|^{1/2}_{L^{6}}) \, dt    \\
& \quad\quad\quad
\leq C \int_{0}^{\sigma(T_{0})} (\|\sqrt{\rho}\dot{u}\|^{1/2}_{L^{2}} + \|B\cdot\nabla B\|^{1/2}_{L^{2}})
(\|\nabla\dot{u}\|^{1/2}_{L^{2}} + \|B\cdot\nabla B\|^{1/2}_{L^{6}}) \, dt.
\end{split}
\end{align}
By estimate (\ref{estimateforAfinal}), we have
\begin{align}\label{rhoes1}
\int_{0}^{\sigma(T_{0})} \|\sqrt{\rho}\dot{u}\|^{1/2}_{L^{2}} \|\nabla\dot{u}\|^{1/2}_{L^{2}} \, dt \leq C \epsilon_{0}^{1/4},
\end{align}
\begin{align}\label{rhoes2}
\begin{split}
& \int_{0}^{\sigma(T_{0})} \|B\cdot\nabla B\|^{1/2}_{L^{2}} \|\nabla\dot{u}\|^{1/2}_{L^{2}} \, dt     \\
= & \int_{0}^{\sigma(T_{0})} t^{-1/2} (\sigma \|B\cdot\nabla B\|^{2}_{L^{2}})^{1/4} (\sigma \|\nabla\dot{u}\|^{2}_{L^{2}})^{1/4} \, dt  \\
\leq & C \epsilon_{0}^{1/4} \int_{0}^{\sigma(T_{0})} t^{-1/2} (\sigma \|\nabla\dot{u}\|^{2}_{L^{2}})^{1/4} \, dt    \\
\leq & C \epsilon_{0}^{1/4} \left( \int_{0}^{\sigma(T_{0})} t^{-2/3} \, dt \right)^{3/4}
\left( \int_{0}^{\sigma(T_{0})} \sigma \|\nabla\dot{u}\|^{2}_{L^{2}} \, dt \right)^{1/4}    \\
\leq & C \epsilon_{0}^{1/4} \epsilon_{0}^{1/8} \leq C \epsilon^{3/8},
\end{split}
\end{align}
\begin{align}\label{rhoes3}
\begin{split}
& \int_{0}^{\sigma(T_{0})} \|\sqrt{\rho}\dot{u}\|^{1/2}_{L^{2}} \|B\cdot\nabla B\|^{1/2}_{L^{6}} \, dt    \\
\leq & \int_{0}^{\sigma(T_{0})} t^{-3/8} \|\sqrt{\rho}\dot{u}\|^{1/2}_{L^{2}}\|\nabla B\|^{1/4}_{L^{2}} t^{3/8} \|\nabla^{2}B\|^{3/4}_{L^{2}} \, dt \\
& + \int_{0}^{\sigma(T_{0})} t^{-1/4} \|\sqrt{\rho}\dot{u}\|^{1/2}_{L^{2}} t^{1/4} \|\nabla B\|_{L^{4}} \, dt   \\
\leq & C \epsilon_{0}^{3/8} \left( \int_{0}^{\sigma(T_{0})} t^{-3/5} \, dt \right)^{5/8}
\left( \int_{0}^{\sigma(T_{0})} t \|\nabla^{2}B\|^{2}_{L^{2}} \, dt \right)^{3/8}   \\
& + C \epsilon^{1/4} \left( \int_{0}^{\sigma(T_{0})} t^{-1/3} \, dt \right)^{3/4}
\left( \int_{0}^{\sigma(T_{0})} t \|\nabla B\|^{4}_{L^{4}} \, dt \right)^{1/4}  \\
\leq & C \epsilon_{0}^{3/8}\epsilon_{0}^{1/2} + C \epsilon_{0}^{1/4} \epsilon_{0}^{9/16} \leq C \epsilon_{0}^{3/4},
\end{split}
\end{align}
and
\begin{align}\label{rhoes4}
\begin{split}
& \int_{0}^{\sigma(T_{0})} \|B\cdot\nabla B\|^{1/2}_{L^{2}} \|B\cdot\nabla B\|^{1/2}_{L^{6}} \, dt    \\
\leq & \int_{0}^{\sigma(T_{0})} t^{-5/8} (t\|B\cdot\nabla B\|^{2}_{L^{2}})^{1/4} \|\nabla B\|^{1/4}_{L^{2}}
(t\|\nabla^{2}B\|^{2}_{L^{2}})^{3/8} \, dt  \\
& \quad\quad\quad\quad\quad\quad\quad\quad\quad\quad\,\,\,
+ \int_{0}^{\sigma(T_{0})} t^{-1/2} (t\|B\cdot\nabla B\|^{2}_{L^{2}})^{1/4} t^{1/4} \|\nabla B\|_{L^{4}} \, dt  \\
\leq & C \epsilon_{0}^{3/8} \int_{0}^{\sigma(T_{0})} t^{-5/8} (t\|\nabla^{2}B\|^{2}_{L^{2}})^{3/8} \, dt
+ C \epsilon_{0}^{1/4} \int_{0}^{\sigma(T_{0})} t^{-1/2} t^{1/4} \|\nabla B\|_{L^{4}} \, dt   \\
\leq & C \epsilon_{0}^{3/8}\epsilon_{0}^{3/16} + C \epsilon_{0}^{1/4}\epsilon_{0}^{9/16} \leq C \epsilon_{0}^{9/16},
\end{split}
\end{align}
where we used estimate (\ref{refinedestimates}) and (\ref{estimateforAfinal}).
Substitute (\ref{rhoes1}), (\ref{rhoes2}), (\ref{rhoes3}) and (\ref{rhoes4}) into (\ref{estimaterho1}), we obtain
\begin{align}\label{rhoes5}
\int_{0}^{T_{0}} \|F\|_{L^{\infty}} \, dt \leq C \epsilon_{0}^{1/4},
\end{align}
which implies that for $t \leq \sigma(T_{0})$,
\begin{align}\label{rhoes6}
\inf (\log \rho_{0}(x)) - C \epsilon_{0}^{1/4} - C t \leq \log \rho(t,x) \leq \sup (\log (\rho_{0}(x))) + C \epsilon_{0}^{1/4} + C t.
\end{align}
So we can choose $\epsilon_{0}$, $\tau$ small enough such that for $t \leq \tau \leq \sigma(T_{0})$,
\begin{align}\label{rhoes7}
\frac{3}{4}c_{0} < \rho(t,x) < \frac{3}{2}c_{0}^{-1}.
\end{align}
For $\tau \leq t_{1} \leq t_{2} \leq T_{0}$, we have
\begin{align*}
& \int_{t_{1}}^{t_{2}} \|F\|_{L^{\infty}} \, dt \leq C \int_{t_{1}}^{t_{2}} (\|\sqrt{\rho}\dot{u}\|^{1/2}_{L^{2}} + \|B\cdot\nabla B\|^{1/2}_{L^{2}})
(\|\nabla\dot{u}\|^{1/2}_{L^{2}} + \|B\cdot\nabla B\|^{1/2}_{L^{6}}) \, dt  \\
& \quad\quad\quad
\leq C (t_{2} - t_{1})^{1/2} \int_{t_{1}}^{t_{2}} \|\sqrt{\rho}\dot{u}\|^{2}_{L^{2}}
+ \|B\cdot\nabla B\|^{2}_{L^{2}} + \|\nabla\dot{u}\|^{2}_{L^{2}} + \|B\cdot\nabla B\|^{2}_{L^{6}} \, dt.
\end{align*}
Now, we estimate the four integral terms on the right hand side of the above inequality.
For the first two terms, we have
\begin{align}\label{rhoes8}
\int_{t_{1}}^{t_{2}} \|\sqrt{\rho}\dot{u}\|^{2}_{L^{2}} \, dt \leq A_{2}(T_{0}) \leq \epsilon_{0}^{1/2}, \quad
\int_{t_{1}}^{t_{2}} \|\nabla\dot{u}\|^{2}_{L^{2}} \, dt \leq A_{2}(T_{0}) \leq \epsilon_{0}^{1/2},
\end{align}
The estimate about the third term appeared in (\ref{lemma4est12}), so we have
\begin{align}\label{rhoes9}
\int_{t_{1}}^{t_{2}} \|B\cdot\nabla B\|^{2}_{L^{2}} \, dt = \int_{t_{1}}^{t_{2}}\int_{\mathbb{R}^{3}} |B|^{2}|\nabla B|^{2} \, dx dt
\leq C \epsilon_{0}^{1/2}.
\end{align}
For the fourth term, we have
\begin{align}\label{rhoes10}
\begin{split}
\int_{t_{1}}^{t_{2}} \|B\cdot\nabla B\|^{2}_{L^{6}} \, dt & \leq \int_{t_{1}}^{t_{2}}\|\nabla B\|^{1/2}_{L^{2}} \|\nabla^{2}B\|^{3/2}_{L^{2}} \, dt
+ \int_{t_{1}}^{t_{2}} \|\nabla B\|^{4}_{L^{4}} \, dt   \\
& \leq \left( \int_{t_{1}}^{t_{2}} \|\nabla B\|^{2}_{L^{2}} \, dt \right)^{1/4}
\left( \int_{t_{1}}^{t_{2}} \|\nabla^{2} B\|^{2}_{L^{2}} \, dt \right)^{3/4} + C \epsilon_{0}^{1/2} \\
& \leq C \epsilon_{0}^{1/4} \epsilon_{0}^{3/8} + C \epsilon_{0}^{1/2} \leq C \epsilon_{0}^{1/2}.
\end{split}
\end{align}
Combining estimates (\ref{rhoes8}), (\ref{rhoes9}) and (\ref{rhoes10}), we obtain
\begin{align}\label{rhoes11}
\begin{split}
\int_{t_{1}}^{t_{2}} \|F\|_{L^{\infty}} \, dt & \leq C (t_{2} - t_{1})^{1/2} \epsilon_{0}^{1/2}     \\
& \leq \eta (t_{2} - t_{1}) + C_{\eta} \epsilon_{0},
\end{split}
\end{align}
where $\eta > 0$ is a small enough positive number.
With estimate (\ref{rhoes11}) at hand, we can mimic the Navier-Stokes case \cite{NS2013Zhifei} to obtain
\begin{align}\label{rhoes12}
\frac{3}{4}c_{0} < \rho(t,x) < \frac{3}{2}c_{0}^{-1},
\end{align}
where $0 \leq t \leq T_{0}$.
With estimate (\ref{estimateforAfinal}) and (\ref{rhoes12}), we can complete the proof by continuity argument.
\end{proof}


\section{Blow up Criterion and the Global Well-Posedness}

In this part, we give a blow up criterion and then prove the local solution can be extended to a global one.
Firstly, let me give the blow up criterion as follows.
\begin{theorem}\label{blowupcriterion}
For dimension $N = 3$, let $(\rho, u, B)$ be a solution of system (\ref{origianl MHD}) satisfying
\begin{align*}
& \rho(0) > 0, \quad \rho - \bar{\rho} \in C([0,T];H^{2}),    \\
& u, B \in C([0,T];H^{2}) \cap L^{2}(0,T;H^{3}).
\end{align*}
Let $T^{*}$ be the maximal existence time of the solution. If $T^{*} < +\infty$, then it is necessary that
\begin{align*}
\limsup_{t \uparrow T^{*}} \left( \|\rho(t)\|_{L^{\infty}} + \|u(t)\|_{L^{q}} + \|B(t)\|_{L^{q}} \right) = + \infty,
\end{align*}
for any $q \geq 6$.
\end{theorem}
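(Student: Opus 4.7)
The plan is a contradiction argument. Assume $T^{*}<+\infty$ while
\[
M:=\sup_{0\le t<T^{*}}\bigl(\|\rho(t)\|_{L^{\infty}}+\|u(t)\|_{L^{q}}+\|B(t)\|_{L^{q}}\bigr)<+\infty
\]
for some $q\geq 6$. The strategy is to show that under this bound the norm $\|\rho-\bar{\rho}\|_{H^{2}}+\|(u,B)\|_{H^{2}}+\|(u,B)\|_{L^{2}(0,T^{*};H^{3})}$ stays uniformly bounded on $[0,T^{*})$. One can then pick $t_{0}$ close to $T^{*}$, apply the local existence result of Theorem \ref{local well-posedness} (in fact its $H^{s}$-propagation form in Proposition \ref{highp}) to produce a solution on $[t_{0},t_{0}+\tau]$ with $t_{0}+\tau>T^{*}$, contradicting the maximality of $T^{*}$.

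First I would run the $L^{2}$ energy identity, which is insensitive to the assumption and delivers uniform control of $\|\rho-\bar{\rho}\|_{L^{2}}$, $\|\sqrt{\rho}u\|_{L^{2}}$, $\|B\|_{L^{2}}$ and of $\int_{0}^{T^{*}}\bigl(\|\nabla u\|_{L^{2}}^{2}+\|\nabla B\|_{L^{2}}^{2}\bigr)\,dt$. Next, I would repeat Hoff's $H^{1}$ and $H^{2}$ estimates (Lemmas \ref{hofflemma2}--\ref{hofflemma3}), but with the time weight $\sigma(t)$ replaced by $1$ since the initial data already lie in $H^{2}$. The coupling terms $\int|B|^{2}|\nabla B|^{2}$, $\int|u|^{2}|\nabla B|^{2}$, $\int|B|^{2}|\dot{u}|^{2}$, etc., are handled by H\"older inequality together with the $L^{q}$ bound; e.g.
\[
\int|u|^{2}|\nabla B|^{2}\,dx\leq \|u\|_{L^{q}}^{2}\,\|\nabla B\|_{L^{\frac{2q}{q-2}}}^{2}\leq C_{M}\,\|\nabla B\|_{L^{2}}^{2\theta}\|\nabla^{2}B\|_{L^{2}}^{2(1-\theta)},
\]
with $\theta=\theta(q)\in(0,1)$, and the resulting factor $\|\nabla^{2}B\|_{L^{2}}^{2(1-\theta)}$ is absorbed by $\|B_{t}\|_{L^{2}}^{2}+\|\nabla^{2}B\|_{L^{2}}^{2}$ via Young and the magnetic diffusion term. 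This closes a Gr\"onwall inequality for $\|\nabla u\|_{L^{2}}^{2}+\|\nabla B\|_{L^{2}}^{2}+\int_{0}^{t}\bigl(\|\sqrt{\rho}\dot u\|_{L^{2}}^{2}+\|B_{t}\|_{L^{2}}^{2}\bigr)\,d\tau$ on $[0,T^{*})$.

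The third and most delicate step is to promote these $H^{1}$ bounds to $L^{1}(0,T^{*};W^{1,\infty})$ control of $u$ and $B$. Here I would use the effective viscous flux $F$ and the vorticity $\omega$ defined in (\ref{effectivedefi})--(\ref{omegadefi}), together with the elliptic identities (\ref{equforomega}) and (\ref{equforF}): the right-hand side $\rho\dot u+\nabla(\tfrac12|B|^{2})-\mathrm{div}(B\otimes B)$ is estimated in $L^{2}$ and $L^{r}$ using the bounds already derived plus $\|B\|_{L^{q}}\leq M$ to treat $B\cdot\nabla B$. Combining Calder\'on--Zygmund bounds on $F$ and $\omega$ with Br\'ezis--Wainger type logarithmic Sobolev embeddings then yields $\int_{0}^{T^{*}}\|\nabla u\|_{L^{\infty}}\,dt<\infty$, which simultaneously (i) transports the upper and lower bounds of $\rho$ via the continuity equation in Lagrangian form and (ii) propagates $\|\rho-\bar{\rho}\|_{H^{2}}$ through the standard transport estimate. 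Parabolic smoothing in the induction equation together with the $L^{q}$ bound on $u$ gives the analogous $L^{1}(0,T^{*};W^{1,\infty})$ control on $B$.

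The main obstacle is precisely this logarithmic step: closing $\int\|\nabla u\|_{L^{\infty}}\,dt$ without assuming anything on $\nabla u$ or $\nabla B$ a priori. Once it is achieved, differentiating the momentum and induction equations in space and time and redoing the $L^{2}$ energy argument — with all nonlinear products dominated by $\|\rho\|_{L^{\infty}}$, $\|(u,B)\|_{L^{q}}$, $\|(\nabla u,\nabla B)\|_{L^{\infty}}$ and the $H^{2}$ bounds already in hand — produces uniform $H^{2}$ bounds on $(\rho-\bar{\rho},u,B)$ and an $L^{2}(0,T^{*};H^{3})$ bound on $(u,B)$. By continuity these norms have a limit as $t\uparrow T^{*}$, so the local theory applies beyond $T^{*}$ and the contradiction is reached, completing the proof of Theorem \ref{blowupcriterion}.
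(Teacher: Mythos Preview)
Your contradiction setup and your treatment of the magnetic coupling terms (interpolating $\|\nabla B\|_{L^{2q/(q-2)}}$ against the $L^{q}$ hypothesis) are exactly what the paper does. The divergence is in how the $H^{1}$ estimate is first closed and how Lipschitz control on $u$ is eventually obtained.

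Rather than running Lemmas~\ref{hofflemma2}--\ref{hofflemma3} with $\sigma\equiv1$, the paper introduces $v:=L^{-1}\nabla P(\rho)$ solving the Lam\'e system $\mu\Delta v+\lambda\nabla\mathrm{div}\,v=\nabla P(\rho)$, sets $w:=u-v$, and tests the \emph{pressure--free} equation for $w$ against $\partial_{t}w$. Since $\|\nabla v\|_{L^{r}}\lesssim\|P(\rho)-P(\bar\rho)\|_{L^{r}}$ is bounded for every finite $r$ by the assumption $\|\rho\|_{L^{\infty}}\le M$ alone, this yields $\|\nabla w\|_{L^{\infty}_{T}L^{2}}+\|\sqrt{\rho}\,\partial_{t}w\|_{L^{2}_{T}L^{2}}+\|\nabla^{2}w\|_{L^{2}_{T}L^{2}}\le C$ directly, hence $\nabla u\in L^{2}_{T}L^{r}$ for $r\in[2,6]$. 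Only after this does the paper invoke the weighted $H^{2}$ estimate of Lemma~\ref{hofflemma3} (weight $\sigma$ kept, not removed); with $\nabla u\in L^{2}_{T}L^{6}$ already in hand the terms $\int|\nabla u|^{3}$ and $\int\sigma|\nabla u|^{4}$ are harmless, and one obtains $\nabla\dot u,\nabla B_{t}\in L^{2}_{T}L^{2}$, hence $\nabla^{2}w\in L^{2}_{T}L^{6}$ from the elliptic system for $w$. A Gr\"onwall argument on $\|\nabla\rho\|_{L^{6}}$ (deferred to \cite{NS2013Zhifei}) together with the embedding $W^{1,6}(\mathbb R^{3})\hookrightarrow L^{\infty}$ then gives $\nabla u\in L^{2}_{T}L^{\infty}$, and the $H^{2}$ bound on $\rho-\bar\rho$ closes by Gr\"onwall.

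Compared with your plan, this decomposition removes the pressure before the first energy estimate, so the cubic and quartic velocity terms $\int|\nabla u|^{3}$ and $\int|\nabla u|^{4}$ from Lemmas~\ref{hofflemma2}--\ref{hofflemma3}---which your proposal does not address, since you discuss only the magnetic couplings---never obstruct the $H^{1}$ closure. It also replaces your Br\'ezis--Wainger step by a plain Sobolev embedding applied to $\nabla w$; any residual logarithmic ingredient is confined to the $\nabla\rho$ estimate borrowed from the Navier--Stokes literature. Your $F,\omega$ plus log--Sobolev route is a legitimate alternative in the spirit of \cite{blowup12011}, but it is more delicate to close, and you have not indicated how to break the circularity you yourself flag as the main obstacle.
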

\begin{proof}
We use the contradiction argument. Assume that $T^{*} < + \infty$ and
\begin{align}\label{blowup1}
\sup_{t \in [0,T^{*})} \left( \|\rho(t)\|_{L^{\infty}} + \|u(t)\|_{L^{q}} + \|B(t)\|_{L^{q}} \right) = M < + \infty.
\end{align}
In what follows, we denote $C$ to be a constant depending on $T$, $M$, $\|u_{0}\|_{H^{2}}$, $\|B_{0}\|_{H^{2}}$,
$\|\rho_{0} - \bar{\rho}\|_{H^{2}}$.
Firstly, from the energy estimates, we have
\begin{align}\label{blowup2}
\begin{split}
\int_{\mathbb{R}^{3}} |\rho - \bar{\rho}|^{2} + \rho |u|^{2} + |B|^{2} \, dx
+ \int_{0}^{T}\int_{\mathbb{R}^{3}} |\nabla u|^{2} + |\nabla B|^{2} \, dx dt \leq C.
\end{split}
\end{align}
Considering both (\ref{blowup1}) and (\ref{blowup2}), for $r \in [2, \infty]$, we have
\begin{align}\label{blowup3}
\begin{split}
\|\rho - \bar{\rho}\|_{L^{\infty}_{T}(L^{r})} + \|\sqrt{\rho}u\|_{L_{T}^{\infty}(L^{2})}
& + \|B\|_{L_{T}^{\infty}(L^{2})}     \\
& + \|\nabla u\|_{L_{T}^{2}(L^{2})} + \|\nabla B\|_{L_{T}^{2}(L^{2})} \leq C.
\end{split}
\end{align}
Let $v = L^{-1}\nabla P(\rho)$ to be a solution of the following elliptic system
\begin{align}\label{blowup4}
\begin{split}
L v := \mu \Delta v + \lambda \nabla \mathrm{div} v = \nabla P(\rho).
\end{split}
\end{align}
By elliptic estimate, for $r \in [2, \infty]$, we can obtain
\begin{align}\label{blowup5}
\|\nabla v\|_{L^{r}} \leq C \|P(\rho) - P(\bar{\rho})\|_{L^{r}} \leq C, \quad
\|\nabla^{2}v\|_{L^{r}} \leq C \|\nabla \rho\|_{L^{r}}.
\end{align}
Now, we introduce a new unknown $w = u - v$. We can easily know that
\begin{align*}
(\lambda + \mu)\Delta\mathrm{div}w = (\lambda + \mu)\Delta\mathrm{u} - \Delta (P(\rho) - P(\bar{\rho})),
\end{align*}
and $\mathrm{div}w$ is called effective viscous flux which was used by many authors\cite{NS2013Zhifei,mhd2012hoff,hoff1995,hoff1997}.
Through simple calculations, we have
\begin{align}\label{blowup6}
\rho \partial_{t}w - \mu \Delta w - \lambda \nabla \mathrm{div} w = \rho F + B\cdot\nabla B - \frac{1}{2} \nabla (|B|^{2}),
\end{align}
where
\begin{align}\label{blowup7}
F = - u\cdot\nabla u + L^{-1}\nabla\mathrm{div}(P(\rho)u) + L^{-1}\nabla ((\rho P'(\rho) - P(\rho))\mathrm{div}u).
\end{align}
Multiply (\ref{blowup6}) with $\partial_{t}w$ and integrating by parts, we have
\begin{align}\label{blowup8}
\begin{split}
\partial_{t} \int_{\mathbb{R}^{3}} \mu |\nabla w|^{2} + \lambda |\mathrm{div}w|^{2} \, dx & + \frac{1}{2} \int_{\mathbb{R}^{3}} \rho |\partial_{t}w|^{2} \, dx \\
& \leq C \|\sqrt{\rho}F\|_{L^{2}}^{2} + C \|B\cdot\nabla B\|_{L^{2}}^{2}.
\end{split}
\end{align}
For the first term on the right hand side of the above inequality, we can estimate as in the Navier-Stokes equations \cite{blowup12011,blowup22011} to obtain
\begin{align}\label{blowup9}
\|\sqrt{\rho}F\|_{L^{2}} \leq C (1+\|\nabla u\|_{L^{2}}) + \epsilon \|\sqrt{\rho}\partial_{t}w\|_{L^{2}},
\end{align}
where $\epsilon$ is a small enough positive number.
Next, we need to estimate $\|B\cdot\nabla B\|_{L^{2}}$, $\|u\cdot\nabla B\|_{L^{2}}$, $\|B\cdot\nabla u\|_{L^{2}}$ and $\|u\cdot\nabla u\|_{L^{2}}$
to make our later estimate more clear.
For the term $\|B\cdot\nabla B\|_{L^{2}}$, we have
\begin{align*}
\|B\cdot\nabla B\|_{L^{2}} & \leq C \|B\|_{L^{q}} \|\nabla B\|_{L^{2q/(q-2)}}   \\
& \leq C \|\nabla B\|_{L^{2}} + \epsilon \|\nabla^{2}B\|_{L^{2}}    \\
& \leq C \|\nabla B\|_{L^{2}} + \epsilon \|\partial_{t}B\|_{L^{2}} + \epsilon \|u\cdot\nabla B\|_{L^{2}} + \epsilon \|B\cdot\nabla u\|_{L^{2}}.
\end{align*}
Similarly, we have
\begin{align*}
\|u\cdot\nabla B\|_{L^{2}} \leq C \|\nabla B\|_{L^{2}} + \epsilon \|\partial_{t}B\|_{L^{2}} + \epsilon \|u\cdot\nabla B\|_{L^{2}}
+ \epsilon \|B\cdot\nabla u\|_{L^{2}}.
\end{align*}
As in the Navier-Stokes equations \cite{blowup12011,blowup22011}, we can easily obtain
\begin{align*}
& \|B\cdot\nabla u\|_{L^{2}} \leq C + C \|\nabla u\|_{L^{2}} + \epsilon \|\sqrt{\rho}\partial_{t}w\|_{L^{2}}    \\
& \|u\cdot\nabla u\|_{L^{2}} \leq C + C \|\nabla u\|_{L^{2}} + \epsilon \|\sqrt{\rho}\partial_{t}w\|_{L^{2}}.
\end{align*}
So summing up the above four estimates, we obtain
\begin{align}\label{blowup10}
\begin{split}
\|B\cdot\nabla B\|_{L^{2}} & + \|B\cdot\nabla u\|_{L^{2}} + \|u\cdot\nabla B\|_{L^{2}} + \|u\cdot\nabla u\|_{L^{2}} \\
& \leq C + C \|\nabla u\|_{L^{2}} + C \|\nabla B\|_{L^{2}} + \epsilon \|\partial_{t}B\|_{L^{2}} + \epsilon \|\sqrt{\rho}\partial_{t}w\|_{L^{2}}.
\end{split}
\end{align}
Substitute (\ref{blowup9}) and (\ref{blowup10}) into (\ref{blowup8}), we will have
\begin{align}\label{blowup11}
\begin{split}
\partial_{t}\int_{\mathbb{R}^{3}} \mu |\nabla w|^{2} & + \lambda |\mathrm{div}w|^{2} \, dx + \int_{\mathbb{R}^{3}} \rho |\partial_{t}w|^{2} \, dx     \\
& \leq C (1 + \|\nabla u\|_{L^{2}}^{2} + \|\nabla B\|_{L^{2}}^{2}) + \epsilon \|\sqrt{\rho}\partial_{t}w\|_{L^{2}}^{2}
+ \epsilon \|\partial_{t}B\|_{L^{2}}^{2}.
\end{split}
\end{align}
Multiply $\partial_{t}B$ to the third equation of system (\ref{origianl MHD}) and integrate by parts, we will obtain
\begin{align}\label{blowup12}
\begin{split}
\partial_{t}\int_{\mathbb{R}^{3}} |\nabla B|^{2} \, dx & + \int_{\mathbb{R}^{3}} |\partial_{t}B|^{2} \, dx    \\
& \leq C (1 + \|\nabla u\|_{L^{2}}^{2} + \|\nabla B\|_{L^{2}}^{2}) + \epsilon \|\sqrt{\rho}\partial_{t}w\|_{L^{2}}^{2}
+ \epsilon \|\partial_{t}B\|_{L^{2}}^{2},
\end{split}
\end{align}
where we used estimate (\ref{blowup10}).
If $\epsilon$ is small enough, summing up (\ref{blowup11}) and (\ref{blowup12}), we will have
\begin{align*}
\partial_{t}\int_{\mathbb{R}^{3}} \mu |\nabla w|^{2} + \lambda |\mathrm{div}w|^{2} & + |\nabla B|^{2} \, dx
+ \int_{\mathbb{R}^{3}} \rho |\partial_{t}w|^{2} \, dx  \\
& + \int_{\mathbb{R}^{3}} |\partial_{t}B|^{2} \, dx \leq C (1 + \|\nabla u\|_{L^{2}}^{2} + \|\nabla B\|_{L^{2}}^{2}).
\end{align*}
Integrate the above inequality for time variable, we will obtain
\begin{align}\label{blowup13}
\begin{split}
\|\nabla w\|_{L_{T}^{\infty}(L^{2})} + \|\nabla B\|_{L_{T}^{\infty}(L^{2})} & + \|\sqrt{\rho}\partial_{t}w\|_{L_{T}^{2}(L^{2})} \\
& + \|\partial_{t}B\|_{L_{T}^{2}(L^{2})} + \|\nabla^{2}w\|_{L_{T}^{2}(L^{2})} \leq C.
\end{split}
\end{align}
The above inequality combined with (\ref{blowup5}) implies that
\begin{align}\label{blowup14}
\begin{split}
\|\nabla u\|_{L_{T}^{\infty}(L^{2})} + \|\nabla u\|_{L_{T}^{2}(L^{r})} \leq C \quad \text{for} \quad r \in [2,6].
\end{split}
\end{align}
For the term $\|\nabla^{2}B\|_{L_{T}^{2}(L^{2})}$, we will have
\begin{align}\label{blowup15}
\begin{split}
\|\nabla^{2}B\|_{L_{T}^{2}(L^{2})} & \leq C \|\partial_{t}B\|_{L_{T}^{2}(L^{2})} + \|u\cdot\nabla B\|_{L_{T}^{2}(L^{2})}
+ \|B\cdot\nabla u\|_{L_{T}^{2}(L^{2})} \\
& \leq C < + \infty,
\end{split}
\end{align}
where we used (\ref{blowup10}) and (\ref{blowup13}).
Hence, we have $\|\nabla B\|_{L_{T}^{2}(L^{2})} \leq C < + \infty$ by interpolation.
Now let us turn to the high order energy estimate. From the $H^{2}$ energy estimate (Lemma \ref{hofflemma3}), we have
\begin{align}\label{blowup16}
\begin{split}
& \int_{\mathbb{R}^{3}} \sigma(t) (\rho |\dot{u}|^{2} + |B_{t}|^{2}) \, dx
+ \int_{0}^{T}\int_{\mathbb{R}^{3}} \sigma(t) (|\nabla\dot{u}|^{2} + |\nabla B_{t}|^{2}) \, dx dt   \\
&
\leq C C_{0} + C \int_{0}^{T}\int_{\mathbb{R}^{3}} \sigma(t) |\nabla u|^{4} \, dx dt + C \int_{0}^{T} \int_{\mathbb{R}^{3}} |\nabla u|^{3} \, dx dt \\
& \quad
+ C \int_{0}^{T}\int_{\mathbb{R}^{3}} (|\nabla B|^{2}|B|^{2} + |\nabla B|^{2}|u|^{2} + |\nabla u|^{2}|B|^{2}) \, dx dt  \\
& \quad
+ C \int_{0}^{T}\int_{\mathbb{R}^{3}}\sigma(t)(|u|^{2} + |B|^{2})(|B_{t}|^{2} + |\dot{u}|^{2}
+ |u|^{2}(|\nabla B|^{2} + |\nabla u|^{2})) \, dx dt.
\end{split}
\end{align}
Noting that
\begin{align}\label{blowup17}
\begin{split}
\mu \Delta w + \lambda \nabla \mathrm{div} w = \rho \dot{u} - B\cdot\nabla B + \frac{1}{2}\nabla(|B|^{2}),
\end{split}
\end{align}
elliptic estimate yields that
\begin{align}\label{blowup18}
\begin{split}
\|\nabla^{2}w\|_{L^{2}} \leq C \|\rho\dot{u}\|_{L^{2}} + C \|B\cdot\nabla B\|_{L^{2}}.
\end{split}
\end{align}
From (\ref{blowup10}) and (\ref{blowup13}), we obtain
\begin{align}\label{blowup19}
\begin{split}
\int_{0}^{T}\int_{\mathbb{R}^{3}} |\nabla B|^{2} |B|^{2} + |\nabla B|^{2} |u|^{2} + |\nabla u|^{2} |B|^{2} \, dx dt \leq C < +\infty.
\end{split}
\end{align}
Since
\begin{align}\label{blowup20}
\begin{split}
\int_{0}^{T} & \int_{\mathbb{R}^{3}}\sigma(t) |B|^{2} |u|^{2}|\nabla B|^{2} \, dx dt     \\
& \leq \int_{0}^{T}\left( \int_{\mathbb{R}^{3}} |B|^{3}|u|^{3} \, dx \right)^{2/3} \left( \int_{\mathbb{R}^{3}} |\nabla B|^{6} \, dx \right)^{1/3} dt   \\
& \leq \sup_{0 \leq t \leq T}\left( \int_{\mathbb{R}^{3}} |B|^{3} |u|^{3} \, dx \right)^{2/3} \int_{0}^{T} \|\nabla B\|_{L^{6}}^{2} \, dt    \\
& \leq C \|B\|_{L_{T}^{\infty}(L^{6})}^{2} \|u\|_{L_{T}^{\infty}(L^{6})}^{2} \|\nabla B\|_{L_{T}^{2}(L^{6})}^{1/2} \leq C < + \infty,
\end{split}
\end{align}
we can use similar methods to obtain
\begin{align}\label{blowup21}
\begin{split}
\int_{0}^{T}\int_{\mathbb{R}^{3}} \sigma(t) (|u|^{2} + |B|^{2}) |u|^{2} (|\nabla B|^{2} + |\nabla u|^{2}) \, dx dt \leq C < +\infty.
\end{split}
\end{align}
Similar to the Navier-Stokes system \cite{NS2013Zhifei}, we can obtain
\begin{align}\label{blowup22}
\begin{split}
\|\nabla u\|_{L^{4}}^{4} \leq C \|\nabla u\|_{L^{6}}^{2} (1 + \|\sqrt{\rho}\dot{u}\|_{L^{2}}), \quad
\|\nabla u\|_{L^{3}}^{3} \leq C \|\nabla u\|_{L^{6}}^{3/2}.
\end{split}
\end{align}
Plugging (\ref{blowup19}), (\ref{blowup21}) and (\ref{blowup22}) into (\ref{blowup16}) and noting $\|\nabla u(t)\|_{L^{6}} \in L^{2}(0,T)$
by (\ref{blowup14}) and (\ref{blowup15}), we deduce by Gronwall's inequality that
\begin{align}\label{blowup23}
\begin{split}
\int_{\mathbb{R}^{3}} \sigma(t)(\rho |\dot{u}|^{2} + |B_{t}|^{2}) \, dx
+ \int_{0}^{T}\int_{\mathbb{R}^{3}} \sigma(t)(|\nabla\dot{u}|^{2} + |\nabla B_{t}|^{2}) \, dx dt \leq C.
\end{split}
\end{align}
From the above inequality, elliptic estimate (\ref{blowup18}) and Sobolev inequality, we have
\begin{align}\label{blowup24}
\begin{split}
\|\nabla^{2}w\|_{L_{T}^{2}(L^{r})} \leq C, \quad \text{for} \quad r \in [2,6].
\end{split}
\end{align}
Using same methods as for Navier-Stokes system \cite{NS2013Zhifei}, we have
\begin{align}\label{blowup25}
\begin{split}
\|\nabla^{2}u\|_{L_{T}^{2}(L^{r})} \leq C, \quad \|\nabla \rho\|_{L^{r}} \leq C, \quad \text{for} \quad r \in [2,6].
\end{split}
\end{align}
Since
\begin{align*}
\|\nabla^{2}B\|_{L^{2}} & \leq C ( \|B_{t}\|_{L^{2}} + \|u\|_{L^{\infty}} \|\nabla B\|_{L^{2}} + \|B\|_{L^{\infty}} \|\nabla u\|_{L^{2}} )  \\
& \leq C + \frac{1}{2} \|\nabla^{2}u\|_{L^{2}} + \frac{1}{4} \|\nabla^{2}B\|_{L^{2}},
\end{align*}
and
\begin{align*}
\|\nabla^{2}u\|_{L^{2}} & \leq C \|\nabla^{2}w\|_{L^{2}} + C \|\nabla^{2}v\|_{L^{2}}    \\
& \leq C ( \|\rho\dot{u}\|_{L^{2}} + \|B\|_{L^{\infty}} \|\nabla B\|_{L^{2}} + \|\nabla \rho\|_{L^{2}} )    \\
& \leq C + \frac{1}{4} \|\nabla^{2}B\|_{L^{2}},
\end{align*}
we have
\begin{align}\label{blowup26}
\|\nabla^{2}u\|_{L^{2}} + \|\nabla^{2}B\|_{L^{2}} \leq C.
\end{align}
By the above estimates, we easily know $\|u\|_{L^{\infty}} + \|B\|_{L^{\infty}} \leq C$ by interpolation,
so we will obtain
\begin{align}\label{blowup27}
\begin{split}
\|\nabla^{3}B\|_{L_{T}^{2}(L^{2})} \leq & C ( \|\nabla B_{t}\|_{L_{T}^{2}(L^{2})} + \|u\|_{L^{\infty}}\|\nabla^{2}B\|_{L_{T}^{2}(L^{2})}  \\
& + \|B\|_{L^{\infty}} \|\nabla^{2}u\|_{L_{T}^{2}(L^{2})} + \|\nabla u\, \nabla B\|_{L_{T}^{2}(L^{2})} ) \leq C,
\end{split}
\end{align}
where we also used
\begin{align*}
\|\nabla u\,\nabla B\|_{L_{T}^{2}(L^{2})} & \leq \left( \int_{0}^{T} \left( \int_{\mathbb{R}^{3}} |\nabla u|^{4} \, dx \right)^{1/2}
\left( \int_{\mathbb{R}^{3}} |\nabla B|^{4} \, dx \right)^{1/2} dt \right)^{1/2}    \\
& \leq C \|\nabla u\|_{L_{T}^{\infty}(L^{4})} \|\nabla u\|_{L_{T}^{2}(L^{4})} + C \|\nabla B\|_{L_{T}^{\infty}(L^{4})} \|\nabla B\|_{L_{T}^{2}(L^{4})}  \\
& \leq C < + \infty.
\end{align*}
Then, similar to Navier-Stokes system, we can get
\begin{align}\label{blowup28}
\int_{0}^{T} \|\nabla^{3}w\|_{L^{2}}^{2} \, dt \leq C.
\end{align}
From (\ref{blowup5}), (\ref{blowup26}), (\ref{blowup27}) and (\ref{blowup28}), we will obtain
\begin{align}\label{blowup29}
\begin{split}
\|\nabla^{2}u(t)\|_{L^{2}}^{2} + \|\nabla^{2}B(t)\|_{L^{2}}^{2} + \int_{0}^{T} \|\nabla^{3}u\|_{L^{2}}^{2} & + \|\nabla^{3}B\|_{L^{2}}^{2} \, dt  \\
& \leq C + C \int_{0}^{T} \|\nabla^{2}\rho(t)\|_{L^{2}}^{2} \, dt.
\end{split}
\end{align}
Form the continuity equation, we have
\begin{align}\label{blowup30}
\begin{split}
\frac{d}{dt} \|\nabla^{2}(\rho(t) - \bar{\rho})\|_{L^{2}}^{2} \leq & C (1+ \|\nabla u(t)\|_{L^{\infty}}) \|\nabla^{2}(\rho(t) - \bar{\rho})\|_{L^{2}}^{2} \\
& + C \|\nabla^{3}u(t)\|_{L^{2}}^{2}.
\end{split}
\end{align}
Summing up (\ref{blowup29}) and (\ref{blowup30}), we conclude by Gronwall's inequality that for $0 \leq t < T^{*}$,
\begin{align*}
\|\rho(t) - \bar{\rho}\|_{H^{2}} + \|u(t)\|_{H^{2}} + \|B(t)\|_{H^{2}} \leq C.
\end{align*}
This ensures that the solution can be continued after $t = T^{*}$.
\end{proof}

After we get the blow up criterion, we can give the proof about Theorem \ref{mainexistence}.
From Theorem \ref{local well-posedness}, Proposition \ref{lowp} and Proposition \ref{highp}, we obtain a solution $(\rho, u, B)$ of (\ref{origianl MHD})
satisfying
\begin{align*}
& \frac{c_{0}}{2} \leq \rho \leq 2 c_{0}^{-1}, \quad \rho - \bar{\rho} \in \tilde{L}^{\infty}_{T}(\dot{B}_{p,1}^{3/p} \cap \dot{B}_{2,2}^{s}),    \\
& u, B \in \tilde{L}^{\infty}_{T}(\dot{B}_{p,1}^{3/p}\cap \dot{B}_{2,2}^{s-1}) \cap \tilde{L}^{1}_{T}(\dot{B}_{p,1}^{3/p+1}\cap \dot{B}_{2,2}^{s+1}).
\end{align*}
Moreover, it holds that
\begin{align}\label{global1}
\|\rho - \bar{\rho}\|_{\tilde{L}_{T}^{\infty}(\dot{B}_{2,2}^{1-\delta})} \leq C
(\|\rho_{0} - \bar{\rho}\|_{H^{1-\delta}} + \|u_{0}\|_{\dot{H}^{-\delta}} + \|B_{0}\|_{\dot{H}^{-\delta}} ),
\end{align}
\begin{align}\label{global2}
\begin{split}
& \|(u, B)\|_{\tilde{L}_{T}^{\infty}(\dot{B}_{2,2}^{-\delta})} + \|(u, B)\|_{\tilde{L}_{T}^{1}(\dot{B}_{2,2}^{2-\delta})}   \\
& \quad\quad
\leq C (1 + \|a_{0}\|_{\dot{B}_{p,1}^{3/p}} )( \|u_{0}\|_{\dot{H}^{-\delta}} + \|B_{0}\|_{\dot{H}^{-\delta}}
+ T P_{+} \|\rho_{0} - \bar{\rho}\|_{H^{1-\delta}} ),
\end{split}
\end{align}
\begin{align}\label{global3}
\begin{split}
\|a\|_{\tilde{L}_{T}^{\infty}(\dot{B}_{p,1}^{3/p})} \leq 2 \|a_{0}\|_{\dot{B}_{p,1}^{3/p}} + \frac{c_{2} + c_{3}}{(1 + \|a_{0}\|_{\dot{B}_{p,1}^{3/p}})^{5}},
\end{split}
\end{align}
\begin{align}\label{global4}
\begin{split}
\|u\|_{\tilde{L}_{T}^{\infty}(\dot{B}_{p,1}^{3/p-1})} + \|u\|_{\tilde{L}_{T}^{1}(\dot{B}_{p,1}^{3/p+1})}
\leq \frac{c_{2} + c_{3}}{(1 + \|a_{0}\|_{\dot{B}_{p,1}^{3/p}})^{5}},
\end{split}
\end{align}
\begin{align}\label{global5}
\begin{split}
\|B\|_{\tilde{L}_{T}^{\infty}(\dot{B}_{p,1}^{3/p-1})} + \|B\|_{\tilde{L}_{T}^{1}(\dot{B}_{p,1}^{3/p+1})}
\leq \frac{2 c_{3}}{(1 + \|a_{0}\|_{\dot{B}_{p,1}^{3/p}})^{2}}.
\end{split}
\end{align}
Here and in what follows, the constant $C$ depends only on $\lambda, \mu, c_{0}, \bar{\rho}, p, s$.
Recall that the initial data satisfies
\begin{align*}
& \|\rho_{0} - \bar{\rho}\|_{L^{2}} \leq c_{1}, \quad \|u_{0}\|_{\dot{B}_{p,1}^{3/p}} \leq \frac{c_{2}}{(1 + \|a_{0}\|_{\dot{B}_{p,1}^{3/p}} )^{5}},
\quad \|B_{0}\|_{\dot{B}_{p,1}^{3/p}} \leq \frac{c_{2}}{(1 + \|a_{0}\|_{\dot{B}_{p,1}^{3/p}} )^{2}},  \\
& \|u_{0}\|_{\dot{H}^{-\delta}} \leq \frac{c_{2}}{( 1 + \|a_{0}\|_{\dot{B}_{p,1}^{3/p}} )^{5}( 1 + \|\rho_{0}-\bar{\rho}\|_{H^{2}}^{8} + \|u_{0}\|_{L^{2}}^{4/3}
+ \|B_{0}\|_{L^{2}}^{4/3} )} = \tilde{c}_{2},   \\
& \|B_{0}\|_{\dot{H}^{-\delta}} \leq \frac{c_{3}}{( 1 + \|a_{0}\|_{\dot{B}_{p,1}^{3/p}} )^{7/3}( 1 + \|\rho_{0}-\bar{\rho}\|_{H^{2}}^{8} + \|u_{0}\|_{L^{2}}^{4/3}
+ \|B_{0}\|_{L^{2}}^{4/3} )} = \tilde{c}_{3}.
\end{align*}
Taking $c_{2}$ small enough, by Remark \ref{lower bound of time} and (\ref{the form T}), the existence time $T$ has a lower bound
\begin{align}\label{global6}
T \geq \frac{c}{(1 + \|a_{0}\|_{\dot{B}_{p,1}^{3/p}})^{4}(1 + \|a_{0}\|_{H^{2}})^{12}},
\end{align}
where $c$ is a small positive number.

Now, for any $T_{1} \leq T$, we have
\begin{align}\label{global7}
\begin{split}
& \|(u, B)\|_{\tilde{L}_{T_{1}}^{\infty}(\dot{B}_{2,2}^{-\delta})} + \|(u, B)\|_{\tilde{L}_{T_{1}}^{1}(\dot{B}_{2,2}^{2-\delta})}     \\
& \quad\quad
\leq C (1 + \|a_{0}\|_{\dot{B}_{p,1}^{3/p}}) (\tilde{c}_{2} + \tilde{c}_{3} + P_{+}T_{1}\|\rho_{0} - \bar{\rho}\|_{H^{1-\delta}}),
\end{split}
\end{align}
by the conditions on the solution just mentioned.
For $r \in (1, 2-\delta)$, we have
\begin{align}\label{global8}
\begin{split}
& \|\nabla u\|_{L_{T_{1}}^{r}(L^{2})} \leq \sum_{j = - \infty}^{+ \infty} 2^{j} \|\Delta_{j}u\|_{L_{T_{1}}^{2}(L^{2})}    \\
& \quad\quad
\leq \sum_{j \leq 0} 2^{j}T_{1}^{1/r}\|\Delta_{j}u\|_{L_{T}^{\infty}(L^{2})} + \sum_{j > 0}2^{j}\|\Delta_{j}u\|_{L_{T_{1}}^{\infty}(L^{2})}^{1-1/r}
\|\Delta_{j}u\|_{L_{T_{1}}^{1}(L^{2})}^{1/r}    \\
& \quad\quad
\leq C (\|u\|_{\tilde{L}_{T_{1}}}^{\infty}(\dot{B}_{2,2}^{-\delta}) + \|u\|_{\tilde{L}_{T_{1}}^{1}(\dot{B}_{2,2}^{2-\delta})} ) \\
& \quad\quad
\leq C (1 + \|a_{0}\|_{\dot{B}_{p,1}^{3/p}}) (\tilde{c}_{4} + \tilde{c}_{5} + P_{+}T_{1}\|\rho_{0} - \bar{\rho}\|_{H^{2}} ).
\end{split}
\end{align}
Similarly, we have
\begin{align}\label{global9}
\begin{split}
\|u\|_{L_{T_{1}}^{r}(L^{2})}
\leq C (1 + \|a_{0}\|_{\dot{B}_{p,1}^{3/p}}) (\tilde{c}_{4} + \tilde{c}_{5} + P_{+}T_{1}\|\rho_{0} - \bar{\rho}\|_{H^{2}} ),
\end{split}
\end{align}
\begin{align}\label{global10}
\begin{split}
\|\nabla B\|_{L_{T_{1}}^{r}(L^{2})}
\leq C (1 + \|a_{0}\|_{\dot{B}_{p,1}^{3/p}}) (\tilde{c}_{4} + \tilde{c}_{5} + P_{+}T_{1}\|\rho_{0} - \bar{\rho}\|_{H^{2}} ),
\end{split}
\end{align}
and
\begin{align}\label{global11}
\begin{split}
\|B\|_{L_{T_{1}}^{r}(L^{2})}
\leq C (1 + \|a_{0}\|_{\dot{B}_{p,1}^{3/p}}) (\tilde{c}_{4} + \tilde{c}_{5} + P_{+}T_{1}\|\rho_{0} - \bar{\rho}\|_{H^{2}} ).
\end{split}
\end{align}
Then taking $r = \frac{3}{2}$, we have
\begin{align}\label{global12}
\begin{split}
\|(u, B)\|_{L_{T_{1}}^{3/2}(H^{1})} \leq C (1 + \|a_{0}\|_{\dot{B}_{p,1}^{3/p}}) (\tilde{c}_{4} + \tilde{c}_{5} + P_{+}T_{1} \|\rho_{0} - \bar{\rho}\|_{H^{2}}).
\end{split}
\end{align}
Hence, there exist $t_{0} \in (0, T_{1})$ such that
\begin{align}\label{global13}
\begin{split}
\|(u, B)(t_{0})\|_{H^{1}} \leq C (1 + \|a_{0}\|_{\dot{B}_{p,1}^{3/p}}) \left( \frac{\tilde{c}_{4} + \tilde{c}_{5}}{T_{1}^{2/3}}
+ P_{+}T_{1}^{1/3}\|\rho_{0}-\bar{\rho}\|_{H^{2}} \right).
\end{split}
\end{align}
For density, using simply energy estimates, we will get
\begin{align}\label{global14}
\|\rho(t) - \bar{\rho}\|_{L^{2}} \leq C (1+ P_{+}) (c_{1} + T_{1}^{1/2} (\|u_{0}\|_{L^{2}} + \|B_{0}\|_{L^{2}})).
\end{align}
Take $T_{1}$ as
\begin{align*}
T_{1} = \frac{\epsilon_{0}^{3}}{(5C)^{3}(1+P_{+})^{3}(1+\|\rho_{0}-\bar{\rho}\|_{H^{2}}^{12}+\|(u_{0}, B_{0})\|_{L^{2}}^{2})(1+\|a_{0}\|_{\dot{B}_{p,1}^{3/p}})^{4}},
\end{align*}
such that $T_{1} \leq T$ and
\begin{align*}
& C (1+\|a_{0}\|_{\dot{B}_{p,1}^{3/p}})P_{+}T_{1}^{1/3}\|\rho_{0}-\bar{\rho}\|_{H^{2}} \leq \frac{\epsilon_{0}}{5},   \\
& C (1+P_{+})T_{1}^{1/2}(\|u_{0}\|_{L^{2}} + \|B_{0}\|_{L^{2}}) \leq \frac{\epsilon_{0}}{5}.
\end{align*}
Then, we choose $c_{1}$, $c_{2}$ and $c_{3}$ small enough so that
\begin{align*}
C (1 + P_{+})c_{1} \leq \frac{\epsilon_{0}}{5}, \quad \frac{C ( 1 + \|a_{0}\|_{\dot{B}_{p,1}^{3/p}} )\tilde{c}_{2}}{T_{1}^{2/3}} \leq \frac{\epsilon_{0}}{5},
\quad \frac{C ( 1 + \|a_{0}\|_{\dot{B}_{p,1}^{3/p}} )\tilde{c}_{3}}{T_{1}^{2/3}} \leq \frac{\epsilon_{0}}{5}.
\end{align*}
Through this choice of $c_{1}$, $c_{2}$ and $c_{3}$, by (\ref{global13}) and (\ref{global14}), we have
\begin{align*}
\|a(t_{0})\|_{L^{2}} + \|u(t_{0})\|_{H^{1}} + \|B(t_{0})\|_{H^{1}} \leq \epsilon_{0}.
\end{align*}
Theorem \ref{hofftheorem} implies that
\begin{align*}
\frac{c_{0}}{2} \leq \rho \leq 2 c_{0}^{-1}, \quad \|u(t)\|_{L_{T}^{\infty}(L^{6})} + \|u(t)\|_{L_{T}^{\infty}(L^{6})} \leq C.
\end{align*}
So the solution can be extended to a global one by Theorem \ref{blowupcriterion}.


\section{Appendix}

In this section, for the reader's convenience, we firstly give the Lagrangian transform for terms appearing in the MHD system as follows
\begin{align}\label{transform}
\begin{split}
& \widetilde{\nabla_{x}(|B|^{2})} = J^{-1} \mathrm{div}_{y}(\mathrm{adj}(DX)|\tilde{B}|^{2}),    \\
& \widetilde{B\cdot\nabla_{x}B} = \widetilde{\mathrm{div}_{x}(B\otimes B)} = J^{-1}\mathrm{div}_{y}(\mathrm{adj}(DX)^{T}\tilde{B}\otimes \tilde{B}),  \\
& \widetilde{\mathrm{div}_{x}u B} = J^{-1}\mathrm{div}_{y}(\mathrm{adj}(DX)\tilde{u})\tilde{B},  \\
& \widetilde{(\mathrm{div}_{x}u)B} + \widetilde{u\cdot\nabla_{x}B} = \widetilde{\nabla_{x}(uB)} = J^{-1}\mathrm{div}_{y}(\mathrm{adj}(DX)\tilde{u}\tilde{B}), \\
& \widetilde{B\cdot\nabla_{x}u} = \widetilde{\mathrm{div}_{x}(B\otimes u)} = J^{-1}\mathrm{div}_{y}(\mathrm{adj}(DX)^{T}\tilde{u}\otimes \tilde{B}).
\end{split}
\end{align}

Then, let us look at the following Lam\'{e} system with nonconstant coefficients
\begin{align}\label{varheat}
\partial_{t}v - 2 a \mathrm{div}(\mu \mathcal{D}(v)) - b \nabla (\lambda \mathrm{div}v) = f.
\end{align}
We assume that the following uniform ellipticity condition is satisfied:
\begin{align}\label{condition}
\alpha := \mathrm{min}\left( \inf_{(t,x)\in [0,T]\times \mathbb{R}^{N}}(a\mu)(t,x), \, \inf_{(t,x)\in [0,T]\times \mathbb{R}^{N}}(2a\mu + b\lambda)(t,x) \right) > 0
\end{align}
Concerning this equation, we have the following lemma.
\begin{lemma}\label{variable heat eq}\cite{LagrangianCompressible,jiamei}
Let $a, b, \lambda$ and $\mu$ be bounded functions satisfies (\ref{condition}). Assume that $a\nabla \mu$, $b\nabla \lambda$, $\mu\nabla a$
and $\lambda\nabla b$ are in $L^{\infty}(0,T; \dot{B}_{p}^{n/p})$ for some $1 < p < \infty$. There exist two constants $\eta$ and $\kappa$ such that
if for some $m \in \mathbb{Z}$ we have
\begin{align}\label{coninlinear}
\begin{split}
& \mathrm{min}\left( \inf_{(t,x) \in [0,T]\times \mathbb{R}^{N}}S_{m}(2a\mu +b\lambda)(t,x),
\,\inf_{(t,x) \in [0,T]\times \mathbb{R}^{N}}S_{m}(a\mu)(t,x) \right) \geq \frac{\alpha}{2},        \\
& \quad\quad\quad \|(I - S_{m})(\mu\nabla a, a\nabla \mu, \lambda \nabla b, b\nabla\lambda)\|_{L_{T}^{\infty}(\dot{B}_{p}^{n/p-1})} \leq \eta \alpha,
\end{split}
\end{align}
then the solutions to (\ref{varheat}) satisfy for all $t \in [0, T]$,
\begin{align*}
& \|v\|_{L_{t}^{\infty}(\dot{B}_{p}^{s})} + \alpha \|v\|_{L_{t}^{1}(\dot{B}_{p}^{s+2})}   \\
& \quad \leq C \left( \|v_{0}\|_{\dot{B}_{p}^{s}} + \|f\|_{L_{t}^{1}(\dot{B}_{p}^{s})} \right)\exp\left( \frac{C}{\alpha}
\int_{0}^{t} \|S_{m}(\mu\nabla a, a\nabla \mu, \lambda\nabla b, b\nabla \lambda)\|_{\dot{B}_{p}^{n/p}}^{2}\,d\tau
\right),
\end{align*}
whenever $- \min (n/p, n/p') < s \leq n/p-1$ is satisfied.
\end{lemma}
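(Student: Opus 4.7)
The plan is to establish this maximal regularity estimate by combining a Littlewood–Paley localization with a frozen-coefficient strategy. First I would apply the dyadic block $\Delta_q$ to equation (\ref{varheat}) and rewrite it as
\[
\partial_{t}\Delta_{q}v - 2\,\mathrm{div}\bigl(S_{m}(a\mu)\,\mathcal{D}(\Delta_{q}v)\bigr) - \nabla\bigl(S_{m}(b\lambda)\,\mathrm{div}\,\Delta_{q}v\bigr) = \Delta_{q}f + R_{q}^{1} + R_{q}^{2},
\]
where $R_{q}^{1}$ collects the commutators $[\Delta_{q},S_{m}(a\mu)]\mathcal{D}(v)$ and $[\Delta_{q},S_{m}(b\lambda)]\nabla\mathrm{div}\,v$, while $R_{q}^{2}$ collects the high-frequency coefficient contributions $\Delta_{q}\bigl((I-S_{m})(a\mu)\mathcal{D}(v)\bigr)$ and the analogue for $b\lambda$. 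The advantage of this splitting is that on the left the coefficients are smooth (after truncation by $S_{m}$) and bounded below by $\alpha/2$, while on the right the coefficients appear with either one derivative cost absorbed by a commutator or with the smallness provided by $(I-S_{m})$.

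Next I would perform an $L^{p}$ energy estimate by testing against $|\Delta_{q}v|^{p-2}\Delta_{q}v$. The ellipticity assumption (\ref{coninlinear}) together with standard Bernstein inequalities produces a coercive dissipation of order $\alpha\,2^{2q}\|\Delta_{q}v\|_{L^{p}}^{p}$, at the price of integration-by-parts terms involving $\nabla S_{m}(a\mu)$, $\nabla S_{m}(b\lambda)$, etc. Those latter terms are estimated by the $\dot{B}_{p}^{n/p}$ norm of $S_{m}(\mu\nabla a,a\nabla\mu,\lambda\nabla b,b\nabla\lambda)$ times lower-order norms of $v$, and Young's inequality converts them into a quadratic-in-$\|S_{m}(\cdots)\|_{\dot{B}_{p}^{n/p}}$ coefficient times $\|\Delta_{q}v\|_{L^{p}}$, which is exactly what will produce the exponential factor on the right-hand side after Gronwall. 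The $R_{q}^{2}$ terms, thanks to the smallness condition $\|(I-S_{m})(\cdots)\|_{L_{T}^{\infty}(\dot{B}_{p}^{n/p-1})}\le\eta\alpha$ and standard product laws, are absorbed by the dissipation if $\eta$ is chosen small enough.

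The third step is to multiply by $2^{qs}$, sum over $q\in\mathbb{Z}$, and integrate in time. The commutator terms $R_{q}^{1}$ are handled by a classical commutator estimate of the form
\[
\bigl\|[\Delta_{q},h]\nabla g\bigr\|_{L^{p}}\lesssim c_{q}2^{-qs}\|\nabla h\|_{\dot{B}_{p}^{n/p}}\|g\|_{\dot{B}_{p}^{s+1}},\qquad \sum_{q}c_{q}\le 1,
\]
valid precisely on the range $-\min(n/p,n/p')<s\le n/p-1$ dictated by paraproduct/remainder analysis. This integrability range is the source of the constraint on $s$ in the statement. Once all pieces are summed, one arrives at a scalar differential inequality of the form
\[
\frac{d}{dt}\|v\|_{\dot{B}_{p}^{s}} + \alpha\|v\|_{\dot{B}_{p}^{s+2}} \le \|f\|_{\dot{B}_{p}^{s}} + \frac{C}{\alpha}\,\bigl\|S_{m}(\mu\nabla a,a\nabla\mu,\lambda\nabla b,b\nabla\lambda)\bigr\|_{\dot{B}_{p}^{n/p}}^{2}\|v\|_{\dot{B}_{p}^{s}},
\]
whose resolution by Gronwall's inequality yields exactly the claimed bound.

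The main obstacle is controlling the commutator terms uniformly in $q$ while keeping a clean dependence on $\alpha$ and $\eta$. In particular, the truncation parameter $m$ must be chosen large enough, depending on $a,b,\lambda,\mu$, so that (\ref{coninlinear}) holds; once that is secured, the remaining analysis is a careful bookkeeping of paraproducts, but the subtle point is verifying that the commutator estimates hold in the full range $-\min(n/p,n/p')<s\le n/p-1$, since outside that window the paraproduct $T_{h}g$ loses its continuity properties and the scheme breaks down. This explains why the hypothesis on $s$ is sharp.
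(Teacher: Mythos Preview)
The paper does not actually prove this lemma: it is stated in the Appendix with a citation to \cite{LagrangianCompressible,jiamei} and no argument is given. So there is no ``paper's own proof'' to compare against.

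That said, your sketch is essentially the standard proof one finds in those references. The frozen-coefficient decomposition via $S_{m}$, the $L^{p}$ energy estimate on each dyadic block yielding dissipation of size $\alpha 2^{2q}$, the commutator bound $\|[\Delta_{q},h]\nabla g\|_{L^{p}}\lesssim c_{q}2^{-qs}\|\nabla h\|_{\dot{B}_{p}^{n/p}}\|g\|_{\dot{B}_{p}^{s+1}}$ on the stated range of $s$, absorption of the high-frequency remainder by the smallness of $\eta$, and the final Gronwall step are exactly the ingredients used by Danchin and collaborators. One point worth tightening: the $L^{p}$ energy step for the Lam\'e operator (as opposed to the pure Laplacian) requires a bit more care to extract the full coercivity $\alpha 2^{2q}$, since the cross term $\nabla(S_{m}(b\lambda)\mathrm{div}\,\Delta_{q}v)$ does not pair as cleanly with $|\Delta_{q}v|^{p-2}\Delta_{q}v$ when $p\neq 2$; in the cited works this is handled either by a Bernstein-type argument after an additional spectral localization, or by working separately on the compressible and incompressible parts. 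Your outline glosses over this, but it is a technical refinement rather than a gap in the strategy.
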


\begin{lemma}\label{heat eq}\cite{danchin local 2001}
Let $p\in [1,+\infty]$, $1\leq \rho_{2} \leq \rho_{1} \leq +\infty$, and let $v$ solve
\begin{align*}
\begin{cases}
\partial_{t} v - \nu \triangle v = f, \\
v|_{t=0} = v_{0}.
\end{cases}
\end{align*}
Denote $\rho_{1}' = (1+1/\rho_{1} -1/\rho_{2})^{-1}$. Then there exist two positive constants $c$ and $C$ depending only on $N$ and
such that
\begin{align*}
\|v\|_{\tilde{L}_{T}^{\rho_{1}}(\dot{B}_{p}^{s+2/\rho_{1}})} \leq &\, \bigg( \sum_{q\in \mathbb{Z}} 2^{qs} \norm{\Delta_{q}v_{0}}_{L^{p}}
\left( \frac{1-e^{-c \nu T 2^{2q} \rho_{1}}}{c\nu \rho_{1}} \right)^{1/\rho_{1}}    \\
&\, \sum_{q\in \mathbb{Z}} 2^{q(s-2+2/\rho_{2})} \norm{\Delta_{q}f}_{L_{T}^{\rho_{2}}(L^{p})}\left( \frac{1-e^{-c\nu T2^{2q}\rho_{1}'}}
{c\nu \rho_{1}'} \right)^{1/\rho_{1}'}
 \bigg).
\end{align*}
In particular, we have
\begin{align*}
\norm{v}_{\tilde{L}_{T}^{\rho_{1}}(\dot{B}_{p}^{s+2/\rho_{1}})} \leq \frac{C}{\nu^{1/\rho_{1}}} \norm{v_{0}}_{\dot{B}_{p}^{s}}
+ \frac{C}{\nu^{1/\rho_{1}'}} \norm{f}_{\tilde{L}_{T}^{\rho_{2}}(\dot{B}_{p}^{s-2+2/\rho_{2}})}.
\end{align*}
Moreover $u$ belongs to $C([0,T]; \dot{B}_{p}^{s})$.
\end{lemma}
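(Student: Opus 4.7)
The plan is to prove the estimate one dyadic block at a time via Duhamel's formula and the heat semigroup's smoothing on frequency-localized data, then sum with the Chemin--Lerner weights. First I apply $\Delta_{q}$ to the equation to obtain
\begin{equation*}
\partial_{t}\Delta_{q}v - \nu\Delta(\Delta_{q}v) = \Delta_{q}f, \qquad \Delta_{q}v|_{t=0} = \Delta_{q}v_{0},
\end{equation*}
so Duhamel gives $\Delta_{q}v(t) = e^{\nu t \Delta}\Delta_{q}v_{0} + \int_{0}^{t} e^{\nu(t-\tau)\Delta}\Delta_{q}f(\tau)\,d\tau$. The key ingredient is the Bernstein-type heat smoothing bound, which follows from the fact that $\Delta_{q}$ localizes frequencies to an annulus of radius $\sim 2^{q}$: there exists $c=c(N)>0$ with
\begin{equation*}
\|e^{\nu t \Delta}\Delta_{q}u\|_{L^{p}} \leq C e^{-c\nu t 2^{2q}}\|\Delta_{q}u\|_{L^{p}}, \qquad t\geq 0,\ q\in\mathbb{Z},
\end{equation*}
uniformly in $p\in[1,\infty]$. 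I would take this as standard (it is the Fourier multiplier lemma applied to a symbol supported in an annulus; the constant $c$ comes from the lower bound $|\xi|\geq 3/4\cdot 2^{q}$ on the support of $\phi(2^{-q}\xi)$).

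Next I would take the $L^{\rho_{1}}(0,T)$ norm in time of the resulting pointwise bound on $\|\Delta_{q}v(t)\|_{L^{p}}$. For the homogeneous part this gives
\begin{equation*}
\|\Delta_{q}v_{\mathrm{hom}}\|_{L^{\rho_{1}}_{T}(L^{p})} \leq C \|\Delta_{q}v_{0}\|_{L^{p}} \cdot 2^{-2q/\rho_{1}}\left(\frac{1-e^{-c\nu T 2^{2q}\rho_{1}}}{c\nu \rho_{1}}\right)^{1/\rho_{1}},
\end{equation*}
where the time integral of $e^{-c\nu t 2^{2q}\rho_{1}}$ is computed explicitly. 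For the Duhamel part I would apply Young's convolution inequality on $(0,T)$ with exponents satisfying $1 + 1/\rho_{1} = 1/\rho_{2} + 1/\rho_{1}'$ (which is the defining relation for $\rho_{1}'$ in the statement); then the convolution of $e^{-c\nu t 2^{2q}}$ (taken in $L^{\rho_{1}'}$) with $\|\Delta_{q}f\|_{L^{p}}$ (taken in $L^{\rho_{2}}$) yields
\begin{equation*}
\|\Delta_{q}v_{\mathrm{inhom}}\|_{L^{\rho_{1}}_{T}(L^{p})} \leq C \|\Delta_{q}f\|_{L^{\rho_{2}}_{T}(L^{p})} \cdot 2^{-2q/\rho_{1}'}\left(\frac{1-e^{-c\nu T 2^{2q}\rho_{1}'}}{c\nu \rho_{1}'}\right)^{1/\rho_{1}'}.
\end{equation*}
Multiplying by $2^{q(s+2/\rho_{1})}$ and summing over $q\in\mathbb{Z}$ gives the first displayed inequality directly, since the factor $2^{-2q/\rho_{1}'}$ combines with $2^{q\cdot 2/\rho_{1}}$ to produce $2^{q(2/\rho_{1}-2/\rho_{1}')} = 2^{q(2/\rho_{2}-2)}$ after using $2/\rho_{1} - 2/\rho_{1}' = 2/\rho_{2}-2$.

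To obtain the simplified consequence I would bound $(1-e^{-c\nu T 2^{2q}\rho})/(c\nu\rho) \leq 1/(c\nu\rho)$ uniformly, yielding the prefactors $\nu^{-1/\rho_{1}}$ and $\nu^{-1/\rho_{1}'}$ with absolute constants. For the time continuity $v\in C([0,T];\dot B_{p}^{s})$, the plan is the usual density argument: for $v_{0}\in\mathcal{S}$ and $f$ smooth, continuity of $e^{\nu t\Delta}$ in $\dot B_{p}^{s}$ is classical; for general data one approximates and uses the estimate just proved with $\rho_{1}=\infty$ and $s$ (rather than $s+2/\rho_{1}$) to pass to the limit. The main obstacle I expect is purely bookkeeping: getting the dyadic exponent arithmetic in Young's inequality to line up so that the factor $2^{q(s-2+2/\rho_{2})}$ appears on $\Delta_{q}f$ in the final sum, and keeping track that the constants depend only on $N$ (and not on $p,s,\rho_{1},\rho_{2}$ in a hidden way). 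No other genuine difficulty is present, since the frequency localization decouples the problem across scales and turns the PDE estimate into an ODE-style bound block by block.
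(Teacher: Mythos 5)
Your proof is correct and is essentially the standard argument from the cited source (the paper itself states this lemma without proof, quoting Danchin's 2001 local theory paper): frequency localization, Duhamel, the exponential decay $\|e^{\nu t\Delta}\Delta_{q}u\|_{L^{p}}\leq Ce^{-c\nu t2^{2q}}\|\Delta_{q}u\|_{L^{p}}$ for annulus-supported data, Young's convolution inequality in time with the exponent relation $1+1/\rho_{1}=1/\rho_{2}+1/\rho_{1}'$, and summation with the Chemin--Lerner weights. The dyadic bookkeeping $2/\rho_{1}-2/\rho_{1}'=2/\rho_{2}-2$ is verified correctly, so nothing further is needed.
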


\section*{Acknowledgments}
J. Peng and J. Jia's research is support partially by National Natural Science Foundation of China under the grant no.11131006,
and by the National Basic Research Program of China under the grant no.2013CB329404.


\begin{thebibliography}{1}

\bibitem{FourierPDE}Bahouri, H., Chemin, J. and Danchin, R., \textit{Fourier Analysis and Nonlinear Partial Differential Equations},
Grundlehren der mathematischen Wissenschaften, Springer, 2011.

\bibitem{chemin}Chemin, J., Lerner, N. ``Flot de champs de vecteurs non lipschitziens et$\acute{e}$quations de Navier-Stokes.''
\textit{Journal of Differential Equations} 121, no. 2 (1992): 314--328.

\bibitem{Danchin2005}Danchin, R., \textit{Fourier Analysis Methods for PDE's, Lecture Notes}, 2005 November 14.

\bibitem{NavierStokesLp}Chen, Q, Miao, C. and Zhang, Z., ``Global Well-Posedness for Compressible Navier-Stokes Equations with Highly Oscillating Initial Velocity.'' \textit{Communications on Pure and Applied Mathematics} 63, no. 9 (2010): 1173--1224.

\bibitem{NavierStokesLpDanchin}Charve, F., Danchin, R., ``A Gloal Existence Result for the Compressible Navier-Stokes equations in
the critical $L^{p}$ framework.'' \textit{Archive for Rational Mechanics and Analysis} 198, no. 1 (2010): 233--271.

\bibitem{NS2013Zhifei}Wang, C., Wang, W. and Zhang, Z., ``Global well-posedness of compressible Navier-Stokes equations for some
classes of large initial data.'' \textit{Archive for Rational Mechanics and Analysis} 213, no. 1 (2014): 171--214.

\bibitem{huxian1}Hu, X. and Wang, D., ``Global existence and large-time behavior of solutions to the three-dimensional equations of compressible magnetohydrodynamic flows.''
\textit{Archive for Rational Mechanics and Analysis} 197, no. 1 (2010): 203--238.

\bibitem{huxian2}Hu, X. and Wang, D., ``Global solutions to the three-dimensional full compressible magnetohydrodynamic flows.''
\textit{Communications in Mathematical Physics}
283, no. 1 (2008): 255--284.

\bibitem{huxian3}Hu, X. and Wang, D., ``Low march number limit of viscous compressible magnetohydrodynamic flows.''
\textit{SIAM Journal on Mathematical Analysis} 41, no. 3 (2009): 1272--1294.

\bibitem{inmhd1}Lin, F. and Zhang, P., ``Global small solutions to an MHD-Type system: the three-dimensional case.''
\textit{Communications on Pure and Applied Mathematics}
67, no. 4 (2014): 531--580.

\bibitem{inmhd2}Xu, L. and Zhang, P., ``Global small solutions to three-dimensional incompressible MHD system.'' arXiv preprint arXiv:1304.2869 (2013).

\bibitem{inmhd3}Lin, F., Xu, L. and Zhang, P., ``Global small solutions to 2-D incompressible MHD system.'' arXiv preprint arXiv:1302.5877 (2013).

\bibitem{huxian4}Hu, X., ``Global existence for two dimensional compressible magnetohydrodynamic flows with zero magnetic diffusivity.''
arXiv preprint arXiv:1405.0274 (2014).

\bibitem{mhd2012hoff}Suen, A. and Hoff, D., ``Global low-energy weak solutions of the
equations of three-dimensional compressible magnetohydrodynamics.'' \textit{Archive for Rational Mechanics and Analysis} 205, no. 1 (2012): 27--58.

\bibitem{hoff1995}Hoff, D., ``Global solutions of the Navier-Stokes equations for multidimensional compressible flow with
discontinous initial data.'' \textit{Journal of Differential Equations} 120, no. 1 (1995): 215--254.

\bibitem{hoff1997}Hoff, D., ``Discontinuous solutions of the Navier-Stokes equations for multidimensional flows
of heat-conducting fluids.'' \textit{Archive for Rational Mechanics and Analysis} 139, no. 4 (1997): 303--354.

\bibitem{vacuum1}Li, H., Xu, X. and Zhang, J., ``Global classical solutions to 3D compressible magnetohydrodynamic equations with large oscillations and vacuum.''
\textit{SIAM Journal on Mathematical Analysis} 45, no. 3 (2013): 1356--1387.

\bibitem{vacuum2}Liu, S., Yu, H. and Zhang, J., ``Global weak solutions of 3D compressible MHD with discontinuous initial data and vacuum.'' \textit{Journal of Differential Equations}
254, no. 1 (2013): 229--255.

\bibitem{mhd2011Chengchun}Hao, C., ``Well-posedness to the compressible viscous magnetohydrodynamic system.''
\textit{Nonlinear Analysis: Real World Applications} 12, no. 6 (2011): 2962--2972.

\bibitem{mhd1}Zhang, J. and Zhao, J., ``Some decay estimates of solutions for the 3-D compressible isentropic magnetohydrodynamics.''
\textit{Communications in Mathematical Sciences} 8, no. 4 (2010): 835--850.

\bibitem{mhd2}Li, F. and Yu, H., ``Optimal decay rate of classical solutions to the compressible magnetohydrodynamic equations.''
\textit{Proceedings of the Royal Society of Edinburgh: Section A Mathematics} 141, no. 1 (2011): 109--126.

\bibitem{mhd3}Tan, Z. and Wang, H., ``Optimal decay rates of the compressible magnetohydrodynamic equations.''
\textit{Nonlinear Analysis: Real World Applications} 14, no. 1 (2013): 188--201.

\bibitem{mhd4}Chen, Q. and Tan, Z., ``Global existence and convergence rates of smooth solutions for the compressible magnetohydrodynamic equations.''
\textit{Nonlinear Analysis: Theory, Methods and Applications} 72, no. 12 (2010): 4438--4451.

\bibitem{danchin NS}Danchin, R., ``Global existence in critical spaces for compressible Navier-Stokes equatins.''
\textit{Inventiones Mathematicae} 141, no. 3 (2000): 579--614.

\bibitem{danchin full ns}Danchin, R., ``Global existence in critical space for flows of compressible viscous and heat-conductive gases.''
\textit{Archive for Rational Mechanics and Analysis} 160, no. 1 (2001): 1--39.

\bibitem{localBesov1}Bian, D. and Guo, B., ``Well-posedness in critical spaces for the full compressible MHD equations.''
\textit{Journal of Differential Equations} 33, no. 4 (2013): 1153--1176.

\bibitem{localBesov2}Bian, D. and Yuan, B., ``Local well-posedness in critical spaces for the compressible MHD equations.''
\textit{Applicable Analysis: An International Journal} 2014 (ahead-of-print): 1--31.

\bibitem{jia1}Kawashima, S., \textit{System of a Hyperbolic-Parabolic Composite Type, with Applications
to the Equations of Magnetohydrodynamics}, Department of Mathematics, Kyoto University, Japan, PhD thesis 1984.

\bibitem{LagrangianCompressible}Noboru, C. and Danchin, R., ``On the well-posedness of the full compressible Navier-Stokes system in critical Besov spaces.'' arXiv preprint arXiv:1407.4661 (2014)..

\bibitem{localMiao2010}Chen, Q., Miao, C. and Zhang, Z., ``Well-posedness in critical spaces for the compressible Navier-Stokes
equations with density dependent viscosities.'' \textit{Revista Matem¨¢tica Iberoamericana} 26, no. 3 (2010): 915--946.

\bibitem{blowup12011}Sun, Y., Wang, C. and Zhang, Z., ``A Beale-Kato-Majda blow-up criterion for the 3-D compressible Navier-Stokes equations.''
\textit{Journal de Math\'{e}matiques Pures et Appliqu\'{e}es} 95, no. 1 (2011): 95--113.

\bibitem{blowup22011}Sun, Y., Wang, C. and Zhang, Z., ``A Beale-Kato-Majda criterion for three dimensional sompressible viscous heat-conductive flows.''
\textit{Archive for Rational Mechanics and Analysis} 201, no. 2 (2011): 727--742.

\bibitem{jiamei}Jia, J., Peng, J. and Mei, Z. ``Well-posedness and time-decay for compressible viscoelastic fluids in critical Besov space.''
\textit{Journal of Mathematical Analysis and Applications} 418, no. 2 (2014): 638--675.

\bibitem{danchin local 2001}Danchin, R., ``Local theory in critical spaces for compressible viscous and heat-conductive gases.''
\textit{Communications in Partial Differential Equations} 26, no. 7-8 (2001): 1183--1233.

\end{thebibliography}
\end{document}